\newtheorem{theorem}{Theorem}
\newtheorem{proposition}[theorem]{Proposition}
\newtheorem{lemma}[theorem]{Lemma}
\newtheorem{definition}{Definition}
\newtheorem{corollary}[theorem]{Corollary}
\begin{document}
\title{A novel characterization of cubic Hamiltonian graphs via the
associated quartic graphs}

\author{S. Bonvicini\thanks{Department of Sciences and Methods for Engineering,
University of Modena and Reggio Emilia, Italy,
simona.bonvicini@unimore.it}, T. Pisanski\thanks{University of
Primorska, Koper and University of Ljubljana, Ljubljana, Slovenia,
Tomaz.Pisanski@fmf.uni-lj.si}}

\maketitle

\begin{abstract}
We give a necessary and sufficient condition for a cubic graph to be
Hamiltonian by analyzing Eulerian tours in certain spanning
subgraphs of the quartic graph associated with the cubic graph by
$1$-factor contraction. This correspondence is most useful in the
case when it induces a blue and red $2$-factorization of the
associated quartic graph. We use this condition to characterize the
Hamiltonian $I$-graphs, a further generalization of generalized
Petersen graphs. The characterization of Hamiltonian $I$-graphs
follows from the fact that one can choose a $1$-factor in any
$I$-graph in such a way that the corresponding associated quartic
graph is a graph bundle having a cycle graph as base graph and a
fiber and the fundamental factorization of graph bundles playing the
role of blue and red factorization. The techniques that we develop
allow us to represent Cayley multigraphs of degree $4$, that are
associated to abelian groups, as graph bundles. Moreover, we can
find a family of connected cubic (multi)graphs that contains the
family of connected $I$-graphs as a subfamily.

\vspace{0.2cm}\noindent {\bf Keywords:} generalized Petersen graphs,
$I$-graphs, Hamiltonian cycles,  Eulerian tours, Cayley multigraphs.
\vspace{0.2cm}\noindent {\bf MSC 2000:} 05C45, 05C25, 05C15, 05C76,
05C70, 55R10, 05C60.

\end{abstract}

\section{Introduction.}\label{intro}

A graph is \emph{Hamiltonian} if it contains a spanning cycle
(\emph{Hamiltonian cycle}). To find a Hamiltonian cycle in a graph
is an NP--complete problem (see \cite{GJT}). This fact implies that
a characterization result for Hamiltonian graphs is hard to find.
For this reason, most graph theorists have restricted their
attention to particular classes of graphs.

In this paper we consider cubic graphs. In Section
\ref{sec_transitions} we give a necessary and sufficient condition
for a cubic graph to be Hamiltonian. Using this condition we can
completely characterize the Hamiltonian $I$-graphs.

The family of $I$-graphs is a generalization of the family of
generalized Petersen graphs. In \cite{BCMS}, the generalized
Petersen graphs were further generalized to $I$-graphs. Let $n$,
$p$, $q$ be positive integers, with $n\geq 3$, $1\leq p, q\leq n-1$
and $p, q\neq n/2$. An $I$-graph $I(n, p, q)$ has vertex-set $V(I(n,
p, q))=\{v_i, u_i: 0\leq i\leq n-1\}$ and edge-set $E(I(n, p,
q))=\{[v_i, v_{i+p}], [v_i, u_i], [u_i, u_{i+q}] : 0\leq i\leq
n-1\}$ (subscripts are read modulo $n$). The graph $I(n, p, q)$ is
isomorphic to the graphs $I(n, q, p)$, $I(n, n-p, q)$ and $I(n, p,
n-q)$. It is connected if and only if $\gcd(n, p, q)=1$ (see
\cite{BPZ}).

For $p=1$ the $I$-graph $I(n, 1, q)$ is known as a generalized
Petersen graph and is denoted by $G(n, q)$. The Petersen graph is
$G(5, 2)$. It has been proved that $I(n, p, q)$ is isomorphic to a
generalized Petersen graph if and only if $\gcd(n, p)=1$ or $\gcd(n,
q)=1$ (see \cite{BPZ}). A connected $I$-graph which is not a
generalized Petersen graph is called a \emph{proper} $I$-graph.
Recently, the class of $I$-graphs has been generalized to the class
of GI-graphs (see \cite{MPZ}).

It is well known that the Petersen graph is not Hamiltonian. A
characterization of Hamiltonian generalized Petersen graphs was
obtained by Alspach \cite{Al}.

\begin{theorem}[Alspach, \cite{Al}]\label{th_Alspach}
A generalized Petersen graph $G(n, q)$ is Hamiltonian if and only if
it is not isomorphic to $G(n, 2)$ when $n\equiv 5\pmod 6$.
\end{theorem}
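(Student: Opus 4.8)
The plan is to prove the two implications separately, treating the classification as (i) a single non-Hamiltonicity obstruction and (ii) a construction of Hamiltonian cycles in every remaining case. Since Hamiltonicity is invariant under isomorphism, I would first exploit the symmetries of generalized Petersen graphs to shrink the parameter space. From the isomorphisms recorded above (and in \cite{BPZ}) one has $G(n,q)=I(n,1,q)\cong I(n,1,n-q)=G(n,n-q)$, and, combining $I(n,1,q)\cong I(n,q,1)$ with a cyclic relabelling $i\mapsto q^{-1}i \pmod n$ when $\gcd(n,q)=1$, also $G(n,q)\cong G(n,q^{-1})$. These reduce the analysis to a canonical representative of each class; in particular the prism $G(n,1)$ is trivially Hamiltonian (traverse one rim, cross a spoke, return along the other rim and close through a second spoke), so the substantive work concerns $q=2$ and $q\ge 3$.

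For the positive direction I would build Hamiltonian cycles explicitly by a \emph{template} method. Viewing $G(n,q)$ as $n$ columns indexed by $\mathbb{Z}_n$, each column $i$ carrying $v_i,u_i$, the spoke $v_iu_i$, the rim edge $v_iv_{i+1}$ and the inner edge $u_iu_{i+q}$, I would partition $\mathbb{Z}_n$ into consecutive blocks and prescribe in each block a fixed local pattern of rim, spoke and inner edges forming a path, chosen so that consecutive patterns concatenate and the whole closes into a single spanning cycle. The admissible patterns and the block length depend on $q$ and on the residue of $n$ modulo $6$; the cases $n\equiv 0,1,2,3,4\pmod 6$ for $q=2$, and the generic $q\ge 3$ cases, are handled by separate but routine templates, with finitely many small values of $n$ checked directly. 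The verification in each case is a mechanical check that the prescribed edge set meets every vertex exactly twice and is connected.

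The heart of the proof is the negative direction: $G(n,2)$ has no Hamiltonian cycle when $n\equiv 5\pmod 6$. Here I would argue by contradiction. Given a putative Hamiltonian cycle $H$, its complement in the cubic graph is a perfect matching, and I would encode $H$ by the set $S=\{i : v_iu_i\in H\}$ of spoke columns. A degree count at the outer and at the inner vertices shows that $|S|$ is even and that $H$ decomposes into $|S|/2$ \emph{outer arcs}, each a maximal run of rim edges, and $|S|/2$ \emph{inner arcs}, each a maximal run of inner edges; because $n$ is odd the inner vertices form a single $n$-cycle $u_0,u_2,u_4,\dots$, so inner arcs are exactly the runs consecutive in this ordering. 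Traversing $H$, outer and inner arcs alternate and are joined by the spokes at the columns of $S$. The obstruction comes from the interaction of the two cyclic orders (step $1$ on the rim, step $2$ on the inner cycle) with the requirement that the alternating linkage close up into a \emph{single} cycle: tracking the lengths of the arcs yields a congruence modulo $3$ that the total $n$ must satisfy, and this congruence excludes precisely $n\equiv 5\pmod 6$.

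Pinning down that congruence is the main obstacle, and it is where the case analysis becomes genuinely delicate rather than routine: one must show that no assignment of arc lengths, subject to the local constraints at the $S$-columns and to the single-cycle condition, can sum to an $n$ with $n\equiv 5\pmod 6$. The cleanest route I would attempt is to exhibit a $3$-colouring or parity invariant of the columns that is preserved along each arc type and whose global balance around $H$ forces $n\not\equiv 5\pmod 6$; the Petersen graph $G(5,2)$ serves as the base case, and the general odd case propagates the same invariant. Finally I would remark that the positive templates above are exactly the kind of alternating rim/inner structure that the quartic-graph and Eulerian-tour framework of Section~\ref{sec_transitions} is designed to certify, so that machinery offers an alternative, less case-bound derivation of the same classification.
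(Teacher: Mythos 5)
Note first that the paper does not actually prove this statement: Theorem~\ref{th_Alspach} is quoted from Alspach \cite{Al} as an external input. The closest the paper comes to an internal derivation is Proposition~\ref{pro_good_GP}, which re-proves the hard (negative) direction inside its own framework --- a Hamiltonian cycle in $G(n,2)$ corresponds to a good Eulerian subgraph of the associated quartic graph $X(6m+5,1,2)$, and a short transition analysis (sketched for $m=0$) shows no such subgraph exists --- together with the remark after Proposition~\ref{pro_good_t=1} that the positive direction could likewise be obtained by constructing good Eulerian subgraphs $W(s,1,r)$ directly, a construction the authors deliberately omit because they may invoke \cite{Al}. So your proposal should be measured against Alspach's original lattice-diagram proof and against Proposition~\ref{pro_good_GP}, and against both it has a genuine gap at exactly the critical point.

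The gap is in your negative direction. You reduce a putative Hamiltonian cycle in $G(n,2)$, $n\equiv 5\pmod 6$, to an alternating sequence of outer and inner arcs glued by spokes (this part is fine, and is essentially how Alspach's lattice diagrams begin), but the conclusion --- ``tracking the lengths of the arcs yields a congruence modulo $3$'' that excludes $n\equiv 5\pmod 6$ --- is asserted, not derived, and your proposed shortcut of ``a $3$-colouring or parity invariant \dots propagated from the Petersen graph'' does not exist in any simple form. A cautionary fact: every generalized Petersen graph except $G(5,2)$ is $3$-edge-colorable (Castagna and Prins), so no colouring-type obstruction inherited from the Petersen graph can rule out Hamiltonicity for all $n\equiv 5\pmod 6$; the non-Hamiltonicity of $G(11,2)$, $G(17,2)$, \dots is a strictly finer phenomenon than that of $G(5,2)$. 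What is actually required is a careful enumeration of the admissible local arc-length patterns at spoke columns together with the single-cycle closure condition --- this is the bulk of Alspach's paper, and in the present paper it is compressed into the transition bookkeeping of Proposition~\ref{pro_good_GP}, where the constraint that every $4$-valent vertex of a good Eulerian subgraph use only non-traversing transitions forces a short non-spanning closed component. Your proposal names the right combinatorial objects but leaves this counting argument as a placeholder, which you yourself flag as ``the main obstacle''; as written it is therefore not a proof. The positive direction has the same (lesser) defect: the ``routine templates'' for each residue class of $n$ and each $q$ are never exhibited, though that part is standard and could be completed mechanically, or replaced --- as the paper suggests --- by explicit good Eulerian subgraphs $W(s,1,r)$ in the quartic framework.
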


In this paper we develop a powerful theory that helps us extend this
result to all $I$-graphs.

\begin{theorem}\label{th_hamiltonian}
A connected $I$-graph is Hamiltonian if and only if it is not
isomorphic to $G(n, 2)$ when $n\equiv 5\pmod 6$.
\end{theorem}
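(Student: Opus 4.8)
The plan is to combine the structural machinery advertised in the abstract with Alspach's Theorem~\ref{th_Alspach}. First I would fix in $I(n,p,q)$ the \emph{spoke} $1$-factor $F=\{[v_i,u_i]:0\le i\le n-1\}$ and form the associated quartic graph $Q=I(n,p,q)/F$. Contracting each spoke to a single vertex $w_i$ identifies $Q$ with the Cayley multigraph $\mathrm{Cay}(\mathbb{Z}_n;\{\pm p,\pm q\})$, in which the \emph{blue} edges $w_iw_{i\pm p}$ come from the outer edges $[v_i,v_{i+p}]$ and the \emph{red} edges $w_iw_{i\pm q}$ come from the inner edges $[u_i,u_{i+q}]$. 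This is exactly the situation in which the necessary and sufficient condition of Section~\ref{sec_transitions} applies: Hamiltonicity of $I(n,p,q)$ is equivalent to the existence in $Q$ of a spanning connected Eulerian subgraph whose transition system at each $w_i$ is compatible with the blue/red colouring (at a degree-$4$ vertex the two blue edges are paired together and the two red edges are paired together, meaning the spoke is not used, while at a degree-$2$ vertex one blue edge is paired with one red edge, meaning the spoke is used) and whose trace is a single closed curve.

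Next I would make the graph-bundle structure explicit. Writing $a=\gcd(n,p)$ and $b=\gcd(n,q)$, connectivity gives $\gcd(a,b)=\gcd(n,p,q)=1$. The red edges partition $V(Q)$ into the $b$ cosets of $\langle q\rangle=b\,\mathbb{Z}_n$, each inducing a fibre cycle $C_{n/b}$; since $\gcd(p,b)=1$, translation by $p$ permutes these $b$ cosets in a single $b$-cycle, so the blue edges realise $Q$ as a graph bundle over the base cycle $C_b$ with fibre $C_{n/b}$, the blue/red factorization being precisely the fundamental factorization into base and fibre edges. This is the representation promised in the introduction, and it is what turns the abstract Eulerian condition into a finite, bundle-indexed bookkeeping problem.

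With the set-up in place I would split into two cases using the classification facts recalled in the introduction. If $\gcd(n,p)=1$ or $\gcd(n,q)=1$, then $I(n,p,q)$ is isomorphic to a generalized Petersen graph, and Alspach's Theorem~\ref{th_Alspach}, together with the isomorphisms $I(n,p,q)\cong I(n,q,p)\cong I(n,n-p,q)\cong I(n,p,n-q)$, settles Hamiltonicity and produces exactly the exceptions $G(n,2)$ with $n\equiv5\pmod 6$. If instead $I(n,p,q)$ is a \emph{proper} $I$-graph, then $a,b\ge 2$; since the exceptional graphs $G(n,2)$ are generalized Petersen graphs and hence never proper, the statement reduces to showing that \emph{every} connected proper $I$-graph is Hamiltonian. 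For this I would construct, inside the bundle $Q$, an explicit compatible Eulerian subgraph: using the base cycle $C_b$ to move between fibres and switching (that is, using a spoke) at a controlled set of indices within each fibre, one assembles a closed curve that visits every $w_i$. The freedom coming from having $b\ge 2$ fibres and $a\ge 2$ blue cycles, combined with $\gcd(a,b)=1$, is what makes such a construction always available.

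The main obstacle is the \emph{single-cycle} (connectivity) requirement: a compatible Eulerian subgraph always covers every vertex, but a priori its trace may split into several disjoint closed curves, each of which would lift to a separate cycle rather than to one Hamiltonian cycle. Controlling this amounts to following the transitions around the base cycle and checking that the resulting permutation of the fibre is a single $(n/b)$-cycle, i.e.\ a number-theoretic condition on how the chosen switches compose modulo $n$. I expect this permutation and cycle-counting analysis to be the technical heart of the argument: for proper $I$-graphs the two coprime parameters $a,b\ge 2$ give enough room to force a single cycle, whereas in the generalized Petersen case $a=1$ (or $b=1$) the rigidity of the construction reproduces precisely Alspach's obstruction and the family $G(n,2)$ with $n\equiv 5\pmod 6$.
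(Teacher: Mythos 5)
Your set-up is exactly the paper's: the spoke $1$-factor, the contraction to the circulant multigraph $Cir(n;\pm p,\pm q)$, the blue/red $2$-factorization with traversing transitions color-switching (Theorem~\ref{th_red_blue_factorization}, Proposition~\ref{pro_admissible}), the bundle representation $X(s,t,r)$ with $t=\gcd(n,q)$, $s=n/t$ (Lemma~\ref{pro_circulant2}), and the reduction of Hamiltonicity to the existence of a good Eulerian subgraph (Theorem~\ref{th_eulerian_subgraphs}, Corollary~\ref{cor_eulerian_subgraphs}). The split into the generalized Petersen case (settled by Alspach) and the proper case is also the paper's. But there is a genuine gap at precisely the point you yourself flag as ``the technical heart'': you never actually produce the good Eulerian subgraphs for proper $I$-graphs. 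Your claim that ``the two coprime parameters $a,b\ge 2$ give enough room to force a single cycle'' is a hope, not an argument, and it is not even true at the level of generality you state it: the graph $X(s,2,0)$ with $s$ odd has \emph{no} good Eulerian subgraph (parallel vertical edges obstruct it; see Case~4 of Lemma~\ref{pro_construction_goodEuleriansubgraph}), so existence cannot follow from coprimality and $a,b\ge 2$ alone. The paper must separately verify that the excluded configuration $(t,r)=(2,0)$ with $s$ odd never arises from a connected $I$-graph, because $r(q/t)\equiv p\pmod s$ would force $p\equiv 0$ (Theorem~\ref{pro_gcd_1}); your proposal, which asserts such a construction is ``always available,'' would silently prove a false statement if that exceptional case were not ruled out on the $I$-graph side.

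Moreover, your suggested mechanism for the connectivity requirement --- following transitions once around the base cycle and checking that the induced permutation of the fibre is a single $(n/b)$-cycle --- does not match how good Eulerian subgraphs actually behave: a tour with allowed transitions traverses $4$-valent vertices straight ahead and switches at $2$-valent ones, so its components are straight-ahead walks in a sub-quartic spanning subgraph, not orbits of a single fibre permutation, and no clean number-theoretic single-cycle criterion is available. The paper instead proves Lemma~\ref{pro_construction_goodEuleriansubgraph} by a five-case parity analysis on $(s,t)$ with explicit base configurations (Figures~\ref{fig_pro1}--\ref{fig_733}, plus the whole $t=3$ appendix), together with the vertical and horizontal expansion lemmas (Lemmas~\ref{lemma_vertical} and~\ref{lemma_horizontal2}) that propagate goodness in steps of $2$, and auxiliary reductions via the isomorphism results (Propositions~\ref{pro_adam_iso2}, \ref{pro_good_gcd_sr}). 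To turn your proposal into a proof you would need either to reproduce that construction-plus-induction apparatus or to supply a genuinely different existence argument for good Eulerian SAWs in all admissible $X(s,t,r)$; as written, the proposal reduces the theorem to an unproved (and, without the $(2,0)$ exclusion, false) existence claim.
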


For the proof the above main theorem, we developed techniques that
are of interest by themselves and are presented in the following
sections. In particular, we introduce \emph{good Eulerian graphs}
that are similar to \emph{lattice diagrams} that were originally
used by Alspach in his proof of Theorem \ref{th_Alspach}.

Our theory also involves Cayley multigraphs. In Section
\ref{sec_graphs_Xstr} we show that Cayley multigraphs of degree $4$,
that are associated to abelian groups, can be represented as graph
bundles \cite{PV}. By the results concerning the isomorphisms
between Cayley multigraphs (see \cite{DeFaMa}), we can establish
when two graph bundles are isomorphic or not (see Section
\ref{sec_isoX}). Combining the definition of graph bundles with
Theorem \ref{th_cubic_quartic}, we can find a family of connected
cubic (multi)graphs that contains the family of connected $I$-graphs
as a subfamily (see Section \ref{sec_generalizing_Igraphs}).

\section{Cubic graph with a $1$-factor and the associated quartic graph with
transitions.}\label{sec_transitions}

A cubic Hamiltonian graph has a $1$-factor. In fact, it has at least
three (edge-disjoint) $1$-factors. Namely any Hamiltonian cycle is
even and thus gives rise to two $1$-factors and the remaining chords
constitute the third $1$-factor. The converse is not true. There are
cubic graphs, like the Petersen graph, that have a $1$-factor but
are not Hamiltonian. Nevertheless, we may restrict our search for
Hamiltonian graphs among the cubic graphs to the ones that possess a
$1$-factor. In this section, we give a necessary an sufficient
condition for the existence of a Hamiltonian cycle in a cubic graph
$G$ possessing a $1$-factor $F$.

\smallskip

Let $G$ be a connected cubic graph and let $F$ be one of its
$1$-factors. Denote by $X=G/F$ the graph obtained from $G$ by
contracting the edges of $F$. The graph $X$ is connected quartic,
i.e. regular of degree $4$ and might have multiple edges. We say
that the quartic graph $X$ is \emph{associated with $G$ and $F$}.
Since $X$ is even and connected, it is Eulerian. A path on three
vertices with middle vertex $v$ that is a subgraph of $X$ is called
a \emph{transition at} $v$. Since any pair of edges incident at $v$
defines a transition, there are $\binom 42=6$ transitions at each
vertex of $X$. For general graphs each vertex of valence $d$ gives
rise to $\binom d2$ transitions. In an Euler tour some transitions
may be used, others are not used. We are interested in some
particular Eulerian spanning subgraphs $W$. Note that any such graph
is sub-quartic and the valence at any vertex of $W$ is either $4$ or
$2$. A vertex of valence $4$ has therefore $6$ transitions, while
each vertex of valence $2$ has $\binom 22=1$ transition. Let $Y$ be
the complementary $2$-factor of $F$ in $G$. Note that the edges of
$Y$ are in one-to-one correspondence with the edges of $X$, while
the edges of $F$ are in one-to-one correspondence with the vertices
of $X$. If $a$ is an edge of $Y$, we denote by $a'$ the
corresponding edge in $X$. If $e$ is an edge of $F$ the
corresponding vertex of $X$ will be denoted by $x_e$. Let $u$ and
$v$ be the end-vertices of $e$ and let $a$ and $b$ be the other
edges incident with $u$ and similarly $c$ and $d$ the edges incident
with $v$. After contraction of $e$, the vertex $x_e$ is incident
with four edges: $a'$, $b'$, $c'$, $d'$. By considering the pre
images of the six transitions at $x_e$, they fall into two disjoint
classes. Transitions $a'b'$ and $c'd'$ are \emph{non-traversing}
while the other four transitions are \emph{traversing transitions}.
In the latter case the edge $e$ has to be used to traverse from one
edge of the pre image transition to the other.

Let $W$ be a spanning Eulerian sub-quartic subgraph of $X$.
Transitions of $X$ carry over $W$. $4$-valent vertices of $W$ keep
the same six transitions, while each $2$-valent vertex inherits a
single transition. We say that $W$ is \emph{admissible} if the
transition at each $2$-valent vertex of $W$ is traversing.

Let $W$ be an admissible subgraph of $X$. A tour in $W$ that allows
only non-traversing transitions at each $4$-valent vertex of $W$ is
said to be a \emph{tour with allowed transitions}. Note that a tour
with allowed transitions uses traversing transitions at each
$2$-valent vertex of $W$. (We recall that a tour in a graph is a
closed walk that traverses each edge of $G$ at least once
\cite{BonMur}). A tour with allowed transitions might have more than
one component.

\begin{lemma}\label{lemma_2factor}
Let $G$ be a connected cubic graph with $1$-factor $F$. There is a
one-to-one correspondence between $2$-factors $T$ of $G$ and
admissible Eulerian subgraphs $W$ of $X=G/F$ in such a way that the
number of cycles of $T$ is the same as the number of components of a
tour with allowed transitions in $W$.
\end{lemma}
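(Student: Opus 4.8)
The plan is to set up the bijection explicitly by translating structures on $G$ into structures on $X=G/F$ and back, and then to verify that the translation respects the relevant counting. Recall that edges of the complementary $2$-factor $Y$ correspond bijectively to edges of $X$ (via $a\mapsto a'$), and edges of $F$ correspond to vertices of $X$ (via $e\mapsto x_e$). The natural map is the following. Given a $2$-factor $T$ of $G$, each vertex of $G$ has degree $2$ in $T$; since $G$ is cubic and $F$ is a $1$-factor, exactly one edge of $F$ is missing from $T$ at each vertex unless the $F$-edge at that vertex lies in $T$. The key observation is that for each edge $e=uv$ of $F$, either $e\in T$ or $e\notin T$, and this dichotomy is exactly what distinguishes traversing from non-traversing behavior at $x_e$. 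I would define $W$ to be the subgraph of $X$ whose edges are $\{a' : a\in T\cap Y\}$, i.e. the images of the $Y$-edges used by $T$, and then check that $W$ is spanning, Eulerian, sub-quartic, and admissible.

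The main steps I would carry out are as follows. First I would compute the valence of $x_e$ in $W$: the four edges $a',b',c',d'$ incident with $x_e$ come from the four $Y$-edges $a,b$ at $u$ and $c,d$ at $v$. If $e\in T$, then $T$ uses exactly one of $\{a,b\}$ and exactly one of $\{c,d\}$ at the two endpoints, so $x_e$ is $2$-valent in $W$, and moreover the single surviving transition at $x_e$ pairs one edge from $\{a',b'\}$ with one from $\{c',d'\}$, which is precisely a traversing transition — giving admissibility. If $e\notin T$, then $T$ must use both edges at each of $u$ and $v$ (to reach degree $2$ without $e$), so all four of $a',b',c',d'$ survive and $x_e$ is $4$-valent in $W$. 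In either case the valence is even, so $W$ is Eulerian, and every vertex $x_e$ has positive even valence so $W$ is spanning. This establishes that the map $T\mapsto W$ lands in the set of admissible Eulerian subgraphs.

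Next I would construct the inverse. Given an admissible Eulerian subgraph $W$, define $T$ to consist of all $Y$-edges $a$ with $a'\in E(W)$, together with all $F$-edges $e$ for which $x_e$ is $2$-valent in $W$. I would then verify that $T$ is a $2$-factor, i.e. that every vertex of $G$ has degree exactly $2$ in $T$, by examining the two endpoints $u,v$ of each edge $e$ and using the valence computation above in reverse; admissibility is what guarantees that when $x_e$ is $2$-valent the two surviving $Y$-edges sit on opposite sides of $e$ (one at $u$, one at $v$), so that including $e$ yields degree $2$ at both $u$ and $v$. The two assignments $T\mapsto W$ and $W\mapsto T$ are then checked to be mutually inverse, establishing the bijection.

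The remaining point, and the step I expect to require the most care, is the cycle-count correspondence. Here I would argue that cycles of $T$ in $G$ correspond to components of a tour with allowed transitions in $W$. The allowed (non-traversing) transitions at a $4$-valent vertex $x_e$ are exactly $a'b'$ and $c'd'$, which in $G$ correspond to passing through $u$ and through $v$ along $T$ without using the edge $e\notin T$; the forced traversing transition at a $2$-valent $x_e$ corresponds to passing along $T$ through the edge $e\in T$. Thus a closed walk in $W$ respecting allowed transitions lifts edge-by-edge to a closed walk in $T$ and vice versa, with traversing transitions exactly recording the traversals of $F$-edges lying in $T$. I would conclude that the decomposition of $W$ into components of a tour with allowed transitions matches the decomposition of $T$ into its cycles, so the two numbers agree. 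The delicate part is bookkeeping the local pairing of half-edges at each $x_e$ to be sure the lift of a walk is well-defined and that no two distinct cycles of $T$ collapse into one component of the tour or split apart; making this half-edge correspondence precise is where the real content of the lemma lies.
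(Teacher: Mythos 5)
Your proposal is correct and follows essentially the same route as the paper: the paper's proof projects $T$ to $W$, performs exactly your two-case valence analysis at each $x_e$ ($e\in T$ gives a $2$-valent vertex with traversing transition, $e\notin T$ a $4$-valent vertex with non-traversing transitions), and then asserts the inverse correspondence and the matching of tour components with cycles of $T$. The only difference is that you spell out the inverse map and the walk-lifting bookkeeping that the paper leaves implicit, which is a faithful elaboration rather than a different argument.
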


\begin{proof} Let $T$ be a $2$-factor of $G$ and let $e=uv$ be an edge
of the $1$-factor $F$. Let $W$ be the projection of $T$ to $X=G/F$.
We will use the notation introduced above. Hence the edge $e$ and
its end-vertices $u$ and $v$ project to the same vertex $x_e$ of
$X$. There are two cases:

Case $1$: $e$ belongs to $T$. In this case exactly one other edge,
say $a$, incident with $u$ and another edge, say $c$, incident with
$v$ belong to $T$. The other two edges ($b$ and $d$) do not belong
to $T$. This means that $x_e$ is a $2$-valent vertex with traversing
transition.

Case $2$: $e$ does not belong to $T$. In this case both edges $a$
and $b$ incident with $u$ belong to $T$ and both edges $c$ and $d$
incident with $v$ belong to $T$. In this case $x_e$ is a $4$-valent
vertex with non-traversing transitions.

Clearly, $W$ is an admissible Eulerian subgraph. Each component of
the tour determined by $W$ with transitions gives back a cycle of
$T$. The correspondence between $T$ and $W$ is therefore
established.\end{proof}


An Eulerian tour in $W$ with allowed transitions is said to be
\emph{good}. An admissible subgraph $W$ of $X$ possessing a good
Eulerian tour is said to be a \emph{good Eulerian subgraph}. In a
good Eulerian subgraph $W$ there are two extreme cases:
\begin{enumerate}
\item each vertex of $W$ is $4$-valent: this means that $W=X$; in
this case the complementary $2$-factor $Y=G-F$ is a Hamiltonian
cycle and no edge of $F$ is used;

\item each vertex of $W$ is $2$-valent: this means that $W$ is a
good Hamiltonian cycle in $X$. In this case $F$ together with the
pre images of edges of $W$ in $G$ form a Hamiltonian cycle.
\end{enumerate}

\begin{theorem}\label{th_eulerian_subgraphs}
Let $G$ be a connected cubic graph with $1$-factor $F$. Then $G$ is
Hamiltonian if and only if $X=G/F$ contains a good Eulerian subgraph
$W$.
\end{theorem}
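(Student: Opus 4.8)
The plan is to reduce the theorem to the correspondence already established in Lemma~\ref{lemma_2factor}. The key observation is that a Hamiltonian cycle in a cubic graph is simply a connected $2$-factor, that is, a $2$-factor consisting of a single cycle. Thus Hamiltonicity of $G$ is equivalent to the existence of a $2$-factor $T$ of $G$ whose number of cycles equals one.

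First I would recall that Lemma~\ref{lemma_2factor} gives a bijection between $2$-factors $T$ of $G$ and admissible Eulerian subgraphs $W$ of $X=G/F$, and crucially that this bijection preserves the relevant counting invariant: the number of cycles of $T$ equals the number of components of a tour with allowed transitions in $W$. A good Eulerian subgraph is precisely an admissible subgraph $W$ whose tour with allowed transitions has a single component, i.e.\ admits a good (connected) Eulerian tour. So the plan is to chase the equivalences: $G$ is Hamiltonian $\iff$ $G$ has a $2$-factor $T$ with exactly one cycle $\iff$ the corresponding admissible subgraph $W$ has a tour with allowed transitions consisting of exactly one component $\iff$ $W$ is a good Eulerian subgraph.

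For the forward direction, I would start from a Hamiltonian cycle $H$ in $G$, note that $H$ is a connected $2$-factor (one cycle), apply the lemma to obtain the associated admissible $W$, and conclude from the cycle-count equality that the tour with allowed transitions in $W$ has one component, so $W$ is good. For the converse, I would start from a good Eulerian subgraph $W$, take its preimage $2$-factor $T$ under the lemma's correspondence, and use the single-component property of its good Eulerian tour to deduce that $T$ has exactly one cycle, hence $T$ is a Hamiltonian cycle of $G$. The two extreme cases described before the theorem statement (where $W=X$ and where $W$ is a Hamiltonian cycle of $X$) illustrate how the preimage is reconstructed in $G$ using edges of $F$ and of $Y$, but the general argument needs only the count-preserving bijection.

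I do not expect a serious obstacle here, since the substantive combinatorial work is already carried out in Lemma~\ref{lemma_2factor}; the theorem is essentially the specialization of that lemma to the case of a single cycle/component. The only point requiring care is to state explicitly that ``Hamiltonian cycle'' and ``connected $2$-factor'' coincide for the purposes of the correspondence, and to confirm that the notion of \emph{good} (a connected good Eulerian tour) matches exactly the one-component case of the lemma. Once these identifications are made, the theorem follows directly by reading the lemma in both directions.
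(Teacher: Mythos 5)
Your proposal is correct and follows essentially the same route as the paper's own proof: both reduce Hamiltonicity to the existence of a $2$-factor with a single cycle and then invoke the count-preserving correspondence of Lemma~\ref{lemma_2factor} to identify this with a good Eulerian subgraph $W$ of $X=G/F$. Your version merely spells out the equivalence chain and the identification of ``good'' with the one-component case in more detail than the paper's terser argument.
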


\begin{proof} Clearly $G$ is Hamiltonian if and only if it contains
a $2$-factor with a single cycle. By Lemma \ref{lemma_2factor}, this
is true if and only if $W$ is an admissible Eulerian subgraph
possessing an Eulerian tour with allowed transitions. But this means
$W$ is good.\end{proof}

\begin{corollary}\label{cor_NP}
Let $G$ be a connected cubic graph with $1$-factor $F$. Finding a
good Eulerian subgraph $W$ of $X=G/F$ is NP-complete.
\end{corollary}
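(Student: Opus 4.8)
The plan is to recast the task as the decision problem ``given a connected cubic graph $G$ with a prescribed $1$-factor $F$, does the associated quartic graph $X=G/F$ contain a good Eulerian subgraph?'' and to show this problem both lies in NP and is NP-hard, the latter exploiting the equivalence supplied by Theorem~\ref{th_eulerian_subgraphs}.

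First, for membership in NP, I would take as a certificate a spanning sub-quartic subgraph $W$ of $X$ together with a single closed walk in $W$. In polynomial time one checks that $W$ is Eulerian, that every $2$-valent vertex of $W$ carries a traversing transition (so that $W$ is admissible), and that the supplied walk is an Eulerian tour of $W$ using only non-traversing transitions at each $4$-valent vertex. Since each vertex has at most $\binom{4}{2}=6$ transitions, all of these conditions are verifiable in time polynomial in the size of $X$. Bundling the tour into the certificate is what keeps ``goodness'' succinctly witnessable, rather than leaving it as an existential statement to be searched for.

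Next, for NP-hardness, I would reduce from the Hamiltonian cycle problem for connected cubic graphs, which is NP-complete \cite{GJT}. Given such a cubic graph $G$, I would first run Edmonds' matching algorithm to decide in polynomial time whether $G$ admits a $1$-factor. If it does not, then $G$ cannot be Hamiltonian (a Hamiltonian cycle in a cubic graph already yields a $1$-factor), and the reduction outputs a fixed negative instance. If it does, the algorithm returns a $1$-factor $F$, and the reduction outputs the pair $(G,F)$, equivalently the quartic graph $X=G/F$. By Theorem~\ref{th_eulerian_subgraphs}, $X$ possesses a good Eulerian subgraph if and only if $G$ is Hamiltonian, so this is a valid polynomial-time many-one reduction. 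Combined with membership in NP, the problem is NP-complete.

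I expect no genuine obstacle here: the construction is immediate once Theorem~\ref{th_eulerian_subgraphs} is in hand, so the reduction carries essentially no computational content beyond producing the $1$-factor. The only points that require care are the two places where the problem might otherwise escape the class NP or the reduction might fail to be polynomial, namely certifying ``goodness'' succinctly (handled by including the Eulerian tour in the witness) and disposing of cubic inputs without a $1$-factor (handled by the polynomial-time matching test). The difficulty has already been absorbed into Theorem~\ref{th_eulerian_subgraphs}, so the corollary is really a transfer of the known hardness of cubic Hamiltonicity through that equivalence.
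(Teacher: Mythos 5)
Your proposal is correct and takes essentially the same route as the paper, whose proof is a one-line appeal to the equivalence of Theorem~\ref{th_eulerian_subgraphs} together with the NP-completeness of the Hamiltonian cycle problem for cubic graphs \cite{GJT}. The details you add---the certificate consisting of $W$ plus an Eulerian tour for NP-membership, and Edmonds' matching algorithm to produce the $1$-factor (or a fixed negative instance) in the reduction---are exactly the routine steps the paper leaves implicit, so there is no substantive difference in approach.
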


\begin{proof} Since finding a good Eulerian subgraph is equivalent
to finding a Hamiltonian cycle in a cubic graph, and the latter is
NP-complete \cite{GJT}, the result follows readily.\end{proof}


Also in \cite{Fl} Eulerian graphs are used to find a Hamiltonian
cycle (and other graph properties), but our method is different.

The results of this section may be applied to connected $I$-graphs.
The obvious 1-factor $F$ of an $I$-graph $I(n,p,q)$ consists of
spokes. Let $Q(n,p,q)$ denote the quotient $I(n,p,q)/F$. We will
call $Q(n,p,q)$ the \emph{quartic graph associated} with $I(n,p,q)$.

\begin{corollary}\label{cor_eulerian_subgraphs}
Let $I(n,p,q)$ be a connected $I$-graph and let $Q(n,p,q)$  be its
associated quartic graph. Then $I(n,p,q)$ is Hamiltonian if and only
if $Q(n,p,q)$ contains a good Eulerian subgraph $W$.
\end{corollary}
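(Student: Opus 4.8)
The plan is to obtain this statement as an immediate specialization of Theorem~\ref{th_eulerian_subgraphs}. That general theorem already equates Hamiltonicity of a connected cubic graph $G$ with the existence of a good Eulerian subgraph of $X=G/F$, for any chosen $1$-factor $F$. Thus the only work is to check that an $I$-graph, together with its spoke set, satisfies all the hypotheses of that theorem, and that the quotient $Q(n,p,q)$ coincides with the graph $X$ appearing there.

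First I would verify that $I(n,p,q)$ is a connected cubic graph. Connectedness is part of the hypothesis (equivalently $\gcd(n,p,q)=1$). For the degree count, each outer vertex $v_i$ is incident with the two rim edges $[v_i,v_{i+p}]$, $[v_{i-p},v_i]$ and the single spoke $[v_i,u_i]$, while each inner vertex $u_i$ is incident with the two inner edges $[u_i,u_{i+q}]$, $[u_{i-q},u_i]$ and the spoke $[v_i,u_i]$; hence every vertex has degree $3$. Here one should note the mild subtlety that the standing conditions $p,q\neq n/2$ (together with $1\le p,q\le n-1$) guarantee that the rim and inner edges are genuine and distinct, so that $I(n,p,q)$ is honestly a simple cubic graph; the three edge classes (rim, inner, spoke) are automatically pairwise disjoint since they join $v$'s to $v$'s, $u$'s to $u$'s, and $v$'s to $u$'s respectively.

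Next I would confirm that the set of spokes $F=\{[v_i,u_i]:0\le i\le n-1\}$ is a $1$-factor: each vertex $v_i$ and each vertex $u_i$ lies on exactly one spoke, so $F$ is a perfect matching. Contracting $F$ merges each pair $v_i,u_i$ into a single vertex, and the resulting connected quartic graph is by definition $Q(n,p,q)=I(n,p,q)/F$, which is precisely the graph $X$ of Theorem~\ref{th_eulerian_subgraphs}. With all hypotheses met, the theorem applies verbatim and yields the claimed equivalence.

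I do not expect any genuine obstacle here, since the statement is a routine instantiation. The only point requiring a moment's care is the bookkeeping that the spokes really form a $1$-factor and that no vertex is left uncovered or doubly covered, but this is immediate from the vertex labelling. The substance of the result lives entirely in Theorem~\ref{th_eulerian_subgraphs}; this corollary merely records that connected $I$-graphs, via their canonical spoke $1$-factor, are a legitimate input to that machinery, thereby setting up the subsequent structural analysis of $Q(n,p,q)$.
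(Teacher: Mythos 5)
Your proposal is correct and matches the paper exactly: the paper states this corollary without proof, treating it precisely as you do, namely as an immediate instantiation of Theorem~\ref{th_eulerian_subgraphs} with $G=I(n,p,q)$, $F$ the spoke $1$-factor, and $X=Q(n,p,q)=I(n,p,q)/F$. Your additional bookkeeping (cubicity, the spokes forming a perfect matching, connectedness via $\gcd(n,p,q)=1$) is sound and simply makes explicit what the paper leaves implicit.
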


\section{Special $1$-factors and their applications.}\label{sec_special}

Let $G$ be a cubic graph, $F$ a $1$-factor and $Y$ the complementary
$2$-factor of $F$ in $G$. Define an auxiliary graph $Y(G, F)$ having
cycles of $Y$ as vertices and having two vertices adjacent if and
only if the corresponding cycles of $Y$ are joined by one ore more
edges of $F$. If an edge of $F$ is a chord in one of the cycles of
$Y$, then the graph $Y(G, F)$ has a loop. We shall say that the
$1$-factor $F$ is \emph{special} if the graph $Y(G, F)$ is
bipartite. A cubic graph with a special $1$-factor will be called
\emph{special}. If $F$ is a special $1$-factor of $G$, then the
edges of $F$ join vertices belonging to distinct cycles of $Y$,
since $Y(G, F)$ is loopless.

\begin{theorem}\label{th_red_blue_factorization}
Let $G$ be a connected cubic graph and let $F$ be one of its
$1$-factors and $X=G/F$ its associated quartic graph. Then $X$
admits a $2$-factorization into a blue and red $2$-factor in such a
way that the traversing transitions are exactly color-switching and
non-traversing transitions are color-preserving if and only if $G$
and $F$ are special.
\end{theorem}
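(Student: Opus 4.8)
The plan is to translate the edge-coloring condition on $X$ into a vertex-coloring condition on the auxiliary graph $Y(G,F)$, and then read off the equivalence directly. Throughout I keep the notation of the preceding discussion: for an $F$-edge $e=uv$, the vertex $x_e$ of $X$ carries four edges, namely $a',b'$ (the images of the two $Y$-edges at $u$) and $c',d'$ (the images of the two $Y$-edges at $v$); the non-traversing transitions at $x_e$ are $a'b'$ and $c'd'$, and the remaining four ($a'c'$, $a'd'$, $b'c'$, $b'd'$) are traversing. A blue/red $2$-factorization is the same thing as a $2$-coloring of $E(X)$ in which each vertex meets exactly two edges of each color, and a transition is color-switching (resp. color-preserving) precisely when its two edges receive different (resp. equal) colors.

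First I would prove the forward implication. Assume $X$ admits such a factorization. Since the non-traversing transitions $a'b'$ and $c'd'$ are color-preserving at every vertex $x_e$, the two $Y$-edges meeting at each vertex of $G$ always receive the same color; propagating this equality around a cycle of $Y$ shows that every cycle of $Y$ is monochromatic. Hence a single color is attached to each cycle of $Y$, that is, to each vertex of $Y(G,F)$. The requirement that every traversing transition $a'c'$ be color-switching then says that, for each $F$-edge $e=uv$, the cycle of $Y$ through $u$ and the cycle through $v$ carry different colors; in particular $u$ and $v$ lie in distinct cycles, so $Y(G,F)$ is loopless, and the induced coloring of $V(Y(G,F))$ is a proper $2$-coloring. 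Thus $Y(G,F)$ is bipartite and $G,F$ are special.

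For the converse I would reverse this construction. Given that $Y(G,F)$ is bipartite, fix a proper $2$-coloring of its vertices (the cycles of $Y$) by blue and red, and color each edge of $X$ with the color of the unique cycle of $Y$ from which it descends. At each $x_e$ the edges $a',b'$ inherit the color of $u$'s cycle and $c',d'$ the color of $v$'s cycle; since $F$ is special these two cycles are distinct and adjacent in $Y(G,F)$, so their colors differ. Consequently each $x_e$ meets exactly two blue and two red edges, so each color class is a spanning $2$-regular subgraph, i.e.\ a $2$-factor. By the same token $a'b'$ and $c'd'$ are color-preserving while each of $a'c'$, $a'd'$, $b'c'$, $b'd'$ is color-switching, which is exactly the asserted correspondence.

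The only delicate point, and the real content of the argument, is the bookkeeping in the forward direction: one must recognize that ``color-preserving at both non-traversing transitions, for every $x_e$'' is equivalent to ``monochromatic on each cycle of $Y$,'' which is what lets the edge-coloring of $X$ collapse to a vertex-coloring of $Y(G,F)$. Once that reduction is in place, everything else is a direct matching of the transition conditions against the defining property of a proper $2$-coloring, and the two implications are genuinely inverse to one another.
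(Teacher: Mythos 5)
Your proof is correct and takes essentially the same approach as the paper's: both directions translate the blue/red edge coloring of $X$ into a proper $2$-coloring of the vertices of $Y(G,F)$ (cycles of $Y$) and back, using that non-traversing transitions force monochromatic cycles and traversing transitions force adjacent cycles to differ in color. You are in fact slightly more explicit than the paper at the one delicate point---propagating the color-preserving condition around each cycle of $Y$ to conclude monochromaticity, and noting looplessness of $Y(G,F)$---steps the paper's converse direction leaves implicit.
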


\begin{proof} Assume that $F$ is a special $1$-factor of $G$. Since
$Y(G, F)$ is bipartite, we can bicolor the vertices of the
bipartition: let one set of the bipartition be blue and the other
red. This coloring induces a coloring on the edges of $Y$: for every
blue vertex (respectively, red vertex) of $Y(G, F)$ we color in blue
(respectively, in red) the edges of the corresponding cycle of $Y$.
Since the edges of $Y$ are in one-to-one correspondence with the
edges of $X$, we obtain a $2$-factorization of $X$ into a blue and
red $2$-factor. Since $F$ is special, the edges of $F$ are incident
with vertices of $G$ belonging to cycles of $Y$ with different
colors (a blue cycle and a red cycle). Therefore, a traversing
transitions is color-switching and a non-traversing transition is
color-preserving.

Conversely, assume that $X$ has a blue and red $2$-factorization
such that the traversing transitions are color-switching and
non-traversing transitions are color-preserving. Since the edges of
$X$ are in one-to-one correspondence with the edges of $Y$, we can
partition the cycles of $Y$ into red cycles and blue cycles. Since
the traversing transitions are color-switching and non-traversing
transitions are color-preserving, the edges of $F$ are incident with
edges belonging to cycles of different colors. This means that the
graph $Y(G, F)$ is bipartite, hence $F$ is special.\end{proof}

\begin{proposition}\label{pro_admissible}
Let $G$ and $F$ be special and let $W$ be any Eulerian subgraph of
$X=G/F$ the associated quartic graph with a blue and red
$2$-factorization. Then $W$ is admissible if and only if each
$2$-valent vertex is incident with edges of different colors.
\end{proposition}

\begin{proof} An Eulerian subgraph $W$ is admissible if and only if
each $2$-valent vertex $v$ in $W$ is incident with edges forming a
traversing transition at $v$. By Theorem
\ref{th_red_blue_factorization}, a traversing transition is
color-switching. Hence, $W$ is admissible if and only if the edges
incident with $v$ have different colors.\end{proof}


Note that quartic graphs with a given $2$-factorization can be put
into one-to-one correspondence with special cubic graphs.

\begin{theorem}\label{th_cubic_quartic}
Any special cubic graph $G$ with a special $1$-factor $F$ gives rise
to the associated quartic graph with a blue and red
$2$-factorization. However, any quartic graph with a given
$2$-factorization determines back a unique special cubic graph by
color-preserving splitting vertices and inserting a special
$1$-factor.
\end{theorem}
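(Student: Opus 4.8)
The first assertion is essentially a reformulation of Theorem~\ref{th_red_blue_factorization}: starting from a special cubic graph $G$ with a special $1$-factor $F$, contracting $F$ produces the associated quartic graph $X=G/F$, and since $Y(G,F)$ is bipartite, Theorem~\ref{th_red_blue_factorization} already furnishes the blue and red $2$-factorization in which non-traversing transitions are color-preserving and traversing transitions are color-switching. I would therefore dispatch this direction in one sentence and concentrate on the converse.

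For the converse the plan is to describe explicitly the inverse construction, \emph{color-preserving vertex splitting}, and then check that it yields a special cubic graph whose contraction returns the given quartic graph. Let $X$ be a quartic graph with a fixed blue and red $2$-factorization. Because each monochromatic $2$-factor is $2$-regular, every vertex $x$ of $X$ is incident with exactly two blue half-edges and exactly two red half-edges. I would split $x$ into two new vertices $u_x$ and $v_x$, attaching the two blue half-edges at $x$ to $u_x$ and the two red half-edges to $v_x$, and joining $u_x$ and $v_x$ by a new edge $e_x$. Phrasing the split in terms of the incidence (half-edge) structure rather than of edges is what keeps the construction well defined even when $X$ has multiple edges or monochromatic loops. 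Then $u_x$ has degree three (its two blue edges together with $e_x$), and likewise $v_x$; hence the resulting graph $G$ is cubic and the set $F=\{e_x : x\in V(X)\}$ is a perfect matching, i.e.\ a $1$-factor.

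Next I would verify that $(G,F)$ is special and that splitting inverts contraction. Deleting $F$ leaves the graph $Y=G-F$, whose edges are exactly the (recolored) edges of $X$; thus $Y$ is the disjoint union of the blue cycles and the red cycles of the factorization and is a $2$-factor of $G$. Every edge $e_x$ of $F$ joins $u_x$, which lies on a blue cycle, to $v_x$, which lies on a red cycle, so in the auxiliary graph $Y(G,F)$ each edge runs between a blue vertex and a red vertex. Hence $Y(G,F)$ is bipartite, $F$ is special, and $G$ is special. Contracting each $e_x$ re-identifies $u_x$ with $v_x$ to recover $x$ and restores the four incident edges with their colors, so $G/F$ is isomorphic to $X$ together with its original $2$-factorization.

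The part requiring the most care is uniqueness, together with the multigraph bookkeeping. For uniqueness I would argue that the color-preserving requirement, namely that non-traversing transitions be monochromatic, forces the two blue half-edges to be assigned to one side of the split and the two red half-edges to the other: any other assignment would create a non-traversing transition joining edges of different colors, contradicting Theorem~\ref{th_red_blue_factorization}. This pins down the split at every vertex, so the special cubic graph recovered from $X$ is unique. The main obstacle is thus not a deep one but a matter of rigor, namely ensuring that the half-edge splitting is genuinely well defined for monochromatic digons and loops, and checking that contraction and color-preserving splitting are mutually inverse, so that the stated correspondence between quartic graphs with a $2$-factorization and special cubic graphs is an honest bijection.
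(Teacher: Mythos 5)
Your proposal is correct and follows essentially the same route as the paper: the forward direction is delegated to Theorem~\ref{th_red_blue_factorization}, and the converse uses exactly the paper's construction (the paper phrases it as taking disjoint copies of the blue and red $2$-factors and joining corresponding vertices by a $1$-factor, which is your half-edge vertex splitting in different words). Your additional care with the half-edge formalism for multigraphs, the explicit bipartiteness check of $Y(G,F)$, and the uniqueness argument merely make rigorous what the paper dismisses as ``straightforward to see.''
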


\begin{proof} By Theorem \ref{th_red_blue_factorization}, a special
cubic graph $G$ with a special $1$-factor $F$ gives rise to the
graph $X=G/F$ admitting a blue and red $2$-factorization.

Conversely, it is well known that every quartic graph $X$ possesses
a $2$-factorization, i.e. the edges of $X$ can be partitioned into a
blue and red $2$-factor. We use the blue and red $2$-factors of $X$
to construct a cubic graph $G$ as follows: put in $G$ a copy of the
blue $2$-factor and a copy of the red $2$-factor; construct a
$1$-factor of $G$ by joining vertices belonging to distinct copies.
It is straightforward to see that $G$ and $F$ are
special.\end{proof}


We will now apply this theory to the $I$-graphs. In Section
\ref{sec_characterization_ham_Igraph} we will see that this theory
allow us to find a Hamiltonian cycle in a proper $I$-graph and also
to find a family of special cubic graphs that contains the family of
$I$-graphs as a subfamily (see Section
\ref{sec_generalizing_Igraphs}).

Let $I(n, p, q)$ be an $I$-graph. A vertex $v_i$ (respectively,
$u_i$) is called an \emph{outer vertex} (respectively, an
\emph{inner vertex}). An edge of type $[v_i, v_{i+p}]$
(respectively, of type $[u_i, u_{i+q}]$) is called an \emph{outer
edge} (respectively, an \emph{inner edge}). An edge $[v_i, u_i]$ is
called a \emph{spoke}. The spokes of $I(n, p, q)$ determine a
$1$-factor of $I(n, p, q)$. The set of outer edges is said the
\emph{outer rim}, the set of inner edges is said the \emph{inner
rim}. As a consequence of the results proved in \cite{BPZ}, the
following holds.

\begin{proposition}\label{pro8}
Let $I(n, p, q)$, $n\geq 3$, $1\leq p, q\leq n-1$, $p, q\neq n/2$,
be an $I$-graph. Set $t=\gcd(n, q)$ and $s=n/t$. Then $t< n/2$ and
$3\leq s\leq n$. Moreover $I(n, p, q)$ is connected if an only if
$\gcd(t, p)=1$ and $\gcd(s, p)$ is coprime with $q$. It is proper if
and only if $t$ and $\gcd(s, p)$ are different from $1$.
\end{proposition}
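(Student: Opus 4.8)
The plan is to deduce all three assertions from the two facts quoted from \cite{BPZ}: that $I(n,p,q)$ is connected precisely when $\gcd(n,p,q)=1$, and that it is isomorphic to a generalized Petersen graph precisely when $\gcd(n,p)=1$ or $\gcd(n,q)=1$. Everything then reduces to elementary number theory relating the various $\gcd$'s involving $n$, $t=\gcd(n,q)$ and $s=n/t$.

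First I would settle the bounds on $t$ and $s$. Since $t\mid n$, and $1\le q\le n-1$ forces $t<n$, and since a divisor $d$ of $n$ with $n/2<d<n$ would give $1<n/d<2$, which is impossible, every divisor of $n$ smaller than $n$ is at most $n/2$. The value $t=n/2$ is excluded because it would force $q$ to be a multiple of $n/2$ lying in $[1,n-1]$, i.e.\ $q=n/2$, contrary to hypothesis. Hence $t<n/2$, and consequently $s=n/t>2$ gives $s\ge 3$, while $t\ge 1$ gives $s\le n$.

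For connectivity I would rewrite $\gcd(n,p,q)=\gcd(\gcd(n,q),p)=\gcd(t,p)$, so that the criterion of \cite{BPZ} becomes: $I(n,p,q)$ is connected if and only if $\gcd(t,p)=1$. It then remains to check that the extra clause ``$\gcd(s,p)$ coprime with $q$'' is automatically implied by $\gcd(t,p)=1$: if a prime $\ell$ divided both $\gcd(s,p)$ and $q$, then from $\ell\mid s\mid n$ and $\ell\mid q$ we would get $\ell\mid\gcd(n,q)=t$, and together with $\ell\mid p$ this would contradict $\gcd(t,p)=1$. Conversely $\gcd(t,p)=1$ alone already yields $\gcd(n,p,q)=1$, so the stated two-part condition is indeed equivalent to connectivity.

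Finally, for properness I would use that a connected $I$-graph is proper exactly when it is not a generalized Petersen graph, i.e.\ when $\gcd(n,p)>1$ and $\gcd(n,q)>1$. Here $\gcd(n,q)=t$, so the second inequality is just $t\ne 1$. For the first, under the connectivity assumption $\gcd(t,p)=1$ together with the factorization $n=st$, a prime-by-prime comparison of valuations shows $\gcd(n,p)=\gcd(st,p)=\gcd(s,p)$, so $\gcd(n,p)>1$ is equivalent to $\gcd(s,p)\ne1$. Thus a connected $I(n,p,q)$ is proper if and only if $t\ne1$ and $\gcd(s,p)\ne1$ (connectivity being presupposed by the very notion of a proper $I$-graph). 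The only genuinely non-routine points are the two gcd identities above — the redundancy of the ``coprime with $q$'' clause and the reduction $\gcd(st,p)=\gcd(s,p)$ — both of which I expect to dispatch cleanly by comparing $\ell$-adic valuations using $\gcd(t,p)=1$; the rest is bookkeeping on top of the cited results.
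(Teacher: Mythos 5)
Your proof is correct and follows essentially the same route as the paper's: both deduce the bounds on $t$ and $s$ from $t\mid\gcd(n,q)$ together with $q\neq n/2$, and both reduce connectivity and properness to the cited criteria $\gcd(n,p,q)=1$ and ($\gcd(n,p)=1$ or $\gcd(n,q)=1$) via the identities $\gcd(n,p,q)=\gcd(t,p)$ and $\gcd(st,p)=\gcd(s,p)$ under $\gcd(t,p)=1$. The only difference is cosmetic: you make explicit, via the valuation argument, that the clause ``$\gcd(s,p)$ coprime with $q$'' is already implied by $\gcd(t,p)=1$, a redundancy the paper's proof leaves implicit in its ``also the converse is true.''
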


\begin{proof} The integer $t$ satisfies the inequality $t<n/2$, since $t$ is a
divisor of $q$ and $q\leq n-1$, $q\neq n/2$; whence $3\leq s\leq n$.
By the results proved in \cite{BPZ}, $I(n, p, q)$ is connected if
and only if $\gcd(n, p, q)=1$. Since $n=st$ and $q=t(q/t)$, the
relation $\gcd(n, p, q)=1$ can be written as $\gcd(st, p,
t(q/t))=1$, whence $\gcd(t, p)=1$ and $\gcd(s, p)$ is coprime with
$q$. Also the converse is true, therefore $I(n, p, q)$ is connected
if and only if $\gcd(t, p)=1$ and $\gcd(s, p)$ is coprime with $q$.
A connected $I$-graph $I(n, p, q)$ is a generalized Petersen graph
if and only if $\gcd(n, q)=1$ or $\gcd(n, p)=1$ (see \cite{BPZ}). By
the previous results, $I(n, p, q)$ is a generalized Petersen graph
if and only if $1=\gcd(n, q)=t$ or $1=\gcd(n,
p)$$=\gcd(st,p)$$=\gcd(s, p)$. The assertion follows.\end{proof}


The smallest proper $I$-graphs are $I(12, 2, 3)$ and $I(12, 4, 3)$.
It is straightforward to see that the following result holds.

\begin{lemma}\label{pro_Igraph_special}
Let $F$ be the $1$-factor determined by the spokes of $I(n, p, q)$
and $X = Q(n, p, q)$ its associated quartic graph. Then $F$ is
special, the graph $X$ is a circulant multigraph $Cir(n; p, q)$, the
blue edges of $X$ correspond to the inner rim and the red edges to
the outer rim of $I(n, p, q)$.
\end{lemma}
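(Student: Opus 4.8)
The plan is to establish the three assertions in turn, all of which follow by directly unwinding the definitions of the $I$-graph, its spoke $1$-factor $F$, and the contraction $X = I(n,p,q)/F$. The one genuinely structural observation is that the spokes bridge the inner and outer parts; everything else is routine verification, which is why the statement is flagged as straightforward.

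First I would identify the complementary $2$-factor $Y = I(n,p,q) - F$. Removing the spokes leaves exactly the outer rim together with the inner rim, so $Y$ decomposes into outer cycles (formed by the edges $[v_i, v_{i+p}]$) and inner cycles (formed by the edges $[u_i, u_{i+q}]$). The crucial point is that every outer cycle consists solely of outer vertices $v_i$, while every inner cycle consists solely of inner vertices $u_i$. Consequently, since each spoke $[v_i, u_i]$ has one end among the outer vertices and one among the inner vertices, no spoke can be a chord of a cycle of $Y$, and every edge of $F$ joins an outer cycle to an inner cycle. Hence in the auxiliary graph $Y(I(n,p,q), F)$ the vertices corresponding to outer cycles and those corresponding to inner cycles form the two sides of a bipartition, so $Y(I(n,p,q), F)$ is bipartite and loopless. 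By the definition of a special $1$-factor, this shows that $F$ is special.

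Next I would compute $X = I(n,p,q)/F$ explicitly. Contracting a spoke $[v_i, u_i]$ identifies $v_i$ and $u_i$ into a single vertex, which I label $x_i$; thus $V(X) = \{x_0, \ldots, x_{n-1}\}$, the $n$ vertices being in bijection with the $n$ spokes. Under this identification an outer edge $[v_i, v_{i+p}]$ becomes $[x_i, x_{i+p}]$ and an inner edge $[u_i, u_{i+q}]$ becomes $[x_i, x_{i+q}]$ (subscripts modulo $n$). Therefore the edge-set of $X$ is precisely $\{[x_i, x_{i+p}] : 0 \le i \le n-1\} \cup \{[x_i, x_{i+q}] : 0 \le i \le n-1\}$, which is by definition the circulant multigraph $Cir(n; p, q)$ on $\mathbb{Z}_n$ with connection symbols $p$ and $q$. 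Permitting parallel edges here is exactly what is needed, since coincidences among the projected rim edges are what produce the multi-edges.

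Finally I would fix the colouring supplied by Theorem \ref{th_red_blue_factorization}: using the bipartition of $Y(I(n,p,q), F)$, colour the inner cycles blue and the outer cycles red. Since the edges of $Y$ are in one-to-one correspondence with the edges of $X$, this colours the projected inner edges $[x_i, x_{i+q}]$ blue and the projected outer edges $[x_i, x_{i+p}]$ red, yielding the blue and red $2$-factorization of $X$ in which the blue edges correspond to the inner rim and the red edges to the outer rim, as claimed. I expect no serious obstacle: the only matters requiring a little care are the bookkeeping of subscripts modulo $n$ in the contraction step and the need to allow multi-edges, but no genuine difficulty arises since the whole statement reduces to a direct reading of the definitions.
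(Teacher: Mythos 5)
Your proof is correct, and it fills in exactly the routine verification the paper leaves implicit: the paper states this lemma with no proof beyond the remark that it is straightforward, and your argument (outer and inner cycles of $Y$ give the bipartition of $Y(G,F)$ since every spoke joins an outer vertex to an inner one, contraction of the spokes visibly yields $Cir(n;p,q)$, and the bipartition coloring assigns blue to the inner rim and red to the outer rim) is precisely the intended reading of the definitions. No gaps or deviations to report.
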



In the next section we introduce a class of graphs $X(s,t,r)$ and
later show that it contains $Q(n,p,q)$ as its subclass.

\section{Graphs $X(s, t, r)$.}\label{sec_graphs_Xstr}

Let $\Gamma$ be a group in additive notation with identity element
$0_{\Gamma}$. Let $S$ be a list of not necessarily distinct elements
of $\Gamma$ satisfying the symmetry property $S=-S=\{-\gamma :
\gamma\in S\}$. The Cayley multigraph associated with $\Gamma$ and
$S$, denoted by $Cay(\Gamma, S)$, is an undirected multigraph having
the elements of $\Gamma$ as vertices and edges of the form $[x,
x+\gamma]$ with $x\in\Gamma$, $\gamma\in S$. If $\Gamma$ is a cyclic
group of order $n$, then $Cay(\Gamma, S)$ is a circulant multigraph
of order $n$. A Cayley multigraph $Cay(\Gamma, S)$ is regular of
degree $|S|$ (in determining $|S|$, each element of $S$ is
considered according to its multiplicity in $S$). It is connected if
an only if $S$ is a set of generators of $\Gamma$. If the elements
of $S$ are pairwise distinct, then $Cay(\Gamma, S)$ is a simple
graph and we will speak of Cayley graph. We are interested in
connected Cayley multigraphs of degree $4$. In this case we write
$S$ as the list $S=\{\pm\gamma_1, \pm\gamma_2\}$. A circulant
multigraph of order $n$ will be denoted by $Cir(n; \pm\gamma_1,
\pm\gamma_2)$. If $\gamma_i$, with $i\in\{1, 2\}$, is an involution
of $\Gamma$ or the trivial element, then $\pm\gamma_i$ means that
the element $\gamma_i$ appears twice in the list $S$. Consequently,
the associated Cayley multigraph has multiple edges or loops. We
will denote by $o(\gamma_i)$ the order of $\gamma_i$. We will show
that the Cayley multigraphs $Cay (\Gamma, \{\pm\gamma_1,
\pm\gamma_2\})$ defined on a suitable abelian group $\Gamma$ (and in
particular the circulant multigraphs $Cir(n; \pm\gamma_1,
\pm\gamma_2)$) can be given a different interpretation in terms of
$X(s, t, r)$ graphs (see Figure \ref{fig_graph_Xstr}) defined as
follows.

\begin{definition}\label{defX}
Let $s, t\geq 1$, $0\leq r\leq s-1$ be integers. Let $X(s, t, r)$ be
the graph with vertex-set $\{x^i_j : 0\leq i\leq t-1, 0\leq j\leq
s-1\}$ and edge-set $\{[x^i_j, x^i_{j+1}]: 0\leq i\leq t-1, 0\leq
j\leq s-1\}\cup$ $\{[x^i_j, x^{i+1}_j] : 0\leq i\leq t-2, 0\leq
j\leq s-1\}\cup$ $\{[x^{t-1}_j, x^{0}_{j+r}] : 0\leq j\leq s-1\}$
(the superscripts are read modulo $t$, the subscripts are read
modulo $s$).
\end{definition}

The graph $X(s, t, r)$ has edges of type $[x^i_j, x^i_{j+1}]$,
$[x^i_j, x^{i+1}_j]$ or $[x^{t-1}_j, x^{0}_{j+r}]$.  An edge of type
$[x^i_j, x^i_{j+1}]$ will be called \emph{horizontal}. An edge of
type $[x^i_j, x^{i+1}_j]$ will be called \emph{vertical}, an edge of
$[x^{t-1}_j, x^{0}_{j+r}]$ will be called \emph{diagonal}. For
$t=1$, we say that the edges are horizontal and diagonal (a diagonal
edge is an edge of type $[x^{0}_j, x^{0}_{j+r}]$). For $s=1$ or $(t,
r)=(1, 0)$, each diagonal edge is a loop. For $s=2$ or $s>2$ and
$(t, r)$=$(1, 1)$, $(1, s/2)$, $(2,0)$ the graph has multiple edges.
For the other values of $s$, $t$, $r$, the graph $X(s, t, r)$ is a
simple graph. A simple graph $X(s, t, r)$ is a graph bundle with a
cycle fiber $C_s$ over a cycle base $C_t$; the parameter $r$
represents an automorphism of the cycle $C_s$ that shifts the cycle
$r$ steps (see \cite{PV} for more details on graph bundles). In
literature a simple graph $X(s, t, r)$ is also called
$r$-pseudo-cartesian product of two cycles (see for instance
\cite{FaLiLi}). The definition of $X(s, t, r)$ suggests that the
graph $X(s, t, r)$ is isomorphic to $X(s, t, s-r)$. The existence of
this isomorphism can be also obtained from the following statement.

\begin{figure}[htbp]
\centering
\begin{minipage}[c]{11cm}
\includegraphics[width=11cm]{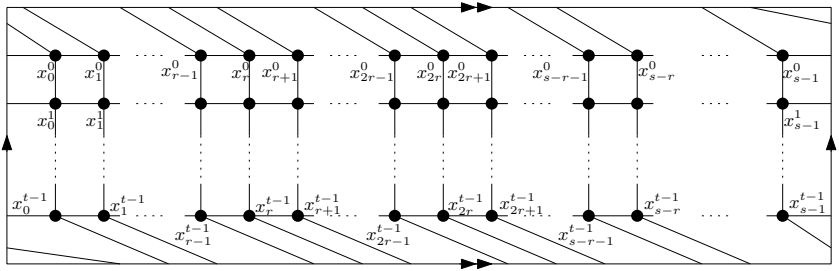}
\end{minipage}
\caption{The graph $X(s, t, r)$ is embedded into torus with
quadrilateral faces; it has a blue and red $2$-factorization: the
vertical and diagonal edges form the blue $2$-factor, the horizontal
edges form the red $2$-factor.}\label{fig_graph_Xstr}
\end{figure}

\begin{proposition}\label{pro_cayley}
Let $Cay(\Gamma, \{\pm\gamma_1, \pm\gamma_2\})$ be a connected
Cayley multigraph of degree $4$, where $\Gamma$ is an abelian group,
$o(\gamma_1)=s$ and $|\Gamma|/s=t$. Then $a\gamma_2=r\gamma_1$ for
some integer $r$, $0\leq r\leq s-1$, if and only if $a=t$.
Consequently, $Cay(\Gamma, \{\pm\gamma_1, \pm\gamma_2\})$ can be
represented as the graph $X(s, t, r)$ or $X(s, t, s-r)$.
\end{proposition}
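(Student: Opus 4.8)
The plan is to pass to the quotient group $\Gamma/\langle\gamma_1\rangle$ and then to build an explicit graph isomorphism onto $X(s,t,r)$. First I would record that, since $Cay(\Gamma,\{\pm\gamma_1,\pm\gamma_2\})$ is connected, the list $\{\pm\gamma_1,\pm\gamma_2\}$ generates $\Gamma$, so $\gamma_1$ and $\gamma_2$ generate $\Gamma$. As $\Gamma$ is abelian, the cyclic subgroup $H=\langle\gamma_1\rangle$ (of order $o(\gamma_1)=s$) is normal, hence $\Gamma/H$ is a group of order $|\Gamma|/s=t$. The image of $\gamma_1$ in $\Gamma/H$ is trivial, so $\Gamma/H$ is generated by the image $\bar\gamma_2$ of $\gamma_2$; thus $\Gamma/H$ is cyclic of order $t$ and $\bar\gamma_2$ has order exactly $t$. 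This is the structural core on which everything else rests.

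For the first assertion I would observe that the condition ``$a\gamma_2=r\gamma_1$ for some $r$ with $0\le r\le s-1$'' is equivalent to $a\gamma_2\in H$, i.e.\ to $a\bar\gamma_2=\bar 0$ in $\Gamma/H$. Since $\bar\gamma_2$ has order $t$, these $a$ are precisely the positive multiples of $t$, so the least positive $a$ for which the equation is solvable — which is the value intended in the statement — equals $t$. Moreover, for $a=t$ the membership $t\gamma_2\in H$ together with $|H|=s$ produces a \emph{unique} $r$ with $0\le r\le s-1$ satisfying $t\gamma_2=r\gamma_1$; this is the $r$ used in the representation.

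To obtain the representation I would define $\phi(x^i_j)=i\gamma_2+j\gamma_1$. Because $\bar\gamma_2$ has order $t$, the cosets $i\gamma_2+H$ for $0\le i\le t-1$ are distinct and exhaust $\Gamma$, and inside each coset the elements $j\gamma_1$ for $0\le j\le s-1$ run through $H$ bijectively; hence $\phi$ is a bijection from $V(X(s,t,r))$ onto $\Gamma$. Then I would check that $\phi$ carries the three edge families of $X(s,t,r)$ onto the edges given by $\pm\gamma_1,\pm\gamma_2$: horizontal edges $[x^i_j,x^i_{j+1}]$ map to $\pm\gamma_1$-edges (the wrap at $j=s-1$ using $s\gamma_1=0_\Gamma$); vertical edges $[x^i_j,x^{i+1}_j]$ with $i\le t-2$ map to $\pm\gamma_2$-edges; and the diagonal edges $[x^{t-1}_j,x^0_{j+r}]$ map to the remaining $\pm\gamma_2$-edges, precisely because $\phi(x^{t-1}_j)+\gamma_2=t\gamma_2+j\gamma_1=(r+j)\gamma_1=\phi(x^0_{j+r})$. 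It is exactly here that the first part of the statement is invoked.

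Finally, for the alternative $X(s,t,s-r)$, I would note that replacing the generator $\gamma_1$ by the equally valid generator $-\gamma_1$ (of the same order $s$) turns $r$ into $s-r$, since $r\gamma_1=(s-r)(-\gamma_1)$; equivalently one may cite the isomorphism $X(s,t,r)\cong X(s,t,s-r)$ already observed before the proposition. I expect the main obstacle to be the bijectivity of $\phi$, that is, verifying that the $st$ pairs $(i,j)$ yield pairwise distinct elements of $\Gamma$; this is the content of the coset argument and hinges entirely on $\bar\gamma_2$ having order $t$. Once bijectivity is secured, the matching of the three edge types is a direct computation.
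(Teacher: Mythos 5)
Your proposal is correct and follows essentially the same route as the paper's proof: both rest on the coset decomposition of $\Gamma$ by $\langle\gamma_1\rangle$ (your quotient-group phrasing, with $\bar\gamma_2$ of order exactly $t$, is just a cleaner formalization of the paper's observation that $|\Gamma/\langle\gamma_1\rangle|=t$ and connectivity forces $a\gamma_2\in\langle\gamma_1\rangle$ precisely when $t\mid a$), both use the same bijection $x^i_j\leftrightarrow i\gamma_2+j\gamma_1$ with the same verification of the three edge families including the diagonal computation $t\gamma_2+j\gamma_1=(r+j)\gamma_1$, and both obtain $X(s,t,s-r)$ by replacing $\gamma_1$ with $-\gamma_1$. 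Your explicit treatment of bijectivity and of the reading of the statement as identifying the least positive $a$ are welcome refinements, but they do not change the argument's substance.
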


\begin{proof} We show that $G_1=Cay(\Gamma, \{\pm\gamma_1, \pm\gamma_2\})$ and
$G_2=X(s, t, r)$ are isomorphic. Since $\gamma_1$ and $\gamma_2$ are
generators of $\Gamma$, the elements of $\Gamma$ can be written in
the form $i\gamma_2+j\gamma_1$, where
$i\gamma_2\in\langle\gamma_2\rangle$,
$j\gamma_1\in\langle\gamma_1\rangle$. Hence we can describe the
elements of $\Gamma$ by the left cosets of the subgroup
$\langle\gamma_1\rangle$ in $\Gamma$. By this representation, we can
see that the endvertices of an edge $[x, x\pm\gamma_1]$ of
$Cay(\Gamma, \{\pm\gamma_1, \pm\gamma_2\})$ belong to the same left
coset of $\langle\gamma_1\rangle$ in $\Gamma$; the endvertices of an
edge $[x, x\pm\gamma_2]$ belong to distinct left cosets of
$\langle\gamma_1\rangle$ in $\Gamma$. Therefore,
$a\gamma_2=r\gamma_1\in\langle\gamma_1\rangle$ if and only if $a=t$,
since $Cay(\Gamma, \{\pm\gamma_1, \pm\gamma_2\})$ is connected and
$|\Gamma/\langle\gamma_1\rangle|=t$. Hence we can set
$V(G_1)=\{i\gamma_2+j\gamma_1: 0\leq i\leq t-1, 0\leq j\leq s-1\}$
and $E(G_1)=\{[i\gamma_2+j\gamma_1, (i+1)\gamma_2+j\gamma_1],
[i\gamma_2+j\gamma_1, i\gamma_2+(j+1)\gamma_1]: 0\leq i\leq t-1,
0\leq j\leq s-1\}$. The map $\varphi: V(G_1)\to V(G_2)$ defined by
$\varphi (i\gamma_2+j\gamma_1)=x^i_j$ is a bijection between
$V(G_1)$ and $V(G_2)$. Moreover, if $v_1$, $v_2$ are adjacent
vertices of $G_1$, i.e., $v_1=i\gamma_2+j\gamma_1$ and
$v_2=(i+1)\gamma_2+j\gamma_1$ (or $v_2=i\gamma_2+(j+1)\gamma_1$),
then $\varphi(v_1)=x^i_j$, $\varphi(v_2)=x^{i+1}_j$ (or $\varphi
(v_2)=x^i_{j+1}$) are adjacent vertices of $G_2$. In particular, if
$v_1=(t-1)\gamma_2+j\gamma_1$ and
$v_2=t\gamma_2+j\gamma_1$$=r\gamma_1+j\gamma_1$$=(r+j)\gamma_1$,
then $\varphi(v_1)=x^{t-1}_j$, $\varphi(v_2)=x^{0}_{r+j}$ are
adjacent vertices of $G_2$. It is thus proved that $\varphi$ is an
isomorphism between $G_1$ and $G_2$. If we replace the element
$\gamma_1$ by its inverse $-\gamma_1$, then $G_1$ is the graph $X(s,
t, s-r)$.\end{proof}


In what follows, we show that for $s, t\geq 1$ there exists a Cayley
multigraph on a suitable abelian group that satisfies Proposition
\ref{pro_cayley}, i.e., for every $s, t\geq 1$ the graph $X(s, t,
r)$ can be represented as a Cayley multigraph. The proof is
particularly easy when $t=1$; $r=0$; or $s=2$. For these cases, the
following holds.

\begin{proposition}\label{pro_r0}
The graph $X(s, 1, r)$, with $s\geq 1$, $0\leq r\leq s-1$, is the
circulant multigraph $Cir(s$; $\pm 1, \pm r)$. The graph $X(s, t,
0)$, with $s, t\geq 1$, is the Cayley multigraph $Cay(\mathbb
Z_s\times\mathbb Z_t$, $\{\pm(1, 0)$, $\pm(0,1)\})$. The graph $X(2,
t, 1)$, with $t\geq 1$, is the circulant multigraph $Cir(2t; \pm t,
\pm 1)$.
\end{proposition}

\begin{proof}
For the graph $X(s, 1, r)$ we apply Proposition \ref{pro_cayley}
with $\Gamma=\mathbb Z_{st}$, $\gamma_1=1$, $\gamma_2=r$. For the
graph $X(s, t, 0)$ we apply Proposition \ref{pro_cayley} with
$\Gamma=\mathbb Z_s\times\mathbb Z_t$, $\gamma_1=(1,0)$,
$\gamma_2=(0, 1)$. For the graph $X(2, t, 1)$ we apply Proposition
\ref{pro_cayley} with $\Gamma=\mathbb Z_{2t}$, $\gamma_1=t$,
$\gamma_2=1$.
\end{proof}


The following lemmas concern the graph $X(s, t, r)$ with $s\geq 3$,
$t\geq 2$ and $0<r\leq s-1$. They will be used in the proof of
Proposition \ref{pro_circulant}.

\begin{lemma}\label{lemma_coprime}
Let $a>1$ be an integer and let $b\geq 1$ be a divisor of $a$. Let
$\{[c]_b : 0\leq c\leq b-1\}$ be the residue classes modulo $b$.
Every equivalence class $[c]_b$ whose representative $c$ is coprime
with $b$ contains at least one integer $h$, $1\leq h\leq a-1$, such
that $\gcd(a, h)=1$.
\end{lemma}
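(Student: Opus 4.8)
The plan is to reduce the statement to a purely multiplicative existence claim and only afterwards worry about the range. Writing $a=bm$ (possible since $b\mid a$), I first observe that it suffices to produce \emph{some} integer $h$ with $h\equiv c\pmod b$ and $\gcd(h,a)=1$, ignoring the bounds: once such an $h$ is in hand, its least nonnegative residue $h_0=h\bmod a$ still satisfies $h_0\equiv c\pmod b$ (because $b\mid a$) and $\gcd(h_0,a)=\gcd(h,a)=1$. Moreover $h_0\neq 0$, since $h_0=0$ would force $a\mid h$ and hence $\gcd(h,a)=a>1$. Thus $1\le h_0\le a-1$, which is the required representative of $[c]_b$.

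It then remains to find $h$ in the class $[c]_b$ coprime to $a$, and for this I would split the prime divisors of $a$ into two groups. If $p\mid a$ and $p\mid b$, then every $h\equiv c\pmod b$ satisfies $h\equiv c\pmod p$; since $\gcd(c,b)=1$ forces $p\nmid c$, such a prime can never divide $h$, so this group is handled at no cost. The only primes that must be avoided are those $p\mid a$ with $p\nmid b$.

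For those primes I would give an explicit construction. Set $k=\prod\{\, p : p\mid a,\ p\nmid b,\ p\nmid c \,\}$ and $h=c+bk$. Then $h\equiv c\pmod b$ by construction, and I would verify $\gcd(h,a)=1$ one prime at a time. For $p\mid b$ the remark above applies. For $p\mid a$ with $p\nmid b$ and $p\nmid c$ we have $p\mid k$, hence $h\equiv c\not\equiv 0\pmod p$. For $p\mid a$ with $p\nmid b$ but $p\mid c$, the prime $p$ is excluded from the squarefree product, so $p\nmid k$ and $p\nmid b$, whence $h\equiv bk\not\equiv 0\pmod p$. No prime divisor of $a$ divides $h$, so $\gcd(h,a)=1$. (Equivalently one may invoke the Chinese Remainder Theorem: $b$ is invertible modulo each $p\nmid b$, so the progression $c+tb$ hits every residue mod $p$, and one picks $t$ avoiding $0$ simultaneously.)

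The degenerate case $b=1$ (the single class, with representative $c=0$) is subsumed: the construction returns an $h$ coprime to $a$, and the reduction step yields $h_0\in\{1,\dots,a-1\}$, which is nonempty because $a>1$. The step needing the most care is the clean partition of the primes of $a$ according to whether they divide $b$, together with the observation that $\gcd(c,b)=1$ disposes of the first group for free; once this bookkeeping is in place the verification is routine. I note that the same reduction also admits an inductive reading, passing from $(a,b)$ to $(m,\gcd(b,m))$ with $m=a/b<a$, which could serve as an alternative proof by induction on $a$.
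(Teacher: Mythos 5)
Your proof is correct, and its core construction is the same as the paper's: your $k=\prod\{p : p\mid a,\ p\nmid b,\ p\nmid c\}$ is exactly the paper's $\omega$ (the product of the primes of $a$ lying outside $\mathcal P_b\cup\mathcal P_c$), your $h=c+bk$ is the paper's $h=c+\omega b$, and the prime-by-prime verification of $\gcd(h,a)=1$ matches theirs. Where you genuinely diverge is in securing the range $1\leq h\leq a-1$: the paper proves $h<a$ directly via the estimate $\omega\leq a/(2b)$, which rests on $(a/b)\geq\bigl(\prod_{p\in\mathcal P_c}p\bigr)\cdot\omega\geq 2\omega$ and hence on the case assumption $\gcd(c,a)\neq 1$ (so that $\mathcal P_c\neq\emptyset$), together with $c<b\leq a/2$; this forces the paper into a three-way case split ($b=a$; $\gcd(c,a)=1$; the generic case). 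Your reduction modulo $a$ — observing that $h_0=h\bmod a$ keeps both the class modulo $b$ (since $b\mid a$) and coprimality with $a$, and that $h_0=0$ is impossible when $\gcd(h,a)=1<a$ — eliminates the size estimate entirely and absorbs all the paper's special cases (including $b=a$ and $b=1$) into one uniform argument. What the paper's route buys is an explicit representative below $a$ without any modular reduction; what yours buys is robustness and brevity, since the inequality $\omega\leq a/(2b)$ is the one delicate step in the paper's proof and you never need it. Your closing remark about an inductive variant via $(m,\gcd(b,m))$ is plausible but unsubstantiated as stated; since it is offered only as an aside, it does not affect the correctness of the main argument.
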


\begin{proof} The assertion is true if $b=a$ (we set $h=c$). We
consider $b<a$. Let $[c]_{b}$ be an equivalence class modulo $b$
with $1\leq c\leq b-1$ and $\gcd(c, b)=1$. If $c$ is coprime with
$a$, then we set $h=c$ and the assertion follows. We consider the
case $\gcd(c, a)\neq 1$. We denote by $\mathcal P$ the set of
distinct prime numbers dividing $a$. We denote by $\mathcal P_{b}$
(respectively, by $\mathcal P_c$) the subset of $\mathcal P$
containing the prime numbers dividing $b$ (respectively, $c$). Since
$b$ is a divisor of $a$ (respectively, $\gcd(c, a)\neq 1$), the set
$\mathcal P_{b}$ (respectively, $\mathcal P_c$) is non-empty. Since
$c$ and $b$ are coprime, the subsets $\mathcal P_{b}$, $\mathcal
P_c$ are disjoint. We set $\mathcal P'=\mathcal P\smallsetminus
(\mathcal P_{b}\cup\mathcal P_c)$. The set $\mathcal P'$ might be
empty. We denote by $\omega$ the product of the prime numbers in
$\mathcal P'$ (if $\mathcal P'$ is empty, then we set $\omega=1$)
and consider the integer $h=c+\omega b\in [c]_b$. We show that $h<
a$. Note that $\omega\leq a/(2b)$. More specifically, $(a/b)\geq
(\prod_{p\in\mathcal P_c}p)\cdot\omega\geq 2\omega$, whence
$\omega\leq a/(2b)$. Hence $h=c+\omega b\leq c+ (a/2)<a$, since
$c<b$ and $b\leq (a/2)$. One can easily verify that $\gcd(h,
a)=\gcd(c+\omega b, a)=1$, since no prime number in $\mathcal
P=\mathcal P'\cup\mathcal P_b\cup\mathcal P_c$ can divide $c+\omega
b$. The assertion follows.\end{proof}

\begin{lemma}\label{lemma_coprime2}
Let $s\geq 3$, $t\geq 2$ and $\mathbb Z_{st/d_1}$ be the cyclic
group of integers modulo $st/d_1$, where $d_1\geq 1$ is a divisor of
$d=\gcd(s, t)$. Let $\langle t/d_1\rangle$ be the cyclic subgroup of
$\mathbb Z_{st/d_1}$ generated by the integer $t/d_1$ and let
$x+\langle t/d_1\rangle$, $y+\langle t/d_1\rangle$ be left cosets of
$\langle t/d_1\rangle$ in $\mathbb Z_{st/d_1}$. If $x$, $y\in
\mathbb Z_{st/d_1}$ are congruent modulo $d/d_1$, then $t(x+\langle
t/d_1\rangle)=\{tx+\mu t^2/d_1 :0\leq\mu\leq s-1\}$ and $t(y+\langle
t/d_1\rangle)=\{ty+\mu' t^2/d_1 :0\leq\mu'\leq s-1\}$ represent the
same subset of $\mathbb Z_{st/d_1}$.
\end{lemma}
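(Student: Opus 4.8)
The plan is to recast the whole statement in terms of the multiplication-by-$t$ endomorphism of the cyclic group, which turns the equality of two listed sets into the equality of two cosets of a single subgroup. Write $N=st/d_1$ throughout. The first step is a pair of order computations in $\mathbb{Z}_N$. Since $N=s\cdot(t/d_1)$, the element $t/d_1$ divides $N$, so $\gcd(N,t/d_1)=t/d_1$ and the generator $t/d_1$ has order exactly $s$; this guarantees that $\{x+\mu t/d_1:0\le\mu\le s-1\}$ is genuinely the full coset $x+\langle t/d_1\rangle$ (and likewise for $y$), so I am really comparing the images of two full cosets. Next let $\phi\colon\mathbb{Z}_N\to\mathbb{Z}_N$ be the homomorphism $\phi(z)=tz$. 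Because $\phi$ carries a coset of a cyclic subgroup to a coset of the image subgroup, I obtain
\[
\phi(x+\langle t/d_1\rangle)=tx+\langle t^2/d_1\rangle,\qquad
\phi(y+\langle t/d_1\rangle)=ty+\langle t^2/d_1\rangle .
\]
Setting $H=\langle t^2/d_1\rangle$, the two sets in the statement are exactly these images, so the lemma reduces to the single assertion $tx+H=ty+H$, equivalently $t(y-x)\in H$.

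The second step is the arithmetic verification of this membership. I would first record that $\gcd(N,t^2/d_1)=(t/d_1)\gcd(s,t)=(t/d_1)d$, so $H$ has order $s/d$; this is not strictly needed but clarifies the size of the coset. Now use the hypothesis $x\equiv y\pmod{d/d_1}$ to write $y-x=k\,(d/d_1)$ for some integer $k$, whence $t(y-x)=k\,(td/d_1)$. Since $H$ is a subgroup, it suffices to show the single element $td/d_1$ lies in $H$, i.e.\ that $td/d_1\equiv \mu\,t^2/d_1\pmod N$ for some integer $\mu$. All three quantities $td/d_1$, $t^2/d_1$ and $N=st/d_1$ are multiples of $t/d_1$, so cancelling this common factor from the congruence leaves the condition $d\equiv \mu t\pmod s$.

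The final step is to solve this last congruence, and this is the only point requiring genuine care. The congruence $d\equiv \mu t\pmod s$ is solvable in $\mu$ if and only if $\gcd(s,t)\mid d$; since $\gcd(s,t)=d$, this is immediate (equivalently, $\langle t\rangle=\langle d\rangle$ in $\mathbb{Z}_s$ contains $d$). Hence $td/d_1\in H$, so $t(y-x)=k\,(td/d_1)\in H$, which gives $tx+H=ty+H$ and therefore the equality of the two sets. The main obstacle is bookkeeping rather than depth: one must justify the cancellation of the common factor $t/d_1$ in a modular congruence (using that the modulus is itself a multiple of $t/d_1$) and must be precise about the two order computations so that the $\mu$-ranges $0\le\mu\le s-1$ really produce the full cosets before $\phi$ is applied. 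Everything else follows from the elementary theory of cyclic groups and the solvability criterion for linear congruences.
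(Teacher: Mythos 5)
Your proof is correct, and it is organized differently from the paper's. The paper works hands-on with the listed sets: it writes $t=sm'+m$ with $m=t\bmod s$, observes $\gcd(s,m)=\gcd(s,t)=d$ so that $dt/d_1$ and $mt/d_1$ generate the same subgroup of $\mathbb Z_{st/d_1}$ of order $s/d$, and then verifies element-wise that $tx+\mu\,mt/d_1=ty+\lambda\,dt/d_1+\mu\,mt/d_1$ can be rewritten as $ty+\mu'\,mt/d_1$. You instead make the structure explicit: since $t/d_1$ has order exactly $s$ in $\mathbb Z_{st/d_1}$, the two listed sets are precisely the images of the full cosets $x+\langle t/d_1\rangle$ and $y+\langle t/d_1\rangle$ under the endomorphism $z\mapsto tz$, hence are cosets $tx+H$ and $ty+H$ of the single subgroup $H=\langle t^2/d_1\rangle$; equality then reduces to the one membership $t(y-x)\in H$, which via $y-x=k\,(d/d_1)$ reduces to $td/d_1\in H$, i.e.\ to solvability of $\mu t\equiv d\pmod s$, immediate from $\gcd(s,t)=d$. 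The underlying arithmetic fact is of course the same ($\langle dt/d_1\rangle\subseteq\langle t^2/d_1\rangle$ because $\gcd(s,t)=d$), but your coset framing buys a cleaner logic: it makes transparent that the two sets are always cosets of the same subgroup (so equal or disjoint), isolates exactly where the hypothesis $x\equiv y\pmod{d/d_1}$ is used, and replaces the paper's element-wise bookkeeping with the standard solvability criterion for a linear congruence; your cancellation of the common factor $t/d_1$ is legitimate precisely because the modulus $st/d_1$ is itself a multiple of $t/d_1$, as you note. What the paper's computation buys in exchange is the explicit description of each set as $s/d$ distinct elements $\{tx+\mu\,mt/d_1:1\le\mu\le s/d\}$, information you recover anyway in your (optional) computation $\gcd(st/d_1,\,t^2/d_1)=(t/d_1)d$, giving $|H|=s/d$.
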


\begin{proof} Set $x=y+\lambda d/d_1$ with $\lambda\in\mathbb Z$ and
$t=sm'+m$ with $m'\in\mathbb Z$ and $0\leq m\leq s-1$. Since
$\gcd(s, t)=d$, then also $\gcd(s, m)=d$. Hence the integers
$dt/d_1,  mt/d_1\in\mathbb Z_{st/d_1}$ generate the same cyclic
subgroup of $\langle t/d_1\rangle$ of order $s/d$. Since
$t^2/d_1=(sm'+m)t/d_1\equiv mt/d_1\pmod{st/d_1}$, each set
$t(x+\langle t/d_1\rangle)$, $t(y+\langle t/d_1\rangle)$ consists of
exactly $s/d$ distinct elements of $\mathbb Z_{st/d_1}$, namely,
$t(x+\langle t/d_1\rangle)=\{tx+\mu mt/d_1: 1\leq \mu\leq s/d\}$,
$t(y+\langle t/d_1\rangle)=\{ty+\mu' mt/d_1: 1\leq \mu'\leq s/d\}$.
Therefore, to prove that $t(x+\langle t/d_1\rangle)=t(y+\langle
t/d_1\rangle)$, it suffices to show that every element of
$t(x+\langle t/d_1\rangle)$ is contained in $t(y+\langle
t/d_1\rangle)$. Consider $tx+\mu mt/d_1\in t(x+\langle
t/d_1\rangle)$. Since $x=y+\lambda d/d_1$, we can write $tx+\mu
mt/d_1=t(y+\lambda d/d_1)+\mu mt/d_1$, whence $tx+\mu
mt/d_1=ty+\lambda dt/d_1+\mu mt/d_1$. Since $\langle dt/d_1\rangle
=\langle mt/d_1\rangle$, we can set $\lambda dt/d_1+\mu mt/d_1\equiv
\mu' mt/d_1\pmod {st/d_1}$, with $0\leq \mu'\leq s/d$. Hence $tx+\mu
mt/d_1\equiv ty+\mu' mt/d_1\pmod{st/d_1}$, that is, $tx+\mu
mt/d_1\in t(y+\langle t/d_1\rangle)$. The assertion
follows.\end{proof}

\begin{proposition}\label{pro_circulant}
Let $s\geq 3$, $t\geq 2$, $0<r\leq s-1$, with $\gcd(s, t, r)=d_1$.
The cyclic group $\mathbb Z_{st/d_1}$ of integers modulo $st/d_1$
contains an integer $k$ such that $\gcd(k, t)=1$ and $k\equiv
r/d_1\pmod{s/d_1}$. The graph $X(s, t, r)$ can be represented as the
Cayley graph $Cay(\mathbb Z_{st/d1}\times \mathbb Z_{d_1}, \{\pm
(t/d_1, 1), \pm(k, 0)\})$. In particular, if $d_1=1$ then $X(s, t,
r)$ can be represented as the circulant graph $Cir(st; \pm t, \pm
k)$.
\end{proposition}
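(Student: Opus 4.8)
The plan is to deduce everything from Proposition \ref{pro_cayley}, so that the whole argument reduces to exhibiting an abelian group $\Gamma$ together with two generators $\gamma_1,\gamma_2$ and then checking the three structural conditions of that proposition: $o(\gamma_1)=s$, $|\Gamma|/s=t$, and $t\gamma_2=r\gamma_1$ with $0\le r\le s-1$. The natural candidate is $\Gamma=\mathbb Z_{st/d_1}\times\mathbb Z_{d_1}$ with $\gamma_1=(t/d_1,1)$ and $\gamma_2=(k,0)$, where $k$ is the integer whose existence is asserted. Since $|\Gamma|=st$, the index condition $|\Gamma|/s=t$ is immediate, so the real content is (i) producing $k$, and (ii) verifying the order, the defining relation, and connectedness.

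I expect the existence of $k$ to be the main obstacle, and it is exactly what Lemmas \ref{lemma_coprime} and \ref{lemma_coprime2} are designed to handle. I want $k\in\mathbb Z_{st/d_1}$ lying in the residue class $r/d_1$ modulo $s/d_1$ and coprime to $t$. The $t$ representatives of this class in $\mathbb Z_{st/d_1}$ are $r/d_1+j(s/d_1)$ for $0\le j\le t-1$, so the task is to choose $j$ making the value coprime to $t$. For primes dividing $t$ but not $s/d_1$ one has full freedom in $j$; the delicate primes are the common divisors of $s/d_1$ and $t$, where the value is pinned modulo $p$ to $r/d_1$. Here I would first use the hypothesis $\gcd(s,t,r)=d_1$ to see that $r/d_1$ is coprime to the relevant divisor $b$ of $t$ built from $\gcd(s/d_1,t)$, and then apply Lemma \ref{lemma_coprime} (with $a=t$) to extract a genuine representative coprime to $t$ in the prescribed class modulo $b$. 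Lemma \ref{lemma_coprime2}, which identifies exactly which cosets of $\langle t/d_1\rangle$ collapse under multiplication by $t$ (those whose representatives agree modulo $d/d_1$), is then the tool that lets me transport the congruence from the modulus $\gcd(s/d_1,t)$ up to the full modulus $s/d_1$ without destroying coprimality to $t$.

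With $k$ in hand, the remaining verifications are routine. The order of $\gamma_1=(t/d_1,1)$ is $\mathrm{lcm}(s,d_1)=s$, since $d_1\mid s$ forces $\mathrm{lcm}(s,d_1)=s$, giving $o(\gamma_1)=s$. For the defining relation I compute $t\gamma_2=(tk,0)$ and, using $d_1\mid r$, $r\gamma_1=(rt/d_1,\,r\bmod d_1)=(rt/d_1,0)$; the congruence $k\equiv r/d_1\pmod{s/d_1}$ is equivalent to $tk\equiv rt/d_1\pmod{st/d_1}$, so $t\gamma_2=r\gamma_1$ holds exactly as required. Connectedness of the Cayley multigraph, i.e. that $\gamma_1,\gamma_2$ generate $\Gamma$, is where $\gcd(k,t)=1$ is used: since $d_1\gamma_1=(t,0)$ and $\gamma_2=(k,0)$, coprimality of $k$ and $t$ gives $(1,0)\in\langle\gamma_1,\gamma_2\rangle$, after which $\gamma_1$ supplies $(0,1)$, so $\langle\gamma_1,\gamma_2\rangle=\Gamma$. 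Proposition \ref{pro_cayley} then identifies $Cay(\Gamma,\{\pm\gamma_1,\pm\gamma_2\})$ with $X(s,t,r)$ (replacing $\gamma_1$ by $-\gamma_1$ yields the mirror $X(s,t,s-r)$, consistent with $X(s,t,r)\cong X(s,t,s-r)$). Finally, the special case $d_1=1$ is read off directly: then $\Gamma=\mathbb Z_{st}$, $\gamma_1=t$ and $\gamma_2=k$, so the representation collapses to the circulant graph $Cir(st;\pm t,\pm k)$.
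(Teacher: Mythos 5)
Your architecture is exactly the paper's: the same group $\mathbb Z_{st/d_1}\times\mathbb Z_{d_1}$ with $\gamma_1=(t/d_1,1)$, $\gamma_2=(k,0)$, existence of $k$ via Lemmas \ref{lemma_coprime} and \ref{lemma_coprime2}, and a final appeal to Proposition \ref{pro_cayley}; your explicit verifications of $o(\gamma_1)=s$, of $t\gamma_2=r\gamma_1$, and of connectedness from $\gcd(k,t)=1$ (which the paper leaves implicit) are all correct. The gap is at the step you yourself identified as the main obstacle, the existence of $k$, which you only sketch — and the coprimality claim you lean on there is false. Your CRT analysis of the pinned primes (common prime divisors of $s/d_1$ and $t$) is right, but the hypothesis $\gcd(s,t,r)=d_1$ gives only $\gcd(s/d_1,t/d_1,r/d_1)=1$, i.e.\ coprimality of $r/d_1$ with $d/d_1=\gcd(s/d_1,t/d_1)$ — the modulus the paper feeds into Lemma \ref{lemma_coprime} — and \emph{not} coprimality of $r/d_1$ with $\gcd(s/d_1,t)$; the discrepancy is precisely primes dividing $d_1$ and $t$ but not $t/d_1$. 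Concretely, take $(s,t,r)=(8,2,4)$, so $d_1=2$: you would need $k\equiv 2\pmod 4$ and $\gcd(k,2)=1$ simultaneously, which is impossible. So your argument cannot be completed as sketched when $d_1>1$ — and in fact no argument can, since this is a counterexample to the proposition itself: any connected $Cay(\mathbb Z_8\times\mathbb Z_2,\{\pm(1,1),\pm(k,0)\})$ forces $k$ odd (for $k$ even both generators lie in the index-two subgroup $\{(a,b):a\equiv b\pmod 2\}$), and then $2(k,0)=r'(1,1)$ gives $r'\in\{2,6\}$, so by Proposition \ref{pro_cayley} the graph is $X(8,2,2)$ or $X(8,2,6)$, neither isomorphic to $X(8,2,4)$ (count $4$-cycles: $24$ versus $20$).

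In fairness, the paper's own proof stumbles at the very same spot, one step later: it produces $h$ with $\gcd(h,t)=1$ in the class of $r/d_1$ modulo $d/d_1$, uses Lemma \ref{lemma_coprime2} to get $k\in h+\langle t/d_1\rangle$ with $tk\equiv rt/d_1\pmod{st/d_1}$, and then asserts $\gcd(k,t)=1$ ``since $\gcd(h,t)=1$''; but $k\equiv h\pmod{t/d_1}$ only controls $\gcd(k,t/d_1)$, and in the example above the admissible values $k\in\{2,6\}$ are even. When $d_1=1$ both your argument and the paper's are sound: the pinned primes then divide $\gcd(s,t)$, the hypothesis does give $\gcd(r,\gcd(s,t))=1$, and $k=h+\mu t$ stays coprime to $t$; indeed your direct choice $k=r+js$ with $j$ fixed by CRT to avoid each prime of $t$ yields a clean self-contained proof in that case, which is the one needed for the $I$-graph application (Theorem \ref{pro_gcd_1}). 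So: right skeleton, right diagnosis of where the whole weight of the proposition sits, but the decisive coprimality step is asserted rather than proved, and as stated it — like the proposition itself for $d_1>1$ — is false.
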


\begin{proof} Set $d=\gcd(s,t)$. Since $\gcd(s,t,r)=d_1$, the
integer $r/d_1$ is coprime with $\gcd(s/d_1, t/d_1)=d/d_1$. Hence,
$r/d_1$ belongs to an equivalence class $[c]_{d/d_1}$ whose
representative is coprime with $d/d_1$. By Lemma
\ref{lemma_coprime}, the class $[c]_{d/d_1}$ contains an integer
$h$, $1\leq h<t$, such that $\gcd(h,t)=1$. Consider the cyclic group
$\mathbb Z_{st/d_1}$. Since $r/d_1<s$, $h<t$, the integer $r/d_1$
and $h$ belong to $\mathbb Z_{st/d_1}$. Hence we can apply Lemma
\ref{lemma_coprime2} with $x=r/d_1$, $y=h$ and find that
$t(r/d_1+\langle t/d_1\rangle)=t(h+\langle t/d_1\rangle)$, i.e.,
there exists an integer $k\in h+\langle t/d_1\rangle$ such that
$tk\equiv rt/d_1\pmod{st/d_1}$. The integer $k$ is coprime with $t$,
since $\gcd(h, t)=1$. The assertion follows from Proposition
\ref{pro_cayley}, by setting $\Gamma=\mathbb Z_{st/d_1}\times\mathbb
Z_{d_1}$, $\gamma_1=(t/d_1, 1)$, $\gamma_2=(k,0)$. Note: if $d_1=1$,
then $\Gamma$ is the cyclic group of order $st$, $\gamma_1=t$,
$\gamma_2=k$ and $X(s, t, r)$ is the circulant graph $Cir(st; \pm t,
\pm k)$.\end{proof}


The result that follows is based on a well-known consequence of the
Chinese Remainder Theorem. More specifically, it is known that if
$a$, $b$, and $n$ are positive integers, with $\gcd(a, n)=c\geq 1$,
then the equation $ax\equiv b\pmod n$ admits a solution if and only
if $c$ is a divisor of $b$ and in this case $x\equiv
(a/c)^{-1}(b/c)\pmod{n/c}$ is a solution to the equation. The
following holds.

\begin{proposition}\label{pro_from_chinese_remainder}
Let $s, t\geq 1$, $0\leq r\leq s-1$ and $d_1=\gcd(s, t, r)$. If
$r\neq 0$, then there exists an integer $k$, $0<k<st/d_1$, such that
$\gcd(k, t)=1$ and $k\equiv r/d_1\pmod{s/d_1}$. The graph $X(s, t,
r)$, with $r\neq 0$, is isomorphic to the graph $X(st/\gcd(s, r)$,
$\gcd(s, r)$, $r')$, where $r'\equiv\pm t(kd_1/\gcd(s,
r))^{-1}\pmod{st/\gcd(s,r)}$. The graph $X(s, t, 0)$ is isomorphic
to the graph $X(t, s, 0)$.
\end{proposition}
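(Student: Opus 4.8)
The plan is to treat the three assertions in turn, the engine being Propositions~\ref{pro_cayley} and~\ref{pro_circulant}. The key idea is that one and the same Cayley multigraph $Cay(\Gamma,\{\pm\gamma_1,\pm\gamma_2\})$ can be read as an $X$-graph in two different ways, according to which of the two generators is assigned the role of the fiber generator $\gamma_1$ of Proposition~\ref{pro_cayley}; the two readings will be precisely the two sides of the claimed isomorphism. So I would first dispose of the existence of $k$: for $s\geq 3$ and $t\geq 2$ this is exactly the statement already proved in Proposition~\ref{pro_circulant}, while the boundary cases are immediate, namely $t=1$ (then $d_1=1$ and $k=r$ works) and $s=2$ (then $r=1$, $d_1=1$, and $k=1$ works). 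With $k$ in hand I realize $X(s,t,r)$ as the Cayley graph $Cay(\Gamma,\{\pm\gamma_1,\pm\gamma_2\})$ with $\Gamma=\mathbb{Z}_{st/d_1}\times\mathbb{Z}_{d_1}$, $\gamma_1=(t/d_1,1)$ (the horizontal/fiber generator, of order $s$) and $\gamma_2=(k,0)$, as furnished by Proposition~\ref{pro_circulant}.

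Next I would re-present this graph by swapping the generators, that is, apply Proposition~\ref{pro_cayley} to $Cay(\Gamma,\{\pm\gamma_2,\pm\gamma_1\})$ with $\gamma_2=(k,0)$ now playing the role of the fiber generator. The new fiber length is $s'=o((k,0))=(st/d_1)/\gcd(st/d_1,k)$. Since $\gcd(k,t)=1$ (hence $\gcd(k,t/d_1)=1$) one has $\gcd(st/d_1,k)=\gcd(s,k)$, and from $k\equiv r/d_1\pmod{s/d_1}$ one gets $\gcd(s/d_1,k)=\gcd(s/d_1,r/d_1)=\gcd(s,r)/d_1$; a short prime-by-prime check (using that $p\mid d_1$ forces $p\mid t$ and hence $p\nmid k$) upgrades this to $\gcd(s,k)=\gcd(s,r)/d_1$. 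Therefore $s'=st/\gcd(s,r)$ and the new base length is $t'=|\Gamma|/s'=\gcd(s,r)$, as required.

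It then remains to identify the new shift $r'$, which by Proposition~\ref{pro_cayley} is the integer with $t'\gamma_2=r'\gamma_1$ in the swapped presentation, i.e.\ $t'(t/d_1,1)=r'(k,0)$. The second coordinate reads $\gcd(s,r)\equiv 0\pmod{d_1}$, which holds because $d_1\mid\gcd(s,r)$; the first coordinate is the congruence $r'k\equiv \gcd(s,r)\,t/d_1\pmod{st/d_1}$. Here I invoke the Chinese-Remainder consequence recalled before the statement with $a=k$, $b=\gcd(s,r)\,t/d_1$ and $n=st/d_1$: the relevant $\gcd$ is $\gcd(k,st/d_1)=\gcd(s,k)=\gcd(s,r)/d_1$, which divides $b$, so the congruence is solvable and its solution is $r'\equiv (kd_1/\gcd(s,r))^{-1}\,t\pmod{st/\gcd(s,r)}$. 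This is the claimed formula, the sign ambiguity being the freedom $\gamma_1\mapsto-\gamma_1$, equivalently the isomorphism $X(s',t',r')\cong X(s',t',s'-r')$ noted after Definition~\ref{defX}.

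Finally, for $r=0$ Proposition~\ref{pro_r0} identifies $X(s,t,0)$ with $Cay(\mathbb{Z}_s\times\mathbb{Z}_t,\{\pm(1,0),\pm(0,1)\})$, i.e.\ with the Cartesian product $C_s\,\square\,C_t$; the transposition $x^i_j\mapsto x^j_i$ interchanges the two cycle factors and is an isomorphism onto $X(t,s,0)$, which settles the last assertion. I expect the genuine difficulty to be arithmetic rather than conceptual: producing $k$ that is simultaneously coprime to $t$ and lies in the prescribed class modulo $s/d_1$ (this is where Lemmas~\ref{lemma_coprime}--\ref{lemma_coprime2} do the work), and then tracking the factor $d_1$ carefully through the order computation and the modular inversion so that the modulus collapses to exactly $st/\gcd(s,r)$ and the multiplier becomes exactly $kd_1/\gcd(s,r)$. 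The small cases $s=2$ and $t=1$, which lie outside the hypotheses of Proposition~\ref{pro_circulant}, must be handled separately, as indicated above.
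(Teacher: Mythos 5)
Your proposal is correct and takes essentially the same route as the paper's own proof: both represent $X(s,t,r)$ via Proposition~\ref{pro_circulant} as $Cay(\mathbb{Z}_{st/d_1}\times\mathbb{Z}_{d_1},\{\pm(t/d_1,1),\pm(k,0)\})$, swap the roles of the two generators in Proposition~\ref{pro_cayley}, compute the order of $(k,0)$ to obtain $s'=st/\gcd(s,r)$ and $t'=\gcd(s,r)$, and solve $\gcd(s,r)(t/d_1)\equiv r'k\pmod{st/d_1}$ by the stated Chinese Remainder consequence, with the sign ambiguity accounted for by $\gamma_1\mapsto-\gamma_1$. Your separate handling of the boundary cases ($t=1$, $s=2$, and the $r=0$ transposition $x^i_j\mapsto x^j_i$) corresponds to the paper's appeal to Proposition~\ref{pro_r0} for the remaining parameter values, and your explicit second-coordinate and $\gcd$ verifications only flesh out steps the paper leaves implicit.
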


\begin{proof} We prove the assertion for $s\geq 3$, $t\geq 2$ and
$0< r\leq s-1$. The existence of the integer $k$ follows from
Proposition \ref{pro_circulant}. By the same proposition, we can
represent the graph $X(s, t, r)$ as $Cay(\mathbb Z_{st/d1}\times
\mathbb Z_{d_1}, \{\pm (t/d_1, 1), \pm(k, 0)\})$. We apply
Proposition \ref{pro_cayley} by setting $\Gamma=\mathbb
Z_{st/d1}\times \mathbb Z_{d_1}$, $\gamma_1=(k, 0)$ and
$\gamma_2=(t/d_1, 1)$. Note that $\gcd(st/d_1, k)$$=\gcd(s/d_1,
k)$$=\gcd(s, r)/d_1$, as $k$ is coprime with $t$ and $k\equiv
r/d_1\pmod{s/d_1}$. Whence the element $(k, 0)$ has order
$s'=st/(d_1\gcd(st/d_1,k))=st/\gcd(s, r)$ and $t'=|\Gamma/\langle
(k, 0)\rangle|=\gcd(s, r)$. By Proposition \ref{pro_cayley},
$\gcd(s, r)(t/d_1, 1)=r'(k, 0)$ for some integer $r'$, $1\leq r'\leq
st/\gcd(s, r)$. The integer $r'$ is a solution to the equation
$\gcd(s, r)(t/d_1)\equiv r'k\pmod{st/d_1}$. By the Chinese Remainder
Theorem, $r'\equiv t(kd_1/\gcd(s, r))^{-1}\pmod{st/\gcd(s, r)}$. An
easy calculation shows that $s'-r'\equiv -t(kd_1/\gcd(s,
r))^{-1}$$\pmod{st/\gcd(s,r)}$. It is straightforward to see that
$X(s, t, r)$ and $X(s', t', r')$, $X(s'$$, t'$$, s'-r')$ are
isomorphic. Hence the assertion follows. For the remaining values of
$s$, $t$, $r$, we represent the graph $X(s, t, r)$ as the Cayley
multigraph in Proposition \ref{pro_r0} and use Proposition
\ref{pro_cayley}. Note: if $r\neq 0$, then $k=r$; if $r=0$, then set
$\gamma_1=(0, 1)$, $\gamma_2=(1, 0)$ and apply Proposition
\ref{pro_cayley}.\end{proof}

\subsection{Fundamental $2$-factorization of $X(s, t,
r)$.}\label{sec_fundamental}

From the definition of $X(s, t, r)$ one can see that the horizontal
edges form a $2$-factor (the \emph{red $2$-factor}) whose
complementary $2$-factor in $X(s, t, r)$ is given by the vertical
and diagonal edges (the \emph{blue $2$-factor}). We say that the red
and blue $2$-factor constitute the \emph{fundamental
$2$-factorization} of $X(s, t, r)$. A graph $X(s, t, r)$ can be
represented as a Cayley multigraph $Cay(\Gamma,$ $\{\pm\gamma_1,
\pm\gamma_2\})$, where $\Gamma$ and $\{\pm\gamma_1, \pm\gamma_2\}$
can be defined as in Proposition \ref{pro_r0} or
\ref{pro_circulant}. From the proof of the propositions, one can see
that the set of horizontal edges of $X(s, t, r)$ is the set $\{[x,
x\pm\gamma_1] : x\in\Gamma\}$, the set of vertical and diagonal
edges is the set $\{[x, x\pm\gamma_2] : x\in\Gamma\}$. The edges in
$\{[x, x\pm\gamma_1] : x\in\Gamma\}$ will be called the
\emph{$\gamma_1$-edges}, the edges in the set $\{[x, x\pm\gamma_2] :
x\in\Gamma\}$ will be called the \emph{$\gamma_2$-edges}. The
following result holds.

\begin{proposition}\label{pro_red_blu_2factor}
The red $2$-factor of $X(s, t, r)$ has exactly $t$ cycles of length
$s$ consisting of $\gamma_1$-edges. The blue $2$-factor of $X(s, t,
r)$ has exactly $\gcd(s, r)$ cycles of length $st/\gcd(s, r)$
consisting of $\gamma_2$-edges.
\end{proposition}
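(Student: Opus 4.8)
The plan is to treat the two $2$-factors separately, since the red one is immediate while the blue one carries all the work. For the red $2$-factor, I would fix a level $i$ and observe that the horizontal edges $[x^i_j, x^i_{j+1}]$, $0\le j\le s-1$, together with the identification of subscripts modulo $s$, form a single cycle through the $s$ vertices $x^i_0, x^i_1, \ldots, x^i_{s-1}$. Since the levels $i=0,\ldots,t-1$ partition the vertex set and no horizontal edge joins distinct levels, these are exactly $t$ vertex-disjoint cycles, each of length $s$, and together they use precisely the $\gamma_1$-edges. This settles the first assertion.

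For the blue $2$-factor, I would trace a single cycle. Starting at a vertex $x^0_{j_0}$ and following blue edges, the vertical edges carry us up one column, $x^0_{j_0}\to x^1_{j_0}\to\cdots\to x^{t-1}_{j_0}$, after which the unique diagonal edge leaving $x^{t-1}_{j_0}$ lands at $x^0_{j_0+r}$. Thus one full pass through a column uses exactly $t$ blue edges (namely $t-1$ vertical edges and one diagonal edge) and advances the subscript by $r$ modulo $s$ while returning to level $0$. Iterating, after $m$ passes we reach $x^0_{j_0+mr}$, and the walk first closes up when $mr\equiv 0\pmod s$, i.e. at the least positive $m=s/\gcd(s,r)$.

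I would then read off both numbers. The cycle just described has length $m\cdot t = st/\gcd(s,r)$, which is the claimed common cycle length. Since the blue $2$-factor spans all $st$ vertices and each of its cycles has this same length, the number of cycles is $st/(st/\gcd(s,r))=\gcd(s,r)$, as required. As an independent check, one can invoke the Cayley representation: by Propositions~\ref{pro_r0} and~\ref{pro_circulant} the blue edges are exactly the $\gamma_2$-edges $[x,x\pm\gamma_2]$, so they decompose into the cosets of $\langle\gamma_2\rangle$ in $\Gamma$; the order of $\gamma_2$ was computed to be $st/\gcd(s,r)$ in the proof of Proposition~\ref{pro_from_chinese_remainder}, and the number of cosets is correspondingly $\gcd(s,r)$, matching the combinatorial count.

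The main obstacle is the bookkeeping in the blue case, specifically verifying that each pass through a column advances the subscript by exactly $r$ and that the closing condition is governed by $\gcd(s,r)$ rather than by some finer interaction with the column length $t$. Once it is clear that the subscript shift per column is $r$ independently of $t$, both the length and the count follow immediately and uniformly, with no case analysis on the parameters; the formula also specializes correctly at $t=1$, where each pass is a single diagonal edge.
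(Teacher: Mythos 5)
Your proof is correct, but it takes a genuinely different route from the paper's on the blue half of the statement. The red count is handled the same way in both (the horizontal edges at each level $i$ visibly form one $s$-cycle, giving $t$ disjoint cycles). For the blue $2$-factor, however, the paper does not trace anything: it invokes the isomorphism of Proposition~\ref{pro_from_chinese_remainder}, under which $X(s,t,r)\cong X(st/\gcd(s,r),\gcd(s,r),r')$ with the blue $2$-factor of the former corresponding to the red $2$-factor of the latter, so the blue count is read off from the already-proved red case. You instead argue directly in $X(s,t,r)$: each pass up a column consumes $t-1$ vertical edges plus one diagonal edge and shifts the subscript by $r$ modulo $s$, the walk first closes after $m=s/\gcd(s,r)$ passes, each blue cycle therefore has length $mt=st/\gcd(s,r)$, and since every blue component meets level $0$ and the same trace applies there, all cycles share this length, forcing exactly $\gcd(s,r)$ of them. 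Your argument is more elementary and self-contained --- it avoids any dependence on Proposition~\ref{pro_from_chinese_remainder} and the Cayley-representation machinery behind it (your coset computation with the order of $\gamma_2$ appears only as an optional cross-check, and it matches the paper's computation of the order of $(k,0)$), and it handles the degenerate cases $t=1$ and $r=0$ uniformly. What the paper's approach buys is brevity and consolidation: all the modular bookkeeping lives once in the isomorphism propositions, and the blue case becomes a one-line corollary of the red case, at the cost of leaning on a heavier prior result.
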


\begin{proof} It is straightforward to see that the red $2$-factor
has $t$ horizontal cycles of length $s$ (if $s=1$, then each cycle
is a loop; if $s=2$, then each cycle is a dipole with $2$ parallel
edges). By the previous remarks, each cycle consists of
$\gamma_1$-edges. The blue $2$-factor of $X(s, t, r)$ corresponds to
the red $2$-factor of the graph $X(st/\gcd(s, r), \gcd(s, r), r')$
in Proposition \ref{pro_from_chinese_remainder}. Hence it has
$\gcd(s, r)$ cycles of length $st/\gcd(s, r)$ consisting of
$\gamma_2$-edges.\end{proof}

\subsection{Isomorphisms between $X(s, t, r)$
graphs.}\label{sec_isoX}

We wonder whether two graphs $X(s, t, r)$ and $X(s', t', r')$ are
isomorphic. Our question is connected to the following well-known
problem \cite{DeFaMa}: given two isomorphic Cayley multigraphs
$Cay(\Gamma, S)$, $Cay(\Gamma', S')$, determine whether the groups
$\Gamma$, $\Gamma'$ are necessarily A-isomorphic, that is, there
exists an isomorphism $\tau$ between $\Gamma$ and $\Gamma'$ that
sends the set $S$ onto the set $S'$ ($\tau$ is called an
\emph{A-isomorphism} and is denoted by $\tau:(\Gamma, S)\to
(\Gamma', S')$). Ad\'am \cite{Adam} considered this problem for
circulant graphs and formulated a well-known conjecture which was
disproved in \cite{ElTu}. He conjectured that two circulant graphs
$Cir(n; S)$, $Cir(n; S')$ are isomorphic if and only if there exists
an integer $m'\in\mathbb Z_n$, $\gcd(m', n)=1$, such that $S'=\{m'x:
x\in S\}$. Even though the conjecture was disproved, there are some
circulant graphs that verify it (see for instance \cite{Mu}). In
\cite{DeFaMa} the problem is studied for Cayley multigraphs of
degree $4$ which are associated to abelian groups. The results in
\cite{DeFaMa} are described in terms of Ad\'am isomorphisms. We
recall that an \emph{Ad\'am isomorphism} between the graphs
$Cay(\Gamma, S)$, $Cay(\Gamma, S')$ is an isomorphism of type
$f_{\sigma'}\cdot\tau$, where $\tau:(\Gamma, S)\to (\Gamma', S')$ is
an A-isomorphism  and $f_{\sigma'}$ is the automorphism of the graph
$Cay(\Gamma', S')$ defined by $f_{\sigma'}(x')=\sigma'+x'$ for every
vertex $x'\in\Gamma'$. Since the graphs $X(s, t, r)$ can be
represented as Cayley multigraphs, we can extend the notion of
Ad\'am isomorphism to the graphs $X(s, t, r)$. We will say that the
graphs $X(s, t, r)$, $X(s', t', r')$ are \emph{Ad\'am isomorphic} if
the corresponding Cayley multigraphs $Cay(\Gamma, \{\pm\gamma_1,
\pm\gamma_2\})$, $Cay(\Gamma', \{\pm\gamma'_1, \pm\gamma'_2\})$,
respectively, are Ad\'am isomorphic ($Cay(\Gamma, \{\pm\gamma_1,
\pm\gamma_2\})$, $Cay(\Gamma', \{\pm\gamma'_1, \pm\gamma'_2\})$ are
described in Proposition \ref{pro_r0} or \ref{pro_circulant}). The
following statements hold.

\begin{proposition}\label{pro_adam_iso1}
Every Ad\'am isomorphism between the graphs $X(s, t, r)$, $X(s', t',
r')$ sends the fundamental $2$-factorization of $X(s, t, r)$ onto
the fundamental $2$-factorization of $X(s', t', r')$.
\end{proposition}

\begin{proof} An Ad\'am isomorphism $f_{\sigma'}\cdot\tau$ between
the graphs $Cay(\Gamma$, $\{\pm\gamma_1$, $\pm\gamma_2\})$,
$Cay(\Gamma'$, $\{\pm\gamma'_1$, $\pm\gamma'_2\})$ sends a
$\gamma_i$-edge, $i=1, 2$, of $Cay(\Gamma, \{\pm\gamma_1,
\pm\gamma_2\})$ onto a $\tau(\gamma_i)$-edge of $Cay(\Gamma'$,
$\{\pm\gamma'_1$, $\pm\gamma'_2\})$, where
$\tau(\gamma_i)\in\{\pm\gamma'_1, \pm\gamma'_2\}$. Since Proposition
\ref{pro_red_blu_2factor} holds, every Ad\'am isomorphism sends the
red (respectively, the blue) $2$-factor of $X(s, t, r)$ onto the red
(respectively, the blue) $2$-factor of $X(s', t', r')$ or vice
versa.\end{proof}

\begin{proposition}\label{pro_adam_iso2}
Let $s, t\geq 1$, $0\leq r\leq s-1$ and $\gcd(s, t, r)=d_1$. If
$r\neq 0$ then there exists an integer $k$, $0<k<st/d_1$, such that
$\gcd(k, t)=1$ and $k\equiv r/d_1$$\pmod{s/d_1}$. The graphs $X(s,
t, r)$, with $r\neq 0$, and $X(s', t', r')$ are Ad\'am isomorphic if
and only if $s'=s$, $t'=t$, $r'=s-r$ or $s'=st/\gcd(s, r)$,
$t'=\gcd(s, r)$ and $r'\equiv\pm t(kd_1/\gcd(s,
r))^{-1}$$\pmod{st/\gcd(s, r)}$. The graphs  $X(s, t, 0)$, and
$X(s', t', r')$ are Ad\'am isomorphic if and only if $s'=t$, $t'=s$
and $r'\equiv 0\pmod t$.
\end{proposition}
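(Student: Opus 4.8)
The plan is to reduce the classification to a short case analysis governed by how an Ad\'am isomorphism permutes the two colour classes of the fundamental factorization, and then to read off $s'$, $t'$ and $r'$ from cycle counts together with the defining relation $t\gamma_2=r\gamma_1$. The existence of the integer $k$ with $\gcd(k,t)=1$ and $k\equiv r/d_1\pmod{s/d_1}$ has already been established in Propositions~\ref{pro_circulant} and \ref{pro_from_chinese_remainder}, so I would merely recall it.

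For the forward implication, suppose an Ad\'am isomorphism $f_{\sigma'}\cdot\tau$ carries $X(s,t,r)$ onto $X(s',t',r')$, where $\tau\colon(\Gamma,\{\pm\gamma_1,\pm\gamma_2\})\to(\Gamma',\{\pm\gamma_1',\pm\gamma_2'\})$ is an A-isomorphism. By Proposition~\ref{pro_adam_iso1} it maps the fundamental $2$-factorization onto the fundamental $2$-factorization, so there are exactly two possibilities: Case~A, in which red is sent to red and blue to blue, so that $\tau(\gamma_1)=\pm\gamma_1'$ and $\tau(\gamma_2)=\pm\gamma_2'$; and Case~B, in which the colours are interchanged, so that $\tau(\gamma_1)=\pm\gamma_2'$ and $\tau(\gamma_2)=\pm\gamma_1'$. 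In either case I would invoke Proposition~\ref{pro_red_blu_2factor} and match the numbers and the common lengths of the monochromatic cycles. This forces $s'=s$ and $t'=t$ in Case~A, and $s'=st/\gcd(s,r)$, $t'=\gcd(s,r)$ in Case~B.

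To pin down $r'$ I would apply $\tau$ to the relation $t\gamma_2=r\gamma_1$ and compare it with the relation $t'\gamma_2'=r'\gamma_1'$ holding in $\Gamma'$. In Case~A this gives $r'\gamma_1'=\pm r\gamma_1'$, hence $r'\equiv\pm r\pmod{s}$; since $X(s,t,r)$ and $X(s,t,s-r)$ are isomorphic, this is recorded as $r'=s-r$. In Case~B the same substitution yields a linear congruence for $r'$ whose solution is precisely the one obtained in Proposition~\ref{pro_from_chinese_remainder} through the Chinese Remainder Theorem, namely $r'\equiv\pm t(kd_1/\gcd(s,r))^{-1}\pmod{st/\gcd(s,r)}$. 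For the reverse implication I would exhibit the required A-isomorphisms directly: in Case~A the negation $x\mapsto -x$ is an A-automorphism that exchanges the representations $X(s,t,r)$ and $X(s,t,s-r)$, while in Case~B Proposition~\ref{pro_from_chinese_remainder} shows that $X(st/\gcd(s,r),\gcd(s,r),r')$ is nothing but the same Cayley multigraph $Cay(\Gamma,\{\pm\gamma_1,\pm\gamma_2\})$ with the roles of $\gamma_1$ and $\gamma_2$ interchanged, so the identity is an A-isomorphism. The degenerate case $r=0$ I would treat separately from the representation $X(s,t,0)=Cay(\mathbb{Z}_s\times\mathbb{Z}_t,\{\pm(1,0),\pm(0,1)\})$ of Proposition~\ref{pro_r0}: here $\gcd(s,r)=s$, both colour classes are grids, and the colour swap yields $s'=t$, $t'=s$, $r'\equiv 0\pmod{t}$.

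The step I expect to be the main obstacle is Case~B of the forward implication: one has to keep careful track of the orders of $\gamma_1'$ and $\gamma_2'$ (which are $s'$ and $s$, with $\gcd(s',r')=t$) and of the two sign choices, and then check that the resulting congruence for $r'$ coincides, up to the reflection $r'\mapsto s'-r'$ absorbed by the $\pm$, with the value computed in Proposition~\ref{pro_from_chinese_remainder}. Once this bookkeeping is settled the equivalence is complete, and Proposition~\ref{pro_adam_iso1} guarantees that no Ad\'am isomorphism can fall outside Cases~A and~B.
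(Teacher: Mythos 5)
Your argument is correct and arrives at the same two-case dichotomy as the paper, but by a genuinely different mechanism in the forward direction. The paper's proof never invokes Propositions~\ref{pro_adam_iso1} or \ref{pro_red_blu_2factor}: it uses $\tau$ to identify $\Gamma'$ with $\mathbb{Z}_{st/d_1}\times\mathbb{Z}_{d_1}$ and then explicitly classifies the elements $(a,b)$ of order $s$ (namely $(m't/d_1,b)$ with $\gcd(m',s)=1$) and of order $st/\gcd(s,r)$ (those with $\gcd(st/d_1,a)=\gcd(s/d_1,k)$), thereby enumerating all possible connection sets $\{\pm\gamma_1',\pm\gamma_2'\}$ inside one concrete group, and finally reads off $(s',t',r')$ by re-applying Propositions~\ref{pro_cayley} and \ref{pro_from_chinese_remainder}. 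You instead force $s'$ and $t'$ at the graph level, from colour preservation (Proposition~\ref{pro_adam_iso1}) together with the monochromatic cycle counts of Proposition~\ref{pro_red_blu_2factor} --- equivalently, from the fact that $\tau$ preserves element orders --- which is shorter and makes the Case~A/Case~B structure transparent; what the paper's element-order enumeration buys is an explicit description of every admissible image connection set, which it reuses later (e.g.\ in Corollary~\ref{pro_cir_4n}). Both proofs then extract $r'$ from the same Chinese Remainder computation, and your explicit converse (negation for Case~A; the identity A-isomorphism between two representations of one Cayley multigraph, via Proposition~\ref{pro_from_chinese_remainder}, for Case~B) together with the separate $r=0$ case via Proposition~\ref{pro_r0} matches the paper, where the converse is left largely implicit.

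One point of bookkeeping in your Case~B deserves emphasis, since it is exactly the obstacle you flagged: pin down $r'$ by pulling the defining equation $t'\gamma_2'=r'\gamma_1'$ back through $\tau$ to a genuine group equation $\gcd(s,r)\,(t/d_1,1)=\pm r'(k,0)$ in $\mathbb{Z}_{st/d_1}\times\mathbb{Z}_{d_1}$, i.e.\ a congruence modulo $st/d_1$, not merely modulo $s'$. If you instead substitute into $t\gamma_2=r\gamma_1$ and reduce modulo $o(\gamma_1')=s'$, you obtain $\bigl(r/\gcd(s,r)\bigr)r'\equiv\pm t\pmod{s'}$, and $r/\gcd(s,r)$ need not be invertible modulo $s'$ (take $s=9$, $t=2$, $r=6$: then $r/\gcd(s,r)=2$ and $s'=6$), so that congruence alone does not determine $r'$ uniquely. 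Working with $k$ (chosen coprime to $t$) and the full-group equation, exactly as in the proof of Proposition~\ref{pro_from_chinese_remainder}, gives the unique solution $r'\equiv\pm t(kd_1/\gcd(s,r))^{-1}\pmod{st/\gcd(s,r)}$; your deference to that proposition's CRT computation is therefore the right move, but the write-up should state explicitly that the comparison happens in the group of order $st/d_1$ rather than modulo $s'$.
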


\begin{proof} We prove the assertion for $s\geq 3$, $t\geq 2$ and
$r\neq 0$. The graph $X(s, t, r)$ is the Cayley graph $Cay(\mathbb
Z_{st/d_1}\times\mathbb Z_{d_1}, \{\pm (t/d_1, 1), \pm (k, 0)\})$,
since Proposition \ref{pro_circulant} holds. By Proposition
\ref{pro_r0} or \ref{pro_circulant}, we can represent
$X(s',$$t'$,$r')$ as the Cayley multigraph $Cay(\Gamma',
\{\pm\gamma'_1, \pm\gamma'_2\})$. The graphs $X(s, t, r)$, $X(s',
t', r')$ are Ad\'am isomorphic if and only if there exists an
isomorphism $\tau$ between the groups $\mathbb
Z_{st/d_1}\times\mathbb Z_{d_1}$ and $\Gamma'$ that sends the set
$\{\pm (t/d_1, 1), \pm (k, 0)\}$ onto the set $\{\pm\gamma'_1,
\pm\gamma'_2\}$. Without loss of generality, we can set
$\{\pm\gamma'_1\}$$=\{\pm\tau((t/d_1, 1))\}$ and
$\{\pm\gamma'_2\}$$=\{\pm\tau((k, 0))\}$. By the existence of $\tau$
we can identify the group $\Gamma'$ with the group $\mathbb
Z_{st/d_1}\times\mathbb Z_{d_1}$. Hence $\gamma'_1$ and $\gamma'_2$
are elements of $Z_{st/d_1}\times\mathbb Z_{d_1}$ of order $s$ and
$st/\gcd(s, r)$, respectively, since $(t/d_1, 1)$ and $(k, 0)$ have
order $s$ and $st/\gcd(s, r)$, respectively (see the proof of
Proposition \ref{pro_circulant} and
\ref{pro_from_chinese_remainder}). It is an easy matter to prove
that an element $(a, b)\in\mathbb Z_{st/d_1}\times\mathbb Z_{d_1}$
has order $o(a)\cdot o(b)/\gcd(o(a), o(b))=st/(d_1\gcd(st/d_1, a))$,
since $d_1$ is a divisor of $s$ and $t$. Hence $(a, b)$ has order
$s$ if and only if $\gcd(st/d_1, a)=t/d_1$, i.e., $(a, b)=(m't/d_1,
b)$ where $m'\in\mathbb Z_{st/d_1}$, $\gcd(m', s)=1$, $b$ is an
arbitrary element of $\mathbb Z_{d_1}$. The element $(a, b)$ has
order $st/\gcd(s, r)$ if and only if $\gcd(st/d_1, a)=\gcd(s,
r)/d_1=\gcd(s/d_1, k)$, since $k$ is coprime with $t$ and $k\equiv
r/d_1\pmod{s/d_1}$. Hence $Cay(\Gamma', \{\pm\gamma'_1,
\pm\gamma'_2\})$ is a graph of type $Cay(\mathbb
Z_{st/d_1}\times\mathbb Z_{d_1}, \{\pm (m't/d_1, b), \pm(a, b')\})$,
where $\gcd(m', s)=1$, $\gcd(st/d_1, a)=\gcd(s/d_1, k)$, $b$ and
$b'$ are suitable elements of $\mathbb Z_{d_1}$. Note that $a$ is
coprime with $t$ and the relation $ta\equiv rm't/d_1\pmod{st/d_1}$
holds, since $\tau$ is an isomorphism and $tk\equiv
rt/d_1\pmod{st/d_1}$. If we apply Proposition \ref{pro_cayley} to
the graph $G_1=$$Cay(\mathbb Z_{st/d_1}\times\mathbb Z_{d_1}$,
$\{\pm (m't/d_1$$, b)$$, \pm(a$$, b')\})$ by setting
$\gamma_1=(m't/d_1, b)$ (or $\gamma_1=-(m't/d_1, b)$), then $G_1$
can be represented as the graph $X(s, t, r)$ or $X(s, t, s-r)$. The
graph $X(s, t, r)$ is isomorphic to the graph $G_2=X(s', t', r')$,
where $s'=st/\gcd(s, r)$, $t'=\gcd(s, r)$, $r'\equiv\pm
t(kd_1/\gcd(s, r))^{-1}$$\pmod{st/\gcd(s, r)}$, since Proposition
\ref{pro_from_chinese_remainder} holds. Hence $G_1$ is isomorphic to
$G_2$. The isomorphism between $G_1$ and $G_2$ can be obtained also
by applying Proposition \ref{pro_from_chinese_remainder} to the
graph $G_1$. For the remaining values of $s$, $t$, $r$ we represent
the graph $X(s, t, r)$ as the Cayley multigraph in Proposition
\ref{pro_r0} and apply the previous method.\end{proof}



The results that follow are based on the following theorem of
\cite{DeFaMa}.

\begin{theorem}[(\cite{DeFaMa})]\label{th_delrome1}
Any two finite isomorphic connected undirected Cayley multigraphs of
degree $4$ coming from abelian groups are Ad\'am isomorphic, unless
they are obtained with the groups and sets $\mathbb Z_{4n}$, $\{\pm
1, \pm (2n-1)\}$ and $\mathbb Z_{2n}\times\mathbb Z_2$, $\{\pm
(1,0), \pm (1, 1)\}$.
\end{theorem}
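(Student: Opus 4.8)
The plan is to recognize each graph as a quotient of the standard integer grid and then argue that, away from a short list of degeneracies, the resulting toroidal structure is rigid. Concretely, since $\Gamma$ is abelian and $S=\{\pm\gamma_1,\pm\gamma_2\}$ generates it, there is a surjection $\mathbb{Z}^2\to\Gamma$, $(a,b)\mapsto a\gamma_1+b\gamma_2$, whose kernel is a finite-index lattice $L\leq\mathbb{Z}^2$; thus $\Gamma\cong\mathbb{Z}^2/L$ and $Cay(\Gamma,S)$ is the quotient of the infinite $4$-valent grid graph (with generators $e_1,e_2$) by the translation action of $L$. The same applies to $Cay(\Gamma',S')$ with a lattice $L'$. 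An Ad\'am isomorphism is exactly a graph isomorphism induced by an element of the signed-permutation group $D_4$ acting on $\mathbb{Z}^2$ (swapping or negating $e_1,e_2$) together with a translation; so the theorem asserts that every graph isomorphism is of this form, with one exceptional family. I would therefore try to show that any graph isomorphism $\phi\colon Cay(\Gamma,S)\to Cay(\Gamma',S')$ carries the canonical grid structure of the domain to that of the codomain up to the $D_4$-symmetry.

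The engine of the generic argument is the set of $4$-cycles. In the grid quotient the ``unit squares'' $\{x,x+\gamma_1,x+\gamma_1+\gamma_2,x+\gamma_2\}$ are always $4$-cycles, arising from the commutator relation, and they tile a torus on which the graph embeds with quadrilateral faces (this is precisely the embedding underlying the fundamental $2$-factorization of Proposition~\ref{pro_red_blu_2factor}). In the absence of any other $4$-cycles these squares are intrinsically defined, so $\phi$ must send squares to squares; the two parallel classes of edges, namely the $\gamma_1$-cycles and the $\gamma_2$-cycles of the fundamental $2$-factorization, are then recovered from the graph alone, and $\phi$ must match them up to interchange and reversal. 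Reading this back through the identifications $\Gamma\cong\mathbb{Z}^2/L$, $\Gamma'\cong\mathbb{Z}^2/L'$ shows that $\phi$ differs from a $D_4$-map by a translation, i.e.\ it is Ad\'am, and in particular $L'=DL$ for some $D\in D_4$, so that $\tau$ sending $\gamma_i$ to the matching $\gamma'_j$ is a group isomorphism with $\tau(S)=S'$.

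The work then reduces to cataloguing exactly when extra short cycles appear, since these are the only configurations that can destroy the rigidity of the square tiling. I would go through the possibilities systematically: loops and multiple edges ($o(\gamma_i)\in\{1,2\}$), triangles and extra squares forced by relations such as $2\gamma_1=0$, $3\gamma_i=0$, $2\gamma_1=\pm2\gamma_2$, or $\gamma_1=\pm3\gamma_2$, and the small values of the orders that in the language of $X(s,t,r)$ correspond to $s\leq 2$, $t\leq 2$ or $r\in\{0,1,s-1\}$. In each such case one checks directly that either the fundamental $2$-factorization is still the unique $2$-factorization into cycles of the appropriate lengths, so the previous argument still applies, or the graph is small enough to analyze by hand. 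The single family in which rigidity genuinely fails is the one singled out in the statement: one verifies by an explicit vertex bijection that $Cay(\mathbb{Z}_{4n},\{\pm1,\pm(2n-1)\})$ and $Cay(\mathbb{Z}_{2n}\times\mathbb{Z}_2,\{\pm(1,0),\pm(1,1)\})$ are isomorphic graphs, while no A-isomorphism can exist because the underlying groups $\mathbb{Z}_{4n}$ and $\mathbb{Z}_{2n}\times\mathbb{Z}_2$ are not even isomorphic.

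I expect the main obstacle to be precisely this exhaustive analysis of degenerate short-cycle configurations: the generic toroidal-rigidity argument is clean, but establishing that the \emph{only} obstruction to Ad\'am-ness is the stated $\mathbb{Z}_{4n}$ versus $\mathbb{Z}_{2n}\times\mathbb{Z}_2$ family requires a complete and careful enumeration of the relations that create additional $4$-cycles, and of the resulting coincidences between distinct lattice presentations. Missing one such relation would leave a gap, so the bookkeeping, rather than any single hard idea, is the technical heart of the proof, which is why for the present purposes we simply invoke \cite{DeFaMa}.
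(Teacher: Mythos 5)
First, a point of comparison with the source: the paper does not prove Theorem~\ref{th_delrome1} at all --- it is imported verbatim from \cite{DeFaMa}, and the citation is its entire justification. Since your last sentence also ends by invoking \cite{DeFaMa}, your proposal formally matches the paper's treatment; everything before that is a reconstruction sketch of the cited proof, and must be judged as such. Your handling of the exceptional pair is correct and complete: the two graphs are indeed isomorphic (the paper itself exploits exactly this in Corollary~\ref{pro_cir_4n}, via $X(4n,1,2n-1)\cong X(2n,2,2)$), and no Ad\'am isomorphism can exist because an Ad\'am isomorphism requires an A-isomorphism of the groups, while $\mathbb{Z}_{4n}$ is cyclic and $\mathbb{Z}_{2n}\times\mathbb{Z}_2$ never is. Consistently with your $4$-cycle heuristic, the relation $1+1+(2n-1)+(2n-1)=4n\equiv 0$ produces quadrilaterals other than the commutator squares, which is precisely where toroidal rigidity breaks.

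As a standalone proof, however, the sketch has one conceptual overclaim and one genuine gap. The overclaim: you assert that ``the theorem asserts that every graph isomorphism is of this form,'' i.e.\ a $D_4$-map plus a translation. It asserts no such thing --- only that \emph{some} Ad\'am isomorphism exists whenever some isomorphism does. The distinction is substantive: $Cir(5;\pm 1,\pm 2)=K_5$ has automorphism group $S_5$, vastly larger than the affine group, and the multigraph cases with $o(\gamma_i)\le 2$ likewise admit many non-affine isomorphisms, yet all of these lie on the Ad\'am side of the theorem. Consequently the square-rigidity engine cannot run uniformly: extra $4$-cycles beyond the commutator squares arise in infinitely many \emph{non-exceptional} families (relations such as $2\gamma_1=\pm 2\gamma_2$, $\gamma_1=\pm 3\gamma_2$, involutions, loops), and in each of these one must \emph{construct} an Ad\'am isomorphism by other means rather than show that all isomorphisms respect the grid. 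This exposes the gap: your closing dichotomy --- either the fundamental $2$-factorization is still intrinsically determined, or ``the graph is small enough to analyze by hand'' --- is false as stated, since the degenerate families are infinite one- and two-parameter families, not finitely many small graphs. The exhaustive analysis of those families, together with the verification that the $\mathbb{Z}_{4n}$ versus $\mathbb{Z}_{2n}\times\mathbb{Z}_2$ pair is the \emph{only} failure, is not a residual check but the entire content of \cite{DeFaMa}; you correctly identify it as the technical heart, but it remains undone in your proposal, which is why the citation is doing all of the logical work --- exactly as it does in the paper.
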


By Theorem \ref{th_delrome1} the existence of an isomorphism between
two Cayley multigraphs of degree $4$, that are associated to abelian
groups, implies the existence of an Ad\'am isomorphism, unless they
are the graphs $Cir(4n; \pm 1, \pm (2n-1))$ and $Cay(\mathbb
Z_{2n}\times\mathbb Z_2$, $\{\pm (1,0)$, $\pm (1,1)\})$. The
following statement are a consequence of Theorem \ref{th_delrome1}.

\begin{corollary}\label{pro_cir_4n}
The graphs $X(4n, 1, 2n-1)$ and $X(s', t', r')$ are isomorphic if
and only if $s'=4n$, $t'=1$, $r'=2n+1$ or $s'=2n$, $t'=2$,
$r'\in\{2, 2n-2\}$. Moreover, there is no isomorphism between $X(4n,
1, 2n-1)$ and $X(2n, 2, 2)$ that sends the fundamental
$2$-factorization of $X(4n, 1, 2n-1)$ onto the fundamental
$2$-factorization of $X(2n, 2, 2)$.
\end{corollary}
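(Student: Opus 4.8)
The plan is to apply Theorem~\ref{th_delrome1} to the specific graph $X(4n,1,2n-1)$. First I would identify its Cayley representation: by Proposition~\ref{pro_r0}, the graph $X(s,1,r)$ is the circulant multigraph $Cir(s;\pm 1,\pm r)$, so $X(4n,1,2n-1)=Cir(4n;\pm 1,\pm(2n-1))=Cay(\mathbb Z_{4n},\{\pm 1,\pm(2n-1)\})$. This is precisely the left-hand exceptional graph in Theorem~\ref{th_delrome1}. Hence, among connected degree-$4$ Cayley multigraphs over abelian groups, the only graph isomorphic to $X(4n,1,2n-1)$ that need \emph{not} be Ad\'am isomorphic to it is $Cay(\mathbb Z_{2n}\times\mathbb Z_2,\{\pm(1,0),\pm(1,1)\})$; every other isomorphic graph is in fact Ad\'am isomorphic. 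By Proposition~\ref{pro_r0}, the graph $Cay(\mathbb Z_{2n}\times\mathbb Z_2,\{\pm(1,0),\pm(1,1)\})$ is (after identifying generators) the graph $X(2n,2,2)$: here $\gamma_1=(1,0)$ has order $2n=s$, the quotient has order $2=t$, and $2\gamma_1'=2(1,1)=(2,0)=2(1,0)$ forces $r=2$.

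Next I would use Proposition~\ref{pro_adam_iso2} to enumerate all $X(s',t',r')$ that are Ad\'am isomorphic to $X(4n,1,2n-1)$. With $s=4n$, $t=1$, $r=2n-1$ we have $d_1=\gcd(4n,1,2n-1)=1$ and $\gcd(s,r)=\gcd(4n,2n-1)=1$ (since $2n-1$ is odd and coprime to $2n$). The proposition then yields two possibilities: either $s'=s=4n$, $t'=t=1$, $r'=s-r=2n+1$; or $s'=st/\gcd(s,r)=4n$, $t'=\gcd(s,r)=1$, which collapses to the same parameter set. Taking $k=r=2n-1$ (valid since $t=1$) and computing $r'\equiv\pm t(kd_1/\gcd(s,r))^{-1}\equiv\pm(2n-1)^{-1}\pmod{4n}$ recovers $r'\in\{2n-1,2n+1\}$ in the $(4n,1)$ family. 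Combining the Ad\'am-isomorphic list with the single exceptional graph $X(2n,2,2)$, and translating the $X(2n,2,2)$ parameters through the known isomorphism $X(s,t,r)\cong X(s,t,s-r)$ (giving $r'\in\{2,2n-2\}$), produces exactly the stated list: $s'=4n,t'=1,r'=2n+1$ or $s'=2n,t'=2,r'\in\{2,2n-2\}$.

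For the final sentence, I would invoke Proposition~\ref{pro_adam_iso1}: every Ad\'am isomorphism carries the fundamental $2$-factorization to the fundamental $2$-factorization. Since the exceptional pair in Theorem~\ref{th_delrome1} is precisely the one case where an isomorphism exists that is \emph{not} Ad\'am, any isomorphism between $X(4n,1,2n-1)$ and $X(2n,2,2)$ must be non-Ad\'am, and hence cannot be guaranteed to respect the fundamental factorization. To make this rigorous rather than merely suggestive, I would compare the cycle structures of the two fundamental factorizations using Proposition~\ref{pro_red_blu_2factor}: in $X(4n,1,2n-1)$ the red factor has $t=1$ cycle of length $4n$ and the blue factor has $\gcd(4n,2n-1)=1$ cycle of length $4n$, whereas in $X(2n,2,2)$ the red factor has $t=2$ cycles of length $2n$ and the blue factor has $\gcd(2n,2)=2$ cycles of length $2n$. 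Since a factorization-preserving isomorphism would have to match these multisets of cycle lengths, no isomorphism can send one fundamental factorization onto the other.

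The main obstacle I anticipate is the bookkeeping in the second paragraph: carefully verifying that the two branches of Proposition~\ref{pro_adam_iso2} genuinely collapse to the single $(4n,1)$ family and that the modular inverse computation $(2n-1)^{-1}\equiv 2n-1\pmod{4n}$ (so that $-(2n-1)^{-1}\equiv 2n+1$) yields exactly $r'=2n+1$, together with correctly pinning down the parameters of the exceptional graph $Cay(\mathbb Z_{2n}\times\mathbb Z_2,\{\pm(1,0),\pm(1,1)\})$ as $X(2n,2,2)$ and its $X(s,t,s-r)$ partner. The cycle-counting argument for the last sentence is then comparatively clean, being a direct numerical application of Proposition~\ref{pro_red_blu_2factor}.
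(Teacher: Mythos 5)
Your proposal follows essentially the same route as the paper's proof: identify $X(4n,1,2n-1)=Cir(4n;\pm 1,\pm(2n-1))$ as the exceptional graph in Theorem~\ref{th_delrome1}, use Proposition~\ref{pro_adam_iso2} to enumerate the Ad\'am-isomorphic parameter sets, identify the exceptional partner $Cay(\mathbb{Z}_{2n}\times\mathbb{Z}_2,\{\pm(1,0),\pm(1,1)\})$ as $X(2n,2,2)$, and rule out a factorization-preserving isomorphism by comparing cycle structures via Proposition~\ref{pro_red_blu_2factor} (two Hamiltonian cycles versus two $2$-factors each with two $2n$-cycles). Your explicit computations --- e.g.\ $(2n-1)^{-1}\equiv 2n-1\pmod{4n}$ and the order count pinning down $r=2$ --- merely fill in details the paper delegates to its cited propositions, and you correctly discard your own ``merely suggestive'' non-Ad\'am remark in favor of the rigorous cycle-count argument, which is exactly the paper's closing step.
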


\begin{proof} The graph $X(4n, 1, 2n-1)$ is the graph $Cir(4n; \pm
1,\pm (2n-1))$ (see Proposition \ref{pro_r0}). By Theorem
\ref{th_delrome1}, the graphs $X(4n, 1, 2n-1)$ and $X(s', t', r')$
could be Ad\'am isomorphic or not. If they are Ad\'am isomorphic,
then $s'=4n$, $t'=1$, $r'=2n+1$, since Proposition
\ref{pro_adam_iso2} holds. If they are not Ad\'am isomorphic, then
$X(s', t', r')$ is the graph $Cay(\mathbb Z_{2n}\times\mathbb Z_2$,
$\{\pm (1,0)$, $\pm (1,1)\})$ (see Theorem \ref{th_delrome1}). Hence
$s'=2n$, $t'=2$, $r'\in\{2, 2n-2\}$ (see Proposition
\ref{pro_circulant} and \ref{pro_adam_iso2}). The fundamental
$2$-factorization of $X(4n, 1, 2n-1)$ consists of two Hamiltonian
cycles, whereas the fundamental $2$-factorization of $X(2n, 2, 2)$
consists of two $2$-factors whose connected components are two
$2n$-cycles (see Proposition \ref{pro_red_blu_2factor}). Therefore
no isomorphism between $X(4n, 1, 2n-1)$ and $X(2n, 2, 2)$ can send
the fundamental $2$-factorization of $X(4n, 1, 2n-1)$ onto the
fundamental $2$-factorization of $X(2n, 2, 2)$.\end{proof}

\begin{proposition}\label{pro_iso_X}
Let $X(s, t, r)$, $X(s', t', r')$ be non-isomorphic to $X(4n, 1,
2n-1)$, $X(2n, 2, 2)$. Then $X(s, t, r)$ and $X(s', t', r')$ are
isomorphic if and only if they are Ad\'am isomorphic, i.e., if and
only if  the parameters $s'$, $t'$, $r'$ satisfy Proposition
\ref{pro_adam_iso2}.
\end{proposition}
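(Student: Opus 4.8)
The plan is to derive the statement almost immediately from Theorem \ref{th_delrome1}, together with Corollary \ref{pro_cir_4n} and Proposition \ref{pro_adam_iso2}. One implication is trivial: an Ad\'am isomorphism is by definition a graph isomorphism, so if $X(s, t, r)$ and $X(s', t', r')$ are Ad\'am isomorphic then they are isomorphic. By Proposition \ref{pro_adam_iso2}, being Ad\'am isomorphic is exactly the parameter condition stated there, so the entire content lies in the forward implication.

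For the forward implication, I would first represent both $X(s, t, r)$ and $X(s', t', r')$ as connected Cayley multigraphs of degree $4$ on abelian groups, using Proposition \ref{pro_r0} or Proposition \ref{pro_circulant} as appropriate. Assuming $X(s, t, r) \cong X(s', t', r')$, Theorem \ref{th_delrome1} applies and guarantees that the two Cayley multigraphs are Ad\'am isomorphic, with the sole exception of the pair arising from $\mathbb Z_{4n}$, $\{\pm 1, \pm(2n-1)\}$ and $\mathbb Z_{2n}\times\mathbb Z_2$, $\{\pm(1,0), \pm(1,1)\}$.

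The crux is then to rule out this exceptional pair under the hypothesis of the proposition. Here I would invoke the identifications already at hand: by Proposition \ref{pro_r0} the graph $Cir(4n; \pm 1, \pm(2n-1))$ is $X(4n, 1, 2n-1)$, and (as used in the proof of Corollary \ref{pro_cir_4n}) the graph $Cay(\mathbb Z_{2n}\times\mathbb Z_2, \{\pm(1,0), \pm(1,1)\})$ is $X(2n, 2, 2)$. Hence the exceptional situation of Theorem \ref{th_delrome1} can occur only when one of the two graphs is isomorphic to $X(4n, 1, 2n-1)$ and the other to $X(2n, 2, 2)$. Since the hypothesis excludes both graphs from being isomorphic to either exceptional graph, the exceptional case cannot arise, so the two Cayley multigraphs---and therefore $X(s, t, r)$ and $X(s', t', r')$---are Ad\'am isomorphic. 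Translating back via Proposition \ref{pro_adam_iso2} yields the asserted parameter conditions.

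The only point requiring care---and the natural place for the argument to slip---is that a single graph $X(s, t, r)$ may admit more than one Cayley representation, so in principle the exceptional pair of Theorem \ref{th_delrome1} might be reached through representations other than the canonical ones of Propositions \ref{pro_r0} and \ref{pro_circulant}. However, the exclusion in the hypothesis is phrased at the level of graph isomorphism (non-isomorphic to $X(4n, 1, 2n-1)$, $X(2n, 2, 2)$), and Corollary \ref{pro_cir_4n} has already pinned down precisely which isomorphism classes are involved in the exception. This is exactly what is needed to close the gap without any further case analysis.
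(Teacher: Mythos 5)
Your proposal is correct and follows essentially the same route as the paper, whose entire proof is the one-line remark that the assertion follows from Theorem \ref{th_delrome1} and Proposition \ref{pro_adam_iso2}; you have simply made explicit the details the paper leaves implicit, including the key point that the hypothesis (stated at the level of isomorphism classes, as pinned down in Corollary \ref{pro_cir_4n}) rules out the exceptional pair of Theorem \ref{th_delrome1}.
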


\begin{proof} The assertion follows from  Theorem \ref{th_delrome1},
and Proposition \ref{pro_adam_iso2}.\end{proof}

\section{Special cubic graphs arising from $X(s, t, r)$ graphs.}\label{sec_generalizing_Igraphs}

When we consider graphs $X(s,t,r)$ we assume we are given a
fundamental 2-factorization. This, in turn, implies we may turn the
graph $X(s,t,r)$ into a cubic one by appropriately splitting each
vertex. We note in passing that the operation of vertex-splitting
and its converse were successfully used in a different context in
\cite{PSV13}.

There are two complementary possibilities. Either $X(s,t,r)$ arises
from an $I$-graph or not. We consider each case separately.

\subsection{$I$-graphs arising from $X(s,t,r)$.}\label{sec_i_graphs}

In Theorem \ref{th_cubic_quartic} we remarked that any special cubic
graph with a blue and red $2$-factorization gives rise to the
associated quartic graph with a blue and red $2$-factorization. In
Lemma \ref{pro_Igraph_special}, we showed that a proper $I$-graph
$I(n, p, q)$ is special and gives rise to the associated circulant
graph $Q(n, p, q)$. The following holds.

\begin{lemma}\label{pro_circulant2}
The circulant graph $Cir(n; p, q) = Q(n,p,q)$ arising from a
connected $I$-graph $I(n, p, q)$ by contracting the spokes is the
graph $X(s, t, r)$ with $t=\gcd(n, q)$, $s=n/t\geq 3$ and
$r\equiv\pm p(q/t)^{-1}$$\pmod s$.
\end{lemma}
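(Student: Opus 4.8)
The plan is to invoke Proposition \ref{pro_cayley} with the cyclic group $\Gamma=\mathbb{Z}_n$ and a judicious choice of the two generators. By Lemma \ref{pro_Igraph_special}, contracting the spokes identifies the outer vertex $v_i$ and the inner vertex $u_i$ to a single vertex $i\in\mathbb{Z}_n$; the outer edges become the $p$-edges $[i,i\pm p]$ (red) and the inner edges become the $q$-edges $[i,i\pm q]$ (blue), so that $Q(n,p,q)=Cir(n;\{\pm p,\pm q\})$. Since $I(n,p,q)$ is connected, $\gcd(n,p,q)=1$ by Proposition \ref{pro8}, hence $\{p,q\}$ generates $\mathbb{Z}_n$ and this Cayley multigraph is connected, as required to apply Proposition \ref{pro_cayley}.

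First I would set $\gamma_1=q$ and $\gamma_2=p$. The decisive reason for choosing $\gamma_1=q$ rather than $\gamma_1=p$ is the order requirement of Proposition \ref{pro_cayley}: one needs $o(\gamma_1)=s$, and indeed $o(q)=n/\gcd(n,q)=n/t=s$, whereas $p$ need not have order $s$. With $|\Gamma|=n$ one then gets $|\Gamma|/o(\gamma_1)=n/s=t$, matching the hypotheses. The proposition now yields a unique $r$ with $0\leq r\leq s-1$ and $t\gamma_2=r\gamma_1$, i.e.\ $tp\equiv rq\pmod n$, and represents $Q(n,p,q)$ as $X(s,t,r)$ (or as $X(s,t,s-r)$ when $\gamma_1$ is replaced by $-\gamma_1$, per the last line of the proof of Proposition \ref{pro_cayley}).

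It remains to solve $tp\equiv rq\pmod n$. Writing $n=st$ and $q=(q/t)\,t$, every term together with the modulus is divisible by $t$, so the congruence reduces to $p\equiv r(q/t)\pmod s$. The step that needs a short argument is that $q/t$ is invertible modulo $s$: this holds because $\gcd(q/t,s)=\gcd(q/t,n/t)=\gcd(q,n)/t=1$, using $t=\gcd(n,q)$. Hence $r\equiv p(q/t)^{-1}\pmod s$; the alternative representation $X(s,t,s-r)$ supplies the value $-p(q/t)^{-1}$, so altogether $r\equiv\pm p(q/t)^{-1}\pmod s$. Finally, $s=n/t\geq 3$ is exactly the bound recorded in Proposition \ref{pro8}, which completes the identification.

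I expect the only genuinely delicate point to be the passage from the congruence modulo $n$ to one modulo $s$ together with the invertibility of $q/t$; everything else is a direct bookkeeping application of Proposition \ref{pro_cayley}. One should also keep track of the colour swap --- the $\gamma_1$-edges (the $q$-edges, blue in $Q$) become the red (horizontal) $2$-factor of $X(s,t,r)$ --- but since the statement asserts only a graph identification, this causes no difficulty.
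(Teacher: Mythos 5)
Your proposal is correct and follows exactly the paper's route: the paper likewise applies Proposition \ref{pro_cayley} with $\Gamma=\mathbb{Z}_n$, $\gamma_1=q$, $\gamma_2=p$, obtains $tp\equiv rq\pmod n$, and solves $r(q/t)\equiv p\pmod s$ by inverting $q/t$ modulo $s$. You merely make explicit some details the paper leaves tacit (connectedness of $Cir(n;p,q)$, why $\gcd(q/t,s)=1$, and the origin of the $\pm$ sign via $X(s,t,s-r)$), all of which are accurate.
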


\begin{proof} The result follows from Proposition \ref{pro_cayley}
by setting $\Gamma=\mathbb Z_{n}$, $\gamma_1=q$, $\gamma_2=p$.
Whence $tp=rq$ for some integer $r$,  $0\leq r\leq s-1$, i.e., $r$
is a solution to the equation $r(q/t)\equiv p\pmod s$. By the
Chinese Remainder Theorem, $r\equiv p(q/t)^{-1}$$\pmod
s$.\end{proof}

\begin{theorem}\label{pro_gcd_1}
The graph $X(s, t, r)$ arises from a connected $I$-graph by
contracting the spokes if and only if $\gcd(s, t, r)=1$ and $(t,
r)$$\neq (2, 0)$ for odd values of $s$. In this case, the graph
$X(s, t, r)$ together with its fundamental $2$-factorization, is in
one-to-one correspondence with the $I$-graph $I(st, k, t)$, where
$0<k<st$, $\gcd(k, t)=1$ and $k\equiv r\pmod{s}$ (in particular,
$k=s$ if $r=0$). If at least one of the integers $k$, $t$, $\gcd(s,
r)$ is $1$, then $X(s, t, r)$ corresponds to a generalized Petersen
graph.
\end{theorem}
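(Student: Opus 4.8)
The plan is to prove this in two directions, using the machinery of Cayley-multigraph representations developed in the preceding propositions. The key translation device is Lemma \ref{pro_circulant2}, which tells me exactly how the parameters $(n,p,q)$ of a connected $I$-graph map to the parameters $(s,t,r)$ of the associated $X(s,t,r)$, together with Theorem \ref{th_cubic_quartic}, which guarantees that a quartic graph with a prescribed $2$-factorization determines a unique special cubic graph by color-preserving vertex-splitting. So I expect the $I$-graph to be recovered from $X(s,t,r)$ by splitting each vertex so that the red (horizontal) edges go to the outer rim and the blue (vertical and diagonal) edges go to the inner rim, with the inserted $1$-factor playing the role of the spokes.

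First I would establish necessity. Suppose $X(s,t,r)$ arises from a connected $I(n,p,q)$ by contracting the spokes. By Lemma \ref{pro_circulant2} we have $t=\gcd(n,q)$, $s=n/t$, and $r\equiv \pm p(q/t)^{-1}\pmod s$. Connectivity of the $I$-graph, via Proposition \ref{pro8}, forces $\gcd(t,p)=1$ and $\gcd(s,p)$ coprime with $q$; from these I would deduce $\gcd(s,t,r)=1$ by chasing the congruence $r(q/t)\equiv p\pmod s$ and using that $q/t$ is a unit modulo $s$. The excluded case $(t,r)=(2,0)$ for odd $s$ should correspond to a degenerate configuration: when $t=2$ and $r=0$ the blue $2$-factor splits into $\gcd(s,r)=s$ parallel pairs (by Proposition \ref{pro_red_blu_2factor}), which cannot come from the genuine inner/outer rim structure of an $I$-graph with odd $s$, so I would check that this parameter choice has no valid $I$-graph preimage.

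For sufficiency, assume $\gcd(s,t,r)=1$ and $(t,r)\neq(2,0)$ when $s$ is odd. I would exhibit the $I$-graph explicitly as $I(st,k,t)$, choosing $k$ with $0<k<st$, $\gcd(k,t)=1$, $k\equiv r\pmod s$ (and $k=s$ if $r=0$); the existence of such $k$ comes from Proposition \ref{pro_from_chinese_remainder} (or Proposition \ref{pro_circulant}) when $d_1=\gcd(s,t,r)=1$, which here forces the relevant group to be cyclic $\mathbb{Z}_{st}$. Setting $n=st$, $q=t$, $p=k$, I would verify via Lemma \ref{pro_circulant2} that contracting the spokes of $I(st,k,t)$ yields precisely $X(s,t,r)$: indeed $\gcd(n,q)=\gcd(st,t)=t$, $s=n/t$, and $r\equiv p(q/t)^{-1}\equiv k\cdot 1^{-1}\equiv k\equiv r\pmod s$. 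Connectivity of $I(st,k,t)$ follows from Proposition \ref{pro8} once I confirm $\gcd(t,k)=1$ and $\gcd(s,k)$ coprime with $t$. The one-to-one correspondence with the fundamental $2$-factorization is then immediate from Theorem \ref{th_cubic_quartic} and Lemma \ref{pro_Igraph_special}, since the blue edges match the inner rim and the red edges the outer rim.

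The main obstacle, I expect, is handling the degenerate and small-parameter cases cleanly rather than the generic argument: in particular pinning down why $(t,r)=(2,0)$ with $s$ odd must be excluded, and making sure the chosen $k$ is well-defined and yields a \emph{connected} $I$-graph in every admissible case (including when $s$ or $t$ is small). The final clause about generalized Petersen graphs should then drop out by applying Proposition \ref{pro8}: $I(st,k,t)$ is proper if and only if both $t$ and $\gcd(s,k)=\gcd(s,r)$ differ from $1$, so if any one of $k$, $t$, $\gcd(s,r)$ equals $1$ then $\gcd(n,q)=1$ or $\gcd(n,p)=1$ and the graph is a generalized Petersen graph $G(n,q)$.
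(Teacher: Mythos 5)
Your proposal follows essentially the same route as the paper's proof: necessity by chasing the congruence $r(q/t)\equiv p\pmod{s}$ from Lemma~\ref{pro_circulant2} together with $\gcd(t,p)=1$ from Proposition~\ref{pro8}, and sufficiency by exhibiting $I(st,k,t)$ with $k$ supplied by Proposition~\ref{pro_circulant} (with $k=s$ when $r=0$) and invoking Theorem~\ref{th_cubic_quartic} for the one-to-one correspondence. The only deviations are minor: the paper disposes of the excluded case $(t,r)=(2,0)$ for odd $s$ directly from that same congruence ($r=0$ forces $p\equiv 0\pmod{s}$, hence $p=n/2$, forbidden by the definition of an $I$-graph), which is sharper than your blue-$2$-factor degeneracy heuristic (whose ``with odd $s$'' caveat is also unnecessary, since for even $s$ that case is already killed by $\gcd(s,t,r)=2$), and for the Petersen clause the paper routes the case $\gcd(s,r)=1$ through the isomorphism $X(s,t,r)\cong X(st,1,r')$ of Proposition~\ref{pro_adam_iso2}, whereas your direct computation $\gcd(n,p)=\gcd(st,k)=\gcd(s,k)=\gcd(s,r)$ (valid since $\gcd(k,t)=1$) is equally correct and arguably cleaner.
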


\begin{proof} Assume that $X(s, t, r)$ arises from the connected
$I$-graph $I(n, p, q)$ by contracting the spokes. By Lemma
\ref{pro_circulant2}, $t=\gcd(n, q)$, $s=n/t\geq 3$ and
$r(q/t)\equiv p\pmod{s}$. Whence $(t, r)\neq (2, 0)$ if $s$ is odd,
otherwise $p=0$ (which is not possible). We show that $\gcd(s, t,
r)=1$. Suppose, on the contrary, that $\gcd(s, t, r)=d_1\neq 1$,
then $d_1$ is a divisor of $\gcd(t, p)$ since $r(q/t)\equiv p\pmod
s$. That yields a contradiction, since $\gcd(t, p)=1$ (see
Proposition \ref{pro8}). Hence $\gcd(s, t, r)=1$.

Assume that $\gcd(s, t, r)=1$. We show that $X(s, t, r)$ arises from
a connected $I$-graph by contracting the spokes. Since $\gcd(s, t,
r)=1$, the graph $X(s, t, r)$ can be represented as the circulant
graph $Cir(st; \pm t, \pm k)$, where $0<k<st$, $\gcd(t,k)=1$ and
$k\equiv r\pmod{s}$ (see Proposition \ref{pro_circulant}). If $r=0$,
then we can set $k=s$, since Proposition \ref{pro_r0} holds. The
graph $I(st, k, t)$ is connected and it gives rise to the graph
$X(s, t, r)$, since Lemma \ref{pro_circulant2} holds. By Theorem
\ref{th_cubic_quartic}, the graph $X(s, t, r)$, together with its
fundamental $2$-factorization, is in one-to-one correspondence with
the $I$-graph $I(st, k, t)$. If $k=1$ or $t=1$, then $X(s, t, r)$
corresponds to a generalized Petersen graph (see \cite{BPZ}). If
$\gcd(s, r)=1$ then $X(s, t, r)$ is isomorphic to $X(st, 1, r')$
(see Proposition \ref{pro_adam_iso2}. By the previous remarks, the
graph $X(st, 1, r')$ corresponds to a generalized Petersen graph.
The assertion follows.\end{proof}


It is straightforward to see that isomorphic $X(s, t, r)$ graphs
give rise to isomorphic $I$-graphs and also the converse is true. By
Corollary \ref{pro_cir_4n} and Proposition \ref{pro_iso_X}, the
circulant graphs $X(s, t, r)$, $X(s', t', r')$ are isomorphic if and
only if they are Ad\'am-isomorphic, i.e., there exists an
automorphism of the cyclic group of order $st=s't'$ that sends the
defining set of the circulant graph $X(s, t, r)$ onto the defining
set of the circulant graph $X(s', t', r')$. This fact is equivalent
to the results proved in \cite{HPZ} about the isomorphism between
$I$-graphs.

\subsection{Special Generalized $I$-graphs.}\label{sec_sgi_graphs}

In this section we consider the special cubic graphs that correspond
to the graphs $X(s, t, r)$ with $\gcd(s, t, r)\neq 1$, according to
the correspondence described in Theorem \ref{th_cubic_quartic}. By
Proposition \ref{pro_gcd_1}, these special cubic graphs do not
belong to the family of connected $I$-graphs. By Theorem
\ref{th_cubic_quartic} and Definition \ref{defX}, we can define a
family of special cubic graphs containing the family of connected
$I$-graphs as a subfamily. We call this family \emph{Special
Generalized $I$-graphs}. This family is not contained in the family
of $GI$-graphs \cite{MPZ}.

\smallskip

Let $s\geq 1$, $t\geq 1$ and $0\leq r\leq s-1$. We define a
\emph{Special Generalized $I$-graph} $SGI(st, s, t, r)$ as a cubic
graph of order $st$ with vertex-set $V=\{u_{i,j}, u'_{i,j}: 0\leq
i\leq t-1, 0\leq j\leq s-1\}$ and edge-set $E=\{[u_{i,j}, u_{i,
j+1}]$, $[u_{i,j}, u'_{i,j}]:$ $0\leq i\leq t-1, 0\leq j\leq
s-1\}$$\cup\{[u'_{i,j}, u'_{i+1,j}]: 0\leq i\leq t-2, 0\leq j\leq
s-1\}$$\cup\{[u'_{t-1,j}, u'_{0,j+r}]: 0\leq j\leq s-1\}$ (the
addition $j+1$ and $j+r$ are considered modulo $s$). For $s=1$ or
$(t, r)=(1, 0)$, a special generalized $I$-graph has loops. For
$s=2$ or $(t, r)=(2, 0)$, it has multiple edges. For the other
values of $s$, $t$, $r$, it is a simple cubic graph. We say that a
vertex $u_{i,j}$ (respectively, $u'_{ij}$) is an \emph{outer vertex}
(respectively, an \emph{inner vertex}). We say that an edge
$[u_{i,j}, u_{i, j+1}]$ (respectively, $[u'_{i,j}, u'_{i+1,j}]$) is
an \emph{outer edge} (respectively, an \emph{inner edge}). We say
that an edge $[u_{i,j}, u'_{i,j}]$ is a \emph{spoke}. The spokes
constitute the special $1$-factor. The graph arising from $SGI(st,
s, t, r)$ by contracting the spokes is the graph $X(s, t, r)$. The
horizontal edges of $X(s, t, r)$ correspond to the outer edges of
$SGI(st, s, t, r)$, vertical and diagonal edges of $X(s, t, r)$
correspond to the inner edges of $SGI(st, s, t, r)$. A
generalization of the proof of Proposition \ref{pro_gcd_1} gives the
following statement.

\begin{proposition}\label{pro_gcd_1_gen}
Let $s\geq 1$, $t\geq 1$, $0\leq r\leq s-1$ and $d_1=\gcd(s, t, r)$.
The graph $X(s, t, r)$, together with its fundamental
$2$-factorization, is in one-to-one correspondence with the graph
$SGI(st, s, t, k)$ where $k=s$ if $r=0$, otherwise $0< k<st/d_1$,
$\gcd(k, t)=1$ and $k\equiv r/d_1\pmod{s/d_1}$.
\end{proposition}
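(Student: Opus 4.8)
The plan is to prove Proposition \ref{pro_gcd_1_gen} by directly combining the one-to-one correspondence furnished by Theorem \ref{th_cubic_quartic} with the explicit description of the graph $SGI(st, s, t, k)$, and then verifying that contracting the spokes of $SGI(st, s, t, k)$ recovers exactly the graph $X(s, t, r)$ with its fundamental $2$-factorization. First I would recall from Theorem \ref{th_cubic_quartic} that any quartic graph equipped with a blue and red $2$-factorization is in one-to-one correspondence with a unique special cubic graph, obtained by taking one copy of the blue $2$-factor, one copy of the red $2$-factor, and joining corresponding vertices by a special $1$-factor. Applied to $X(s, t, r)$ with its fundamental $2$-factorization (the horizontal edges forming the red $2$-factor and the vertical and diagonal edges forming the blue $2$-factor, as in Subsection \ref{sec_fundamental}), this construction produces precisely a graph on vertex-set $\{u_{i,j}, u'_{i,j}\}$ whose outer edges come from the horizontal edges, whose inner edges come from the vertical and diagonal edges, and whose spokes realize the $1$-factor; that is, it produces $SGI(st, s, t, r)$ by its very definition. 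This already establishes the correspondence at the level of parameter $r$.

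Next I would address the reparametrization from $r$ to $k$. The point is that $SGI(st, s, t, r)$ and $SGI(st, s, t, k)$ denote the same cubic graph whenever the associated quartic graphs $X(s, t, r)$ and the Cayley-graph representative agree; so I must show that the integer $k$ specified in the statement yields the same graph $X(s, t, r)$ under the identification of Proposition \ref{pro_circulant}. For $r \neq 0$, Proposition \ref{pro_circulant} (together with Lemma \ref{lemma_coprime} and Lemma \ref{lemma_coprime2}) guarantees the existence of exactly such an integer $k$ with $0 < k < st/d_1$, $\gcd(k, t) = 1$ and $k \equiv r/d_1 \pmod{s/d_1}$, and it shows that $X(s, t, r)$ is then represented as the Cayley graph $Cay(\mathbb Z_{st/d_1} \times \mathbb Z_{d_1}, \{\pm(t/d_1, 1), \pm(k, 0)\})$. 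The condition $\gcd(k, t) = 1$ is precisely what is needed for $I(st, k, t)$, equivalently $SGI(st, s, t, k)$, to be connected and well-defined as a special generalized $I$-graph, mirroring the role of $\gcd(t, p) = 1$ in Proposition \ref{pro8}. For the degenerate case $r = 0$, I would invoke Proposition \ref{pro_r0}, which represents $X(s, t, 0)$ as $Cay(\mathbb Z_s \times \mathbb Z_t, \{\pm(1,0), \pm(0,1)\})$, and here the natural choice $k = s$ plays the role of the $\gamma_2$-direction; this is the reason for the explicit exception ``$k = s$ if $r = 0$'' in the statement.

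The main step is therefore to verify that the spoke-contraction of $SGI(st, s, t, k)$ returns $X(s, t, r)$ together with its fundamental $2$-factorization, so that the correspondence of Theorem \ref{th_cubic_quartic} is being applied to the correct quartic graph. This amounts to checking that identifying each outer vertex $u_{i,j}$ with its inner neighbour $u'_{i,j}$ along a spoke collapses the outer edges to horizontal edges and the inner edges to vertical and diagonal edges, reproducing Definition \ref{defX} exactly; and that the colouring induced (red on outer/horizontal, blue on inner/vertical and diagonal) is the fundamental $2$-factorization of Subsection \ref{sec_fundamental}. This verification is essentially the content of the paragraph preceding the proposition, where the edge-sets of $SGI(st, s, t, r)$ and $X(s, t, r)$ are written down in parallel, so it is routine once the bookkeeping is set up. The hard part is not any single calculation but ensuring the consistency of the two parametrizations: one must confirm that the $k$ produced by Proposition \ref{pro_circulant} is the same as the shift parameter realizing the diagonal edges after contraction, which follows because $k \equiv r/d_1 \pmod{s/d_1}$ forces the $\gamma_2$-action on the $\langle \gamma_1 \rangle$-cosets to coincide with the $r$-shift defining the diagonal edges of $X(s, t, r)$. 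With this alignment in place, the one-to-one correspondence of Theorem \ref{th_cubic_quartic} transfers verbatim, and the assertion follows; the proof is, as the statement anticipates, a generalization of the argument of Proposition \ref{pro_gcd_1} with the divisor $d_1$ no longer required to equal $1$.
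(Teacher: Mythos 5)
Your overall strategy coincides with the paper's: the paper offers no separate argument for this proposition, saying only that it follows by generalizing the proof of Theorem~\ref{pro_gcd_1}, and that proof is exactly the combination you propose, namely the Cayley representation of Proposition~\ref{pro_circulant} (Proposition~\ref{pro_r0} when $r=0$) together with the splitting/contraction correspondence of Theorem~\ref{th_cubic_quartic}. Your first paragraph is also correct as far as it goes: splitting $X(s,t,r)$ along its fundamental $2$-factorization produces, by the very definition of the special generalized $I$-graphs, the graph $SGI(st,s,t,r)$.

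The gap is in your reparametrization from $r$ to $k$, and it is genuine. In the definition of $SGI(st,s,t,\cdot)$ the fourth parameter enters only through the diagonal edges $[u'_{t-1,j},u'_{0,j+k}]$ with subscripts read modulo $s$, so contracting the spokes of $SGI(st,s,t,k)$ yields $X(s,t,k\bmod s)$; your claim that $k\equiv r/d_1\pmod{s/d_1}$ ``forces the $\gamma_2$-action on the $\langle\gamma_1\rangle$-cosets to coincide with the $r$-shift'' is precisely what would need proof, and it fails for $d_1>1$ because $k\equiv r/d_1\pmod{s/d_1}$ does not imply $k\equiv\pm r\pmod{s}$. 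Concretely, take $(s,t,r)=(4,2,2)$, so $d_1=2$ and the admissible values are $k\in\{1,3\}$. By Proposition~\ref{pro_red_blu_2factor} the blue $2$-factor of $X(4,2,2)$ consists of two $4$-cycles, whereas contracting $SGI(8,4,2,1)$ or $SGI(8,4,2,3)$ gives $X(4,2,1)$ or $X(4,2,3)$, whose blue $2$-factor is a single $8$-cycle; in fact $X(4,2,1)\cong Cir(8;\pm 2,\pm 1)$ contains triangles while $X(4,2,2)\cong Cir(8;\pm 1,\pm 3)$ is triangle-free (Corollary~\ref{pro_cir_4n} with $n=2$), so these graphs are not even abstractly isomorphic. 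Moreover the constraint $\gcd(k,t)=1$ forces $k$ odd here, so no admissible $k$ can satisfy $k\equiv\pm 2\pmod 4$: the identification you assert cannot be repaired by a better choice of $k$. Under the paper's literal definition of $SGI$, the partner of $X(s,t,r)$ is $SGI(st,s,t,r)$ itself --- which your first paragraph already establishes --- while $k$ only names the second generator in the representation $Cay(\mathbb Z_{st/d_1}\times\mathbb Z_{d_1},\{\pm(t/d_1,1),\pm(k,0)\})$; the two parameters agree modulo $s$ exactly in the case $d_1=1$ of Theorem~\ref{pro_gcd_1} (where $k\equiv r\pmod s$), which is the only case your argument actually covers. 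The defect is arguably as much in the statement's bookkeeping as in your attempt, but your proof asserts the $k$-parametrized identification rather than proving it, and as written that step is false for $d_1>1$.
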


By Corollary \ref{pro_cir_4n}, the graphs $X(4n, 1, 2n-1)$ and
$X(2n, 2, 2)$ are isomorphic, but no isomorphism between them sends
the fundamental $2$-factorization of $X(4n, 1, 2n-1)$ onto the
fundamental $2$-factorization of $X(2n, 2, 2)$. This fact means that
the application of Theorem \ref{th_cubic_quartic} to the graphs
$X(4n, 1, 2n-1)$ and $X(2n, 2, 2)$ yields non-isomorphic special
cubic graphs. As a matter of fact, Proposition \ref{pro_gcd_1} says
that the graph $X(4n, 1, 2n-1)$ is in one-to-one correspondence with
a connected $I$-graph, whereas $X(2n, 2, 2)$ does not correspond to
any $I$-graph. For instance, for $n = 2$ the graph $X(8,1,3)$ is
associated with the M\"{o}bius-Kantor graph of girth 6,
\cite{MP00,PS13}, while $X(4,2,2)$ arises from a graph of girth 4,
see Figure \ref{fig_nonMoebiusKantor_graphr}.

\begin{figure}[htbp]
\centering
\begin{minipage}{4cm}
\includegraphics[width=4cm]{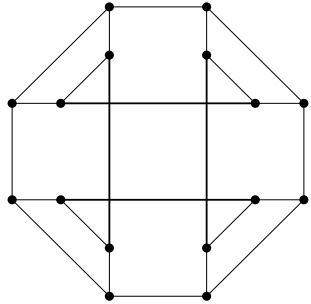}
\end{minipage}
\caption{The cubic split $X(4, 2, 2)$ graph is $SGI(8, 4, 2, 2)$.
The thick edges represent the special
1-factor.}\label{fig_nonMoebiusKantor_graphr}
\end{figure}

\section{Good Eulerian tours in $X(s, t, r)$ graphs.}\label{sec_good_tour_graph_Xstr}

In this section we construct good Eulerian subgraphs of $X(s, t,
r)$. For each $X(s, t, r)$ we denote by $W(s, t, r)$ the constructed
good Eulerian subgraph. By Proposition \ref{pro_admissible}, a
spanning Eulerian sub-quartic subgraph $W$ of $X(s, t, r)$ is
admissible if and only if at each $2$-valent vertex exactly one edge
is horizontal. We consider $X(s, t, r)$ being embedded into torus
with quadrilateral faces. Hence any of its subgraphs may be viewed
embedded in the same surface. A tour in $W$ may be regarded as as a
\emph{straight-ahead walk} (or SAW) on the surface \cite{PTZ}. A
good Eulerian tour of $W$ is an Eulerian SAW that uses only allowed
transitions, i.e. the tour cannot switch from an horizontal to a
vertical (or diagonal) edge when it visits a $4$-valent vertex of
$W$. For instance, the graph $W$ in Figure
\ref{fig_horizontal_expansion}(a) is an admissible subgraph of $X(5,
4, 3)$; the tour $\mathcal E=(x^0_0, x^1_0, x^1_1$, $x^1_2, x^1_3,
x^1_4$, $x^0_4, x^3_1, x^2_1$, $x^1_1, x^0_1, x^0_2$,$ x^1_2, x^2_2,
x^3_2$, $x^3_1, x^3_0, x^2_0$,$ x^2_1, x^2_2, x^2_3$, $x^2_4, x^3_4,
x^3_3$, $x^2_3, x^1_3, x^0_3$, $x^0_4, x^0_1)$ is a good Eulerian
SAW of $W$; hence $W$ is a good Eulerian subgraph of $X(5, 4, 3)$.

If we delete the diagonal edges in $X(s, t, r)$, we obtain a
spanning subgraph that we denote by $X'(s, t, r)$. Clearly $X'(s, t,
r)$ is the cartesian product of a cycle $C_s$ with a path $P_t$
embedded in torus or cylinder. If we further delete an edge in $C_s$
we obtain a path $P_s$. We denote the cartesian product of $P_s$ and
$P_t$ by $X''(s, t, r)$ and obtain a spanning subgraph of $X'(s, t,
r)$ and $X(s, t, r)$. In order to simplify the constructions, we
will seek to find good Eulerian subgraphs in $X'(s, t, r)$ or in
$X''(s, t, r)$. In this case the resulting good Eulerian subgraph
will be denoted by $W'(s, t, r)$ and $W''(s, t, r)$, respectively.
This simplification makes sense, since neither $X'(s, t, r)$ nor
$X''(s, t, r)$ depend on the parameter $r$. Hence any Eulerian
subgraph $W'(s, t, r)$ or $W''(s, t, r)$ is good for any $r$.

\subsection{Method of construction.}\label{subsec_technical}

We give some lemmas that will be used in the construction of a good
Eulerian subgraph $W(s, t, r)$. Given a graph $X(s, t, r)$, for
every row index $i$, $0\leq i\leq t-1$, we denote by $V_i$ the set
of vertical edges $V_i=\{[x^i_j, x^{i+1}_j] : 0\leq j\leq s-1\}$.
For every column index $j$, $0\leq j\leq s-1$, we denote by $H_j$
the set of horizontal edges $H_j=\{[x^i_j, x^i_{j+1}] : 0\leq i\leq
t-1\}$.

Let $H$ be a subgraph of $X(s, t, r)$. We say that $H$ can be
\emph{expanded vertically (from row $i$)} if $|E(H)\cap V_i|=s-1$ or
$s-2>0$ (for $s=3$ we require $|E(H)\cap V_i|=2$). We say that $H$
can be \emph{expanded horizontally (from column $j$)} if $|E(H)\cap
H_j|=t-1$ or $t-2>0$ (for $t=3$ we require $|E(H)\cap H_j|=2$). The
following statements hold.

\begin{lemma}\label{lemma_vertical}
Let $W(s, t_1, r)$ be a good Eulerian subgraph that can be expanded
vertically. Then there exists a good Eulerian subgraph $W(s, t, r)$
for every $t\geq t_1$, $t\equiv t_1\pmod 2$.
\end{lemma}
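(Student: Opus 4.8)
The plan is to argue by induction on $t$ in steps of $2$, the base case $t=t_1$ being the hypothesis. For the inductive step I would take a good Eulerian subgraph $W=W(s,t,r)$ that can be expanded vertically from a row $i$, which I may assume satisfies $0\le i\le t-2$ so that the diagonal edges, and with them the parameter $r$, are left untouched, and I would insert two new rows $A$ and $B$ between rows $i$ and $i+1$ to produce a subgraph of $X(s,t+2,r)$. The guiding idea is to reroute through the two-row band $\{A,B\}$ the $s-1$ or $s-2$ vertical edges of $V_i\cap E(W)$, reconnecting each top endpoint $x^i_j$ to the corresponding bottom endpoint $x^{i+1}_j$; because this reproduces exactly the pairing realized by the deleted edges, the single closed tour of $W$ is preserved as a single closed tour. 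Two rows are needed precisely because each rerouting must \emph{fold}, descending one new row and returning along the other.

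To build the band I would call a column a \emph{port} if $V_i\cap E(W)$ contains its vertical edge and a \emph{gap} otherwise; by hypothesis there are one or two gaps. At each port $j$ I keep the verticals $[x^i_j,x^A_j]$ and $[x^B_j,x^{i+1}_j]$, and at each gap $c$ I add the vertical $[x^A_c,x^B_c]$ to act as a turnaround. The horizontal edges of rows $A$ and $B$ are then selected so that the straight-ahead walk decomposes the band into paths of two kinds: for each gap a \emph{serpentine} that enters at a chosen port, runs horizontally along row $A$ to that gap, folds down the turnaround edge, runs back along row $B$, and leaves at the same port; and, at every remaining port, a vertical straight-through. A port that a serpentine crosses horizontally becomes $4$-valent, carrying two horizontal edges and the vertical straight-through, whereas the entry ports of the serpentines and the gap vertices are $2$-valent and incident with exactly one horizontal edge.

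I would then check the three defining properties. Admissibility is immediate from Proposition \ref{pro_admissible}: every new $2$-valent vertex has exactly one horizontal edge, hence one red and one blue edge. That the result is Eulerian and has a single good tour follows from the fact that the serpentines and the vertical straight-throughs together partition all edges of the band and realize precisely the identity pairing of top-port $j$ with bottom-port $j$; substituting these paths for the deleted verticals in the tour of $W$ thus yields one closed straight-ahead walk using only allowed (color-preserving) transitions at its $4$-valent vertices. Finally, the verticals between row $i$ and row $A$ occupy exactly the ports of $V_i$, so $W(s,t+2,r)$ is again vertically expandable from that row and the induction continues.

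The delicate point is the routing of the serpentines in the two-gap case, where the gap positions are forced by $W$ and cannot be normalized. I would separate non-adjacent gaps, for which the ports split into two arcs and each arc is swept by a single serpentine folding at one of its flanking gaps (the two serpentines using distinct gaps), from adjacent gaps, for which the ports form a single arc that must be cut into two runs, each swept by a serpentine folding at one flanking gap. In either situation one has to verify that at each gap exactly one of the two incident horizontal edges is retained in each of rows $A$ and $B$, and that the resulting pairing stays the identity; by contrast the one-gap case is the cleanest, a single serpentine sweeping the whole band while all interior ports run straight.
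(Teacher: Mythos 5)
Your construction is essentially the paper's own proof: the paper likewise inserts two new rows between rows $i$ and $i+1$ by subdividing the vertical edges of $V_i\cap E(W)$, deleting one middle edge to create the entry port, and adding the horizontal runs plus a turnaround at the gap — exactly your serpentine with straight-throughs, iterated in steps of two. Your more detailed routing analysis for the two-gap case (which the paper dismisses as "analogous") is sound, and the one-port worry for $s=3$ is precluded by the paper's definition of vertical expandability, which requires $|E(H)\cap V_i|=2=s-1$ when $s=3$.
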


\begin{proof} We use the graph $W_1=W(s, t_1, r)$ to construct a
good Eulerian subgraph $W(s, t, r)$. By the assumptions,
$|E(W_1)\cap V_i|=s-1$ or $s-2$ for some row index $i$, $0\leq i\leq
t-1$. By the symmetry properties of the graph $X(s, t_1, r)$, we can
cyclically permute its rows so that we can assume $0<i<t-1$. We
treat separately the cases $|E(W_1)\cap V_i|=s-1$ and $|E(W_1)\cap
V_i|=s-2$. Consider $|E(W_1)\cap V_i|=s-1$ and denote by $[x^i_a,
x^{i+1}_a]$ the vertical edge of $V_i$ which is missing in $W_1$. We
can cyclically permute the columns of $X(s, t_1, r)$ and assume
$a=0$. We subdivide every vertical edge $[x^i_j, x^{i+1}_j]$, with
$0<j\leq s-1$, by inserting two new vertices, namely, $y^i_j$ and
$y^{i+1}_j$ ($y^i_j$ is adjacent to $x^i_j$; $y^{i+1}_j$ is adjacent
to $x^{i+1}_j$); we delete the edge $[y^{i}_{s-1}, y^{i+1}_{s-1}]$
and in correspondence of the missing edge $[x^i_0, x^{i+1}_0]$ we
add two new vertices, namely, $y^i_0$ and $y^{i+1}_0$; we add the
edges $[y^i_j, y^{i}_{j+1}]$,  $[y^{i+1}_j, y^{i+1}_{j+1}]$ with
$0\leq j\leq s-2$. The resulting graph is a good Eulerian subgraph
$W(s, t_1+2, r)$. We can iterate the process and find a good
Eulerian subgraph $W(s, t, r)$ for every $t\geq t_1$, $t\equiv
t_1\pmod 2$. The case $|E(W_1)\cap V_i|=s-2$ can be treated
analogously to the case $|E(W_1)\cap V_i|=s-1$. As an example,
consider the graph $W''(6, 5, r)$ in Figure
\ref{fig_vertical_expansion}. It can be expanded vertically from row
$1$ and it yields a good Eulerian subgraph $W''(6, 7,
r)$.\end{proof}

\begin{figure}[htbp]
\centering
\begin{minipage}{9cm}
\includegraphics[width=9cm]{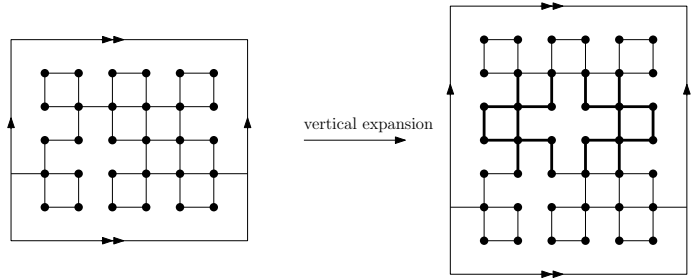}
\end{minipage}
\caption{A vertical expansion of the good Eulerian subgraph $W'(6,
5, r)$ yields a good Eulerian subgraph $W(6, 7, r)$.}
\label{fig_vertical_expansion}
\end{figure}

In the following lemma we consider horizontal expansions. In this
case we have to pay attention to the diagonal edges of $W(s, t, r)$,
if any exists. If $[x^{t-1}_j, x^0_{j+r}]$, where $j+r$ is
considered modulo $s$, is a diagonal edge of $W(s, t, r)$, then we
can assume $j<j+r$, since we can cyclically permute the columns of
$W(s, t, r)$. Therefore we can say that a diagonal edge $[x^{t-1}_j,
x^0_{j+r}]$ \emph{crosses column} $\ell$ if $j\leq\ell< j+r$.

\begin{lemma}\label{lemma_horizontal2}
Let $W(s_1, t, r_1)$ be a good Eulerian subgraph that can be
expanded horizontally from column $\ell$. If no diagonal edge of
$W(s_1, t, r_1)$ crosses column $\ell$, then there exists a good
Eulerian subgraph $W(s, t, r_1)$ for every $s\geq s_1$, $s\equiv
s_1\pmod 2$. If every diagonal edge crosses column $\ell$, then
there exists a good Eulerian subgraph $W(s_1-r+r_1, t, r)$ for every
$r_1\leq r\geq s_1-r_1$, $r\equiv r_1\pmod 2$.
\end{lemma}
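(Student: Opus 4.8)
The plan is to mirror the vertical-expansion argument of Lemma \ref{lemma_vertical}, but inserting columns rather than rows, while carefully tracking how the diagonal edges interact with the inserted columns. The essential combinatorial device is the same: given a good Eulerian subgraph that is ``slack'' along one line (here column $\ell$), we splice in extra parallel segments two at a time so that the straight-ahead-walk structure is preserved and the parity of the new parameter matches that of the old one.

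First I would handle the case in which \emph{no} diagonal edge of $W(s_1,t,r_1)$ crosses column $\ell$. By the definition of horizontal expandability, $|E(W)\cap H_\ell|$ equals $t-1$ or $t-2$ (with the special convention for $t=3$). Using the symmetry of $X(s_1,t,r_1)$ I would cyclically permute the columns so that column $\ell$ is interior, and then permute the rows so that the missing horizontal edge (in the $t-1$ case) sits at a convenient row. I would then subdivide each horizontal edge of $H_\ell$ by inserting two new vertices, delete one of the freshly created short edges, add the two new vertices corresponding to the previously missing edge, and reconnect them by vertical edges running between the two inserted layers. This is exactly the transpose of the construction in Lemma \ref{lemma_vertical}, so by the same verification the result is a good Eulerian subgraph of $X(s_1+2,t,r_1)$; since no diagonal crosses column $\ell$, the diagonal edges are untouched and the value of $r$ is unchanged. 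Iterating yields $W(s,t,r_1)$ for every $s\geq s_1$ with $s\equiv s_1\pmod 2$.

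Next I would treat the case in which \emph{every} diagonal edge crosses column $\ell$. Here the insertion of a pair of columns at position $\ell$ necessarily lengthens each diagonal by the same amount, because a diagonal edge $[x^{t-1}_j, x^0_{j+r_1}]$ that crosses $\ell$ (i.e. $j\leq\ell<j+r_1$) must, after the expansion, reconnect to the shifted copy of $x^0_{j+r_1}$. Thus each expansion increases both $s$ and the diagonal shift by the same even increment, so that the new graph is $W(s_1+2,t,r_1+2)$, and more generally, after performing the expansion the appropriate number of times, a good Eulerian subgraph $W(s_1-r_1+r,t,r)$ for every admissible $r\geq r_1$ with $r\equiv r_1\pmod 2$ (so that $s=s_1-r_1+r$ and $s-r = s_1-r_1$ is fixed). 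The bookkeeping identity to check is simply $s-r=s_1-r_1$, which records that expanding ``behind'' all the diagonals shifts $r$ and $s$ in lockstep.

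The main obstacle is the second case: one must verify that inserting the new columns does not destroy admissibility or the good (non-switching) transition structure at the diagonal wrap-around, and that the new diagonal edges $[x^{t-1}_j,x^0_{j+r}]$ are correctly indexed modulo the new value $s$. Concretely, I would check that at each newly created $2$-valent vertex exactly one incident edge is horizontal (so Proposition \ref{pro_admissible} guarantees admissibility) and that the spliced segments join the existing straight-ahead walk without introducing a forbidden horizontal-to-vertical switch at any $4$-valent vertex. Once the local picture at the inserted columns and at the diagonal boundary is confirmed to respect the fundamental $2$-factorization, the global Eulerian SAW property follows exactly as before, and a picture analogous to Figure \ref{fig_vertical_expansion} makes the construction transparent.
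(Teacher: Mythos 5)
Your proposal is correct and takes essentially the same route as the paper, which likewise just applies the subdivision method of Lemma \ref{lemma_vertical} to the edges of $H_\ell$, observing that each double-column insertion leaves the diagonal edges untouched when none crosses column $\ell$, and shifts them by $r-r_1$ steps (keeping $s-r$ invariant) when all of them cross. Your bookkeeping identity $s-r=s_1-r_1$ also correctly resolves the typo in the statement: $W(s_1-r+r_1,t,r)$ should read $W(s_1+r-r_1,t,r)$, as the paper's own example $W(5,4,3)\to W(7,4,5)$ confirms.
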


\begin{proof} We apply the method described in Lemma
\ref{lemma_vertical} to the edges in $H_{\ell}$. If every diagonal
edge of $W(s_1, t, r_1)$ crosses column $\ell$, then by subdividing
the edges of $H_{\ell}$ we can shift of $r-r_1$ steps the diagonal
edges of $W(s_1, t, r_1)$. If no diagonal edge of $W(s_1, t, r_1)$
crosses column $\ell$, then no diagonal edge is shifted. As an
example, consider the graph $W(5, 4, 3)$ in Figure
\ref{fig_horizontal_expansion}. If we expand horizontally the graph
from column $\ell=0$, then no diagonal edge crosses column $\ell$
and we obtain a good Eulerian subgraph $W(7, 4, 3)$. If we expand
horizontally the graph from column $\ell=2$, then every diagonal
edge crosses column $\ell$ and we obtain a good Eulerian subgraph
$W(7, 4, 5)$.\end{proof}

\begin{figure}[htbp]
\centering
\begin{minipage}{12cm}
\includegraphics[width=12cm]{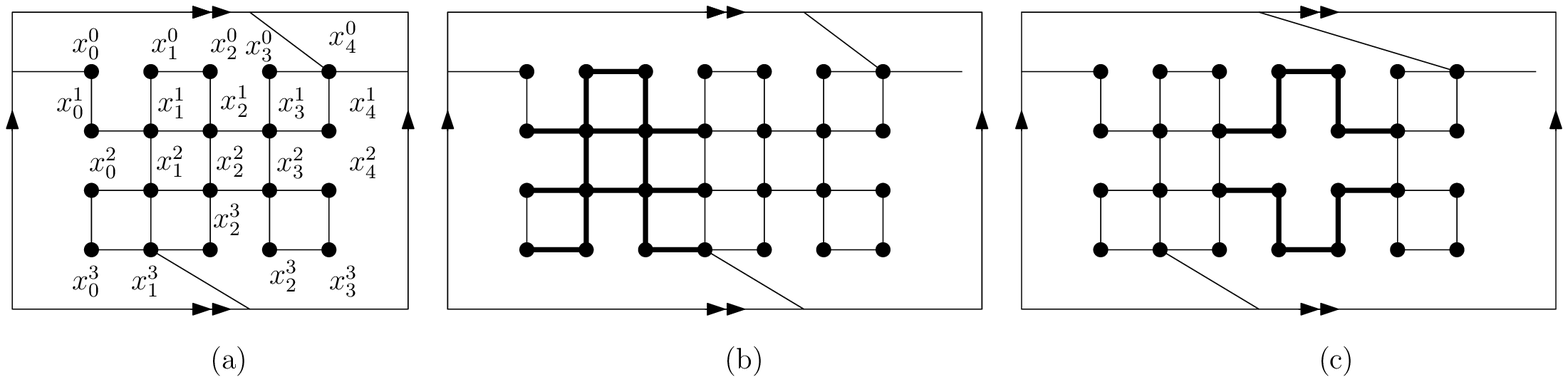}
\end{minipage}
\caption{A good Eulerian subgraph: (a) $W(5, 4, 3)$; $(b)$ $W(7, 4,
3)$; (c) $W(7, 4, 5)$. The graphs $W(7, 4, 3)$ and $W(7, 4, 5)$ are
obtained from $W(5, 4, 3)$ by an horizontal expansion from column
$0$ and column $2$, respectively.} \label{fig_horizontal_expansion}
\end{figure}

\subsection{Constructions of good Eulerian subgraphs.}\label{subsec_construction}

We apply the lemmas described in Section \ref{subsec_technical} to
construct a good Eulerian subgraph $W(s, t, r)$. It is
straightforward to see that the existence of loops in $X(s, t, r)$
excludes the existence of a good Eulerian subgraph $W(1, t, r)$ and
$W(s, 1, 0)$. Analogously, the existence of horizontal parallel
edges in $X(2, t, r)$ excludes the existence of a good Eulerian
subgraph $W(2, t, r)$ with $t$ odd and $W(2, t, 1)$ with $t$ even,
$t>2$, (see Case $2$ in the proof of Lemma
\ref{pro_construction_goodEuleriansubgraph} for a good Eulerian
subgraph $W(2, 2, 1)$ and $W(2, t, 0)$ with $t$ even). Hence we can
consider $s\geq 3$ and $(t, r)$$\neq (1, 0)$. The following hold.

\begin{proposition}\label{pro_good_t=1}
The graph  $X(s, 1, r)$, $r\neq 0$, possesses a good Eulerian
subgraph, unless $s=6m+5$, with $m\geq 0$, and $r\in\{2, s-2,
(s+1)/2, (s-1)/2\}$.
\end{proposition}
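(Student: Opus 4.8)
The plan is to translate the statement into the language of generalized Petersen graphs and then invoke Alspach's theorem. First I would observe, via Proposition \ref{pro_r0}, that $X(s,1,r)=Cir(s;\pm1,\pm r)$, and via Proposition \ref{pro_gcd_1} (here $\gcd(s,1,r)=1$ forces $k=r$) that this is precisely the quartic graph associated by spoke-contraction to the $I$-graph $I(s,r,1)\cong I(s,1,r)=G(s,r)$, a generalized Petersen graph. Corollary \ref{cor_eulerian_subgraphs} then says that $X(s,1,r)$ admits a good Eulerian subgraph if and only if $G(s,r)$ is Hamiltonian. I would also record the isomorphism $X(s,1,r)\cong X(s,1,s-r)$ noted after Definition \ref{defX} (equivalently the first case of Proposition \ref{pro_adam_iso2}), which restricts attention to $2\le r\le s/2$ and explains why the proposed exceptional set is symmetric under $r\mapsto s-r$.

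Next I would pin down exactly which $G(s,r)$ are non-Hamiltonian. By Alspach's Theorem \ref{th_Alspach}, $G(s,r)$ fails to be Hamiltonian precisely when $s\equiv5\pmod6$ (that is, $s=6m+5$) and $G(s,r)\cong G(s,2)$. Using the classification of isomorphisms between generalized Petersen graphs (equivalently, Propositions \ref{pro_adam_iso2} and \ref{pro_iso_X} applied to the circulants $X(s,1,\cdot)$), $G(s,r)\cong G(s,2)$ holds if and only if $r\equiv\pm2\pmod s$ or $2r\equiv\pm1\pmod s$. For odd $s$ the first congruence gives $r\in\{2,s-2\}$ and the second gives $r\in\{(s+1)/2,(s-1)/2\}$, since $2\cdot\frac{s+1}{2}=s+1\equiv1$ and $2\cdot\frac{s-1}{2}=s-1\equiv-1\pmod s$. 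As $s=6m+5$ is always odd, these four residues are exactly the set named in the statement.

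Combining the two steps settles both directions simultaneously. For $s=6m+5$ and $r\in\{2,s-2,(s+1)/2,(s-1)/2\}$ the graph $G(s,r)\cong G(6m+5,2)$ is non-Hamiltonian, so by Corollary \ref{cor_eulerian_subgraphs} no good Eulerian subgraph of $X(s,1,r)$ exists; for every other admissible pair $(s,r)$ either $s\not\equiv5\pmod6$ or $G(s,r)\not\cong G(s,2)$, whence $G(s,r)$ is Hamiltonian and a good Eulerian subgraph exists. The only pairs falling outside this simple-graph dictionary are the multigraph cases, namely $r=1$ (equivalently $r=s-1$) and $r=s/2$ for even $s$; for these I would exhibit a good Eulerian subgraph directly (for $r=1$ the associated cubic graph is the Hamiltonian prism $G(s,1)$), and I would verify that none of them lies in the exceptional set, consistently with the claim.

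The delicate point, and where the genuine combinatorial work lies, is the existence direction when one insists on proving it constructively rather than merely citing Hamiltonicity — which the programme of this section emphasises, since these $t=1$ subgraphs are the base cases later fed into the vertical and horizontal expansions of Lemmas \ref{lemma_vertical} and \ref{lemma_horizontal2}. The main obstacle is then to produce, for each residue of $s$ modulo $6$ and each non-exceptional $r$, an \emph{explicit} good Eulerian subgraph $W(s,1,r)$ that is moreover expandable. I would handle this by fixing finitely many base diagrams and applying horizontal expansion in blocks that raise $s$ by $6$ (so that $s\bmod6$, and hence the exceptional status, is preserved), keeping careful track of which diagonal edges cross the expansion column as required by Lemma \ref{lemma_horizontal2}; this mirrors Alspach's lattice-diagram analysis and is the only genuinely lengthy ingredient.
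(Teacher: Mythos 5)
Your proposal is correct and follows essentially the same route as the paper's proof: both represent $X(s,1,r)$ as the circulant $Cir(s;\pm 1,\pm r)$ (Proposition \ref{pro_r0}), identify it via Proposition \ref{pro_gcd_1} with the generalized Petersen graph $G(s,r)$, combine Alspach's Theorem \ref{th_Alspach} with Theorem \ref{th_eulerian_subgraphs} (Corollary \ref{cor_eulerian_subgraphs}), and use the Ad\'am-isomorphism classification (Propositions \ref{pro_adam_iso2} and \ref{pro_iso_X}) to compute the exceptional residues $r\in\{2,\,s-2,\,(s+1)/2,\,(s-1)/2\}$ as exactly those with $X(s,1,r)\cong X(6m+5,1,2)$. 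Your added care with the multigraph cases $r=1$ and $r=s/2$, and the closing sketch of a constructive alternative, refine rather than change the argument --- the paper itself notes, immediately after the proposition, that such a direct construction exists but is omitted.
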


\begin{proof} By Proposition \ref{pro_r0}, the graph $X(s, 1, r)$
can be represented as the circulant multigraph $Cir(st; \pm 1, \pm
r)$. By Proposition \ref{pro_gcd_1}, the graph $X(s, 1, r)$
corresponds to the generalized Petersen graph $I(s, r, 1)$ or $G(s,
r)$. In particular, the graph $X(6m+5, 1, 2)$ corresponds to the
generalized Petersen graph $G(6m+5, 2)$. Hence  $X(s, 1, r)$ has a
good Eulerian subgraph, unless it is isomorphic to $X(6m+5, 1, 2)$,
since Theorem \ref{th_Alspach} and \ref{th_eulerian_subgraphs} hold.
By Proposition \ref{pro_iso_X}, the graphs that are isomorphic to
$X(6m+5, 1, 2)$ are $X(6m+5, 1, r')$, where $r'\in\{2, 6m+3\}$ or
$r'\equiv \pm 2^{-1}\pmod {6m+5}$, i.e., $r'\in\{3m+3, 3m+2\}$,
since $r'< 6m+5$.\end{proof}


We can construct a good Eulerian subgraph $W(s, 1, r)$, $r\neq 0$,
without using Theorem \ref{th_Alspach}. More specifically, by
Proposition \ref{pro_adam_iso2} the graph $X(s, 1, r)$, with $r\neq
0$, is isomorphic to the graph $X(s/\gcd(s, r), \gcd(s, r), r')$,
where $r'\equiv\pm r^{-1}\pmod s$. For $r\neq 0$ and $\gcd(s, r)>1$,
a construction of a good Eulerian subgraph can be found in the proof
of Lemma \ref{pro_construction_goodEuleriansubgraph}. We can also
provide an ad hoc construction for the case $\gcd(s, r)=1$, but we
prefer to omit this construction, since the existence of a good
Eulerian subgraph $W(s, 1, r)$, $r\neq 0$, is known (see Proposition
\ref{pro_good_t=1}) and the construction is based on the method of
Lemma \ref{pro_construction_goodEuleriansubgraph}. We will show that
the graph $X(6m+5, 1, 2)$, $m\geq 0$, has no good Eulerian subgraph,
i.e., the generalized Petersen graph is not Hamiltonian. The
following statement is a consequence of Proposition
\ref{pro_good_t=1} and it will be used in the proof of Lemma
\ref{pro_construction_goodEuleriansubgraph}.

\begin{proposition}\label{pro_good_gcd_sr}
The graph $X(s, t, r)$, with $s\geq 3$, $t>1$ and $\gcd(s, r)=1$ has
a good Eulerian subgraph.
\end{proposition}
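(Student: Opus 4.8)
The plan is to reduce the general case $t>1$ to the already-settled case $t=1$ by means of the isomorphisms between $X$-graphs established earlier, and then to show that the reduced graph never falls into the single exceptional family appearing in Proposition~\ref{pro_good_t=1}.

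First I would observe that $\gcd(s,r)=1$ together with $s\ge 3$ forces $r\ne 0$, since $\gcd(s,0)=s\ge 3$. Hence Proposition~\ref{pro_from_chinese_remainder} (equivalently the second alternative of Proposition~\ref{pro_adam_iso2}) applies and yields an isomorphism $X(s,t,r)\cong X(st,1,r')$ with $r'\equiv\pm t\,k^{-1}\pmod{st}$, where $k\equiv r\pmod{s}$ and $\gcd(k,t)=1$; here $d_1=\gcd(s,t,r)=1$ because $\gcd(s,r)=1$. By Proposition~\ref{pro_adam_iso1} this Ad\'am isomorphism carries the fundamental $2$-factorization of $X(s,t,r)$ onto that of $X(st,1,r')$ (up to interchanging the two colours). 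Since admissibility and the notion of an allowed transition are phrased solely in terms of whether two incident edges carry the same or different colour, the existence of a good Eulerian subgraph is invariant under such a colour-respecting isomorphism. Thus it suffices to exhibit a good Eulerian subgraph of $X(st,1,r')$.

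Next I would invoke Proposition~\ref{pro_good_t=1}: the graph $X(st,1,r')$, with $r'\ne 0$, possesses a good Eulerian subgraph unless $st=6m+5$ and $r'\in\{2,\,st-2,\,(st+1)/2,\,(st-1)/2\}$. So the whole statement comes down to ruling out that $(st,1,r')$ lies in this exceptional list. The key arithmetic observation is that $\gcd(st,r')=t$: indeed $k\equiv r\pmod s$ gives $\gcd(k,s)=\gcd(r,s)=1$, and together with $\gcd(k,t)=1$ this makes $k$ a unit modulo $st$, so $\gcd(st,\pm t\,k^{-1})=\gcd(st,t)=t$. Equivalently, by Proposition~\ref{pro_red_blu_2factor} the blue $2$-factor of $X(s,t,r)$ consists of $\gcd(s,r)=1$ cycle while its red $2$-factor has $t$ cycles; a colour-respecting isomorphism must then send the unique $t$-cycle factor of $X(st,1,r')$, namely its blue factor of $\gcd(st,r')$ cycles, back to a $t$-cycle factor, forcing $\gcd(st,r')=t$.

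The main obstacle is precisely this last exclusion, and I expect it to be resolved as follows. If $st\not\equiv 5\pmod 6$ there is nothing to check. If $st=6m+5$ then $st$ is odd, so the divisor $t>1$ of $st$ satisfies $t\ge 3$; in particular $\gcd(st,r')=t\ge 3\ne 1$. On the other hand every value in the exceptional set is coprime to $st$: both $2$ and $st-2$ are coprime to the odd number $st$, while $(st+1)/2$ and $(st-1)/2$ are the multiplicative inverses of $2$ and $-2$ modulo $st$, hence units. Therefore $\gcd(st,r')=t\ge 3$ is incompatible with $r'$ belonging to the exceptional list, so $(st,1,r')$ is not exceptional and, by Proposition~\ref{pro_good_t=1}, $X(st,1,r')$ admits a good Eulerian subgraph. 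Pulling this subgraph back along the colour-respecting isomorphism of the first step produces a good Eulerian subgraph of $X(s,t,r)$, which completes the argument.
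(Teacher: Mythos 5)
Your proof is correct and follows essentially the same route as the paper's: reduce via the Ad\'am isomorphism of Proposition~\ref{pro_adam_iso2} to $X(st,1,r')$ with $r'\equiv\pm tk^{-1}\pmod{st}$, invoke Proposition~\ref{pro_good_t=1}, and rule out the exceptional family $r'\in\{2,\,st-2,\,(st+1)/2,\,(st-1)/2\}$ by noting that $t>1$ divides $r'$ while those values are units (the paper phrases this as $t$ being coprime to each of them). Your only additions are making explicit, via Proposition~\ref{pro_adam_iso1}, that the isomorphism respects the fundamental $2$-factorization and hence transfers good Eulerian subgraphs --- a point the paper leaves implicit --- which is a welcome clarification rather than a different argument.
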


\begin{proof} By Proposition \ref{pro_circulant}, the graph $X(s, t,
r)$ can be represented as the circulant graph $Cir(st; \pm t, \pm
k)$, where $\gcd(k, t)=1$ and $k\equiv r\pmod s$. By Proposition
\ref{pro_adam_iso2}, the graph $X(s, t, r)$ is isomorphic to the
graph $X(st, 1, r')$, with $r'\neq 0$, since $\gcd(s, r)=1$. If
$st\not\equiv 5\pmod 6$, then the assertion follows from Proposition
\ref{pro_good_t=1} (see Proposition \ref{pro_adam_iso2}). Consider
$st\equiv 5\pmod 6$. We show that $X(s, t, r)$ is not isomorphic to
$X(6m+5, 1, 2)$, $m\geq 0$. Suppose, on the contrary, that $X(s, t,
r)$ is isomorphic to $X(6m+5, 1, 2)$. Then $X(st, 1, r')$$=X(6m+5,
1, r')$, where $r'\in\{2$, $st-2$, $(st+1)/2$, $(st-1)/2\}$ (see
Proposition \ref{pro_good_t=1}). By Proposition \ref{pro_adam_iso2},
the integer $r'$ satisfies the relation $r'\equiv\pm
tk^{-1}\pmod{st}$. Whence $t$ is a divisor of $r'$. That yields a
contradiction, since $r'\in\{2$, $st-2$, $(st+1)/2$, $(st-1)/2\}$
and $t$ is coprime with the integers in $\{2$, $st-2$, $(st+1)/2$,
$(st-1)/2\}$.\end{proof}

\begin{lemma}\label{pro_construction_goodEuleriansubgraph}
Let $s\geq 3$, $t\geq 2$ and $0\leq r\leq s-1$. There exists a good
Eulerian subgraph $W(s, t, r)$, unless $s$ is odd and $(t, r)=(2,
0)$.
\end{lemma}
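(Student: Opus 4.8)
The plan is to construct explicit good Eulerian subgraphs by induction on the parameters $s$ and $t$, using the vertical and horizontal expansion machinery of Lemma~\ref{lemma_vertical} and Lemma~\ref{lemma_horizontal2} to reduce to a finite collection of base cases. First I would separate off the case $\gcd(s,r)=1$, which is already settled by Proposition~\ref{pro_good_gcd_sr} for every $s\geq 3$, $t>1$: here a good Eulerian subgraph always exists. Thus the remaining work concerns $\gcd(s,r)=d>1$ (including $r=0$, where $d=s$). For these, I would exhibit good Eulerian subgraphs in $X''(s,t,r)$ or $X'(s,t,r)$ for small seed values of $s$ and $t$, chosen so that each seed graph can be expanded vertically (raising $t$ by $2$) and horizontally (raising $s$ by $2$, or shifting $r$) while preserving goodness and admissibility. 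Because the expansions step by $2$, I would need seeds covering all residues: both parities of $s$ and of $t$, within each relevant class modulo the greatest common divisors. The embedding into the torus lets me verify at each stage that the tour is a single Eulerian straight-ahead walk using only allowed (color-preserving at $4$-valent, color-switching at $2$-valent) transitions.

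The core of the argument would then be a careful bookkeeping: one shows that starting from a handful of explicitly drawn seeds $W(s_0,t_0,r_0)$ (presumably for $s_0,t_0\in\{2,3,4,5,6\}$ and a few values of $r_0$, plus the genuinely small dipole-type graphs $W(2,2,1)$ and $W(2,t,0)$ alluded to in Case~$2$), repeated application of Lemma~\ref{lemma_vertical} and Lemma~\ref{lemma_horizontal2} reaches every admissible triple $(s,t,r)$ with $s\geq 3$, $t\geq 2$ other than the excluded family. The symmetry reductions $X(s,t,r)\cong X(s,t,s-r)$ and the isomorphism results of Proposition~\ref{pro_adam_iso2} would be used freely to normalize $r$ and to swap the roles of $s$ and $t$ when an expansion in one direction is unavailable. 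The claimed single exception, $s$ odd together with $(t,r)=(2,0)$, I would handle last as a genuine nonexistence statement, arguing that for $X(s,2,0)$ with $s$ odd the red $2$-factor consists of two odd cycles of length $s$ joined only by the vertical edges, so that every admissible spanning subgraph forces a parity obstruction preventing a single-component good tour.

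The main obstacle, as I see it, is not any single computation but the completeness of the reduction: one must guarantee that the two expansion operations, each advancing a parameter by $2$ and constrained by the position of diagonal edges relative to the chosen expansion column, actually generate \emph{all} the required triples from the finite seed set without leaving gaps. In particular Lemma~\ref{lemma_horizontal2} behaves differently depending on whether diagonal edges cross the expansion column, and a diagonal expansion changes $r$; so I would have to check that for each target $r$ there is a route of expansions from some seed that lands on exactly that value of $r$ modulo the parity constraint, while keeping $\gcd(s,r)>1$ controlled throughout. Managing this interaction between the parity of the expansions and the arithmetic of $r$ and $\gcd(s,r)$ is the delicate part, and I expect the proof to be organized as an explicit case analysis—likely Case~$1$ for $\gcd(s,r)=1$ (dispatched by Proposition~\ref{pro_good_gcd_sr}) and Case~$2$ for $\gcd(s,r)>1$ with the small-parameter base constructions drawn out in figures—rather than a slick closed-form argument.
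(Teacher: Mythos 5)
Your outline tracks the paper's strategy closely---seed graphs plus the expansion machinery of Lemmas~\ref{lemma_vertical} and~\ref{lemma_horizontal2}, Proposition~\ref{pro_good_gcd_sr} to dispatch $\gcd(s,r)=1$, and Proposition~\ref{pro_adam_iso2} to normalize $r$ and swap the roles of $s$ and $t$ (the paper's Case~4 does exactly this reduction for $s$ odd, $t$ even). But what you have written is a plan, not a proof, and the part you defer (``I would exhibit good Eulerian subgraphs \dots for small seed values'') is precisely the content of the paper's argument: a five-case analysis by parity of $s$ and $t$ (with $t=3$ treated separately in the appendix, Section~\ref{sec_appendix}) resting on roughly a dozen explicitly drawn and verified seeds. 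You correctly identify the completeness of the reduction as the delicate point, but identifying an obstacle is not the same as overcoming it, and nothing in the proposal certifies that the expansion routes reach every triple $(s,t,r)$.

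Moreover, your specific guess about the seed set is wrong in a way that matters. You posit finitely many seeds with $s_0,t_0\in\{2,\dots,6\}$; this cannot work, because horizontal expansion changes $r$ only by shifting diagonal edges two columns at a time, so from bounded seeds one cannot reach, e.g., all odd $r$ with $3\leq r\leq s/2$ at $t=3$. The paper handles this by constructing an \emph{unbounded} recursive family $W(2r+1,3,r)$ (and similarly uses $W(7,5,3)$, $W(8,3,3)$, $W(9,3,4)$, $W(10,3,5)$, $W(13,3,6)$), where the recursion is not an application of Lemma~\ref{lemma_horizontal2} alone: it expands \emph{and then inserts new diagonal edges} (see Figure~\ref{fig_733}), an operation your two-lemma framework does not supply, after which goodness must be re-verified by hand. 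So a genuine idea is missing, not merely bookkeeping. On the positive side, your treatment of the exceptional case is actually more explicit than the paper's one-line remark about parallel vertical edges: your parity sketch is the right idea (any $4$-valent vertex forces a digon component from the doubled vertical edges, so $W$ would have to be a Hamiltonian cycle alternating horizontal and vertical edges, requiring an even number of vertical edges and contradicting $s$ odd), though as stated it too is an assertion rather than a worked argument.
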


\begin{proof} We treat separately the cases: $t=3$; $s$, $t$ even;
$s$ even, $t$ odd, $t\geq 5$; $s$ odd, $t$ even; $s$, $t$ odd,
$t\geq 5$. When we will speak of ``vertical'' and ``horizontal''
expansion we refer implicitly to Lemma \ref{lemma_vertical} and
\ref{lemma_horizontal2}, respectively.

\smallskip

\noindent\textbf{Case 1: $t=3$.} This case is treated in Section
\ref{sec_appendix}, since it requires a quite long description.

\noindent\textbf{Case 2: $s$ even, $t$ even.} The graph $W''(6, 8,
r)$ in Figure \ref{fig_pro1}(d) can be expanded vertically from row
$1$ and horizontally from column $2$. It yields a good Eulerian
subgraph $W''(s, t, r)$ for every $s, t$ even $s\geq 6$, $t\geq 8$.
It remains to construct a good Eulerian subgraph $W''(s, t, r)$ for
$s\geq 6$, $t=2$, $4$, $6$ and $W''(4, t, r)$ for $t\geq 2$, $t$
even. The graph $W'(2, 2, r)$ in Figure \ref{fig_pro1}(a) can be
expanded horizontally from column $0$ or $1$. It yields a good
Eulerian subgraph $W'(s, 2, r)$ for every $s$ even, $s\geq 2$. We
expand horizontally the graph $W''(4, 4, r)$ in Figure
\ref{fig_pro1}(b) and obtain $W''(s, 4, r)$ for every $s$ even,
$s\geq 4$. We rotate $W''(s, 4, r)$ by $90$ degrees clockwise
(around a vertex) and obtain a good Eulerian subgraph $W''(4, t, r)$
for every $t$ even, $t\geq 4$. We expand horizontally the graph
$W''(6, 6, r)$ in Figure \ref{fig_pro1}(c) from column $3$ and
obtain $W''(s, 6, r)$ for every $s$ even, $s\geq 6$.

\begin{figure}[htbp]
\centering
\begin{minipage}{10cm}
\includegraphics[width=10cm]{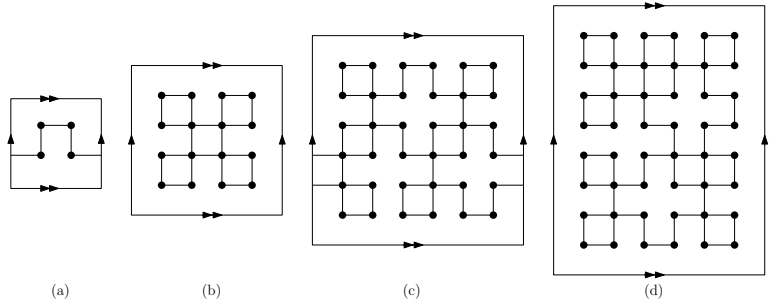}
\end{minipage}
\caption{A good Eulerian subgraph: (a) $W'(2, 2, r)$; (b) $W''(4, 4,
r)$; (c) $W'(6, 6, r)$; (d) $W''(6, 8, r)$.} \label{fig_pro1}
\end{figure}

\smallskip

\noindent\textbf{Case 3: $s$ even, $t$ odd, $t\geq 5$.}  The graph
$W'(6, 5, r)$ in Figure \ref{fig_vertical_expansion} can be expanded
vertically from row $2$ and horizontally from column $3$. It yields
a good Eulerian subgraph $W'(s, t, r)$ for every $s$ even, $s\geq
6$, $t$ odd, $t\geq 5$. It remains to construct $W(4, t, r)$ with
$t$ odd, $t\geq 5$, $0\leq r\leq 3$. Since $X(4, t, r)$ is
isomorphic to $X(4, t, 4-r)$, we can consider $0\leq r\leq 2$. A
good Eulerian subgraph for $W(4, t, 0)$, $t$ odd, $t\ge 5$, can be
obtained from $W(4, 3, 0)$ in Figure \ref{fig_430}(a) by a vertical
expansion from row $1$. The existence of a good Eulerian subgraph
$W(4, t, 1)$ follows from Proposition \ref{pro_good_gcd_sr}. By
Proposition \ref{pro_adam_iso2}, the graph $X(4, t, 2)$ is
isomorphic to the graph $X(2t, 2, r')$. By the results in Case $2$,
there exists a good Eulerian subgraph $W(2t, 2, r')$.

\smallskip

\noindent\textbf{Case 4: $s$ odd, $t$ even.} By Proposition
\ref{pro_adam_iso2}, the graph $X(s, t, r)$, with $r\neq 0$, is
isomorphic to the graph $X(st/\gcd(s, r), \gcd(s, r), r')$, with
$r'\neq 0$, or to $X(t, s, 0)$ if $r=0$. If $r\neq 0$ and $\gcd(s,
r)=1$ or $3$, then the existence of a good Eulerian subgraph follows
from Proposition \ref{pro_good_gcd_sr} or from the results in Case
$1$, respectively. Note that $st/\gcd(s, r)\geq 4$, since $t$ is
even and $0<r\neq s-1$. Hence, for $\gcd(s, r)\geq 5$, the existence
of a good Eulerian subgraph follows from Case $3$. Consider $r=0$.
There is no good Eulerian subgraph $W(s, 2, 0)$, because of the
existence of parallel vertical edges. Consider $t\geq 4$. As
remarked, the graph $X(s, t, 0)$ is isomorphic to the graph $X(t, s,
0)$. For $s\geq 5$ the existence of a good Eulerian subgraph $W(t,
s, 0)$ follows from the results in Case $3$. The existence of a good
Eulerian subgraph $W(t, 3, 0)$ follows from Case $1$.

\smallskip

\noindent\textbf{Case 5: $s$ odd, $t$ odd, $t\geq 5$.} A good
Eulerian subgraph $W(s, t, 0)$ can be obtained from the graph $W(3,
3, 0)$ in Figure \ref{fig_430}(a).
If $r\in\{1, 2\}$, then the  existence of a good Eulerian subgraph
follows from Proposition \ref{pro_good_gcd_sr}.  Consider $3\leq
r\leq s-3$ and $s\geq 7$. Since $X(s, t, r)$ is isomorphic to $X(s,
t, s-r)$ and $s$ is odd, we can construct a good Eulerian subgraph
$W(s, t, r)$ for every $s$, $r$ odd, $s\geq 7$, $3\leq r\leq s-4$.
The graph $W(7, 5, 3)$ in Figure \ref{fig_733}(c) can be expanded
horizontally from column $4$ and vertically from row $1$ (or $2$).
It yields a good Eulerian subgraph $W(s, t, 3)$ for every $s$, $t$
odd, $s\geq 7$, $t\geq 5$. Since $s-r+3\geq 7$, we can consider the
graph $W(s-r+3, t, 3)$ arising from $W(7, 5, 3)$ in Figure
\ref{fig_733}(c). We expand horizontally the graph $W(s-r+3, t, 3)$
from column $2$ and obtain a good Eulerian subgraph $W(s, t, r)$ for
every $s$, $t$, $r$ odd, $s\geq 7$, $t\geq 5$ and $3\leq r\leq
s-4$.\end{proof}

\begin{proposition}\label{pro_good_GP}
The graph $X(6m+5, 1, 2)$, $m\geq 0$, has no good Eulerian subgraph.
Consequently, the generalized Petersen graph $G(6m+5, 2)$ has no
Hamiltonian cycle.
\end{proposition}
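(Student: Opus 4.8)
I would first dispose of the ``consequently'' clause: by the proof of Proposition~\ref{pro_good_t=1} the graph $X(6m+5,1,2)=Cir(6m+5;\pm1,\pm2)$ is exactly the quartic graph associated with $G(6m+5,2)$ by contracting the spokes, so by Theorem~\ref{th_eulerian_subgraphs} and Corollary~\ref{cor_eulerian_subgraphs} the absence of a good Eulerian subgraph in $X(6m+5,1,2)$ is equivalent to the non-Hamiltonicity of $G(6m+5,2)$. Thus the whole content is the first sentence. I would prove it directly rather than by quoting Theorem~\ref{th_Alspach}, since the very goal of this section is to recover Alspach's theorem from the good-Eulerian-subgraph machinery.

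Set $n=6m+5$ and write $X=X(6m+5,1,2)$. By Proposition~\ref{pro_red_blu_2factor} the red $2$-factor is the single $n$-cycle on the $\pm1$-edges and the blue $2$-factor is the single $n$-cycle on the $\pm2$-edges (one cycle because $\gcd(n,2)=1$). Suppose, for contradiction, that $X$ has a good Eulerian subgraph $W$, and let $V_2$ be its set of $2$-valent vertices. By Proposition~\ref{pro_admissible} every vertex of $V_2$ carries exactly one red and one blue edge of $W$, so the red part $W_R$ and the blue part $W_B$ are subgraphs of the two $n$-cycles in which each vertex has degree $1$ or $2$, the degree-$1$ vertices being precisely $V_2$. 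Hence $W_R$ is a disjoint union of red arcs and $W_B$ a disjoint union of blue arcs, both having $V_2$ as their common set of endpoints (the extreme case $V_2=\varnothing$ gives $W=X$, whose good tour is just the two factor-cycles, i.e.\ two components, so it is excluded).

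A good Eulerian tour uses color-preserving transitions at the $4$-valent vertices and switches color at the $2$-valent ones; it therefore runs along each arc as a single block and, at every $v\in V_2$, passes from the red arc ending at $v$ to the blue arc ending at $v$. Consequently $W$ determines two fixed-point-free involutions on $V_2$ --- namely $\rho$, pairing the two endpoints of each red arc, and $\beta$, pairing the endpoints of each blue arc --- and the number of components of the tour equals the number of cycles of the $2$-regular graph $\rho\cup\beta$. Thus $W$ is good if and only if $\rho\cup\beta$ is a single cycle, and it suffices to show this is impossible. Along the way one records the necessary conditions that the deleted red edges form a perfect matching of $V_2$ into distance-$1$ pairs and the deleted blue edges a perfect matching into distance-$2$ pairs; already for $n=5$ (where $X=K_5$) these force $\rho=\beta$, so $\rho\cup\beta$ splits into $2$-cycles and $W$ cannot be good, recovering the non-Hamiltonicity of the Petersen graph.

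The remaining, and genuinely hard, step is to show that for every $n\equiv5\pmod 6$ no admissible choice of the distance-$1$ and distance-$2$ matchings makes $\rho\cup\beta$ connected. I would do this by tracking how $\rho$ and $\beta$ displace positions around $\mathbb Z_n$ and extracting a counting invariant modulo $3$ from the lengths of the interleaving step-$1$ and step-$2$ arcs; the congruences $n\equiv1\pmod2$ and $n\equiv2\pmod3$ then obstruct closing the alternating walk into one cycle and force at least two components. This modular bookkeeping on the interaction of the two arc systems is the main obstacle; it is exactly the point where $n\equiv5\pmod6$ is used, and it plays the role that Alspach's lattice diagrams play in the proof of Theorem~\ref{th_Alspach}. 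Once the count is in place, $\rho\cup\beta$ has at least two cycles, so $X(6m+5,1,2)$ has no good Eulerian subgraph, and the ``consequently'' clause follows.
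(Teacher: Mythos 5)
Your reduction of the problem is correct and is genuinely different in flavor from the paper's argument. The ``consequently'' clause is handled the same way (via Theorem~\ref{th_eulerian_subgraphs}), and your structural setup is sound: by Proposition~\ref{pro_red_blu_2factor} both factors of $X(6m+5,1,2)$ are Hamiltonian cycles; by Proposition~\ref{pro_admissible} the $2$-valent vertices $V_2$ carry one edge of each color, so the deleted red (resp.\ blue) edges form a perfect matching of $V_2$ into distance-$1$ (resp.\ distance-$2$) pairs of $\mathbb Z_n$, the case $V_2=\varnothing$ is correctly excluded (the SAW then splits into the two factor cycles), and the tour components are exactly the cycles of the union of the two arc-endpoint involutions $\rho\cup\beta$. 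Your $n=5$ verification is complete and checks out: the unique admissible pair of matchings forces $\rho=\beta$, hence two tour components. This is a more global formulation than the paper's proof, which instead argues locally: a maximal horizontal path in $W$ must consist of exactly two edges, and the forced transitions then produce a short closed component of the tour that is not spanning, a contradiction.

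The genuine gap is the general case $m\geq 1$, which is the entire content of the proposition beyond the Petersen graph itself. You explicitly label it ``the genuinely hard step'' and then only announce a plan: a ``counting invariant modulo $3$'' extracted from the lengths of the interleaving step-$1$ and step-$2$ arcs is never defined, and no argument is given that the congruences $n\equiv 1\pmod 2$ and $n\equiv 2\pmod 3$ actually force $\rho\cup\beta$ to be disconnected. Nothing you have written rules out, for a given large $n\equiv 5\pmod 6$, some clever choice of the two matchings making $\rho\cup\beta$ a single cycle; the mod-$3$ bookkeeping is precisely where all the difficulty of Alspach-type arguments lives, and it is absent. To be fair, the paper's own proof is also only a sketch --- it treats $n=5$ in full and asserts that ``the proof can be generalized'' --- but it at least exhibits the concrete local mechanism (the bound $2j=2$ on horizontal paths and the resulting non-spanning closed component) that is claimed to iterate, whereas your proposal defers the essential combinatorics to an unconstructed invariant. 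As submitted, your text proves the proposition only for $m=0$, and falling back on Theorem~\ref{th_Alspach} is not an option here, since (as you correctly note) the point of this proposition is to recover the non-Hamiltonicity of $G(6m+5,2)$ from the good-Eulerian-subgraph machinery rather than to quote it.
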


\begin{proof} We give a sketch of the proof by showing that $X(5, 1,
2)$ has no good Eulerian subgraph. Suppose, on the contrary, that
$W$ is a good Eulerian subgraph of $X(6m+5, 1, 2)$. Since the unique
horizontal layer of $W$ has an odd number of vertices, the graph $W$
contains at least one path $P_{2j+1}$ consisting of $2j$ horizontal
edges. It is possible to prove that $2j=2$ (if $2j>2$, then $W$ is
not good). Without loss of generality we can set $P_{2j+1}=(x^0_0,
x^0_1, x^0_2)$. Whence $[x^0_3, x^0_4]\in E(W)$ and no other
horizontal edge of $X(5, 1, 2)$ belongs to $E(W)$. Moreover,
$[x^0_1, x^0_3]$,$[x^0_1, x^0_4]$  are edges of $W$, since $W$ is
admissible and $x^0_1$ is $4$-valent in $W$. Whence $[x^0_0,
x^0_2]\in E(W)$
and each admissible tour of $W$ contains the component $A=(x^0_3,
x^0_4$, $x^0_4$, $x^0_3)$. That yields a contradiction, since $A$ is
not a spanning subgraph of $X(6m+5, 1, 2)$. Hence $X(5, 1, 2)$ has
no good Eulerian subgraph. By Theorem \ref{th_eulerian_subgraphs},
the graph $G(5, 2)$ has no Hamiltonian cycle. The proof can be
generalized to the case $G(6m+5, 2)$ with $m>0$.\end{proof}

\section{Proof of the main Theorem; characterization of Hamiltonian
$I$-graphs.}\label{sec_characterization_ham_Igraph}

Now we are ready to prove the main Theorem.


\begin{proof}[Proof of Theorem~{\rm\ref{th_hamiltonian}}] By Theorem
\ref{th_Alspach}, a generalized Petersen graph is Hamiltonian if and
only if it is not isomorphic to $G(6m+5, 2)$, $m\geq 0$. We prove
that a proper $I$-graph is Hamiltonian. By Lemma
\ref{pro_Igraph_special}, a proper $I$-graph $I(n, p, q)$ is special
and its associated quartic graph $X$ is the circulant graph $Cir(n;
p, q)$. By Lemma \ref{pro_circulant2}, the graph $Cir(n; p, q)$ can
be represented as the graph $X(s, t, r)$, where $t=\gcd(n, q)$,
$s=n/t\geq 3$, $r\equiv\pm p(q/t)^{-1}\pmod s$ and $(t, r)$$\neq (2,
0)$ for odd values of $s$. By Lemma
\ref{pro_construction_goodEuleriansubgraph}, the graph $X(s, t, r)$
has a good Eulerian subgraph. The assertion follows from Theorem
\ref{th_eulerian_subgraphs}.\end{proof}


By Theorem \ref{th_eulerian_subgraphs} and Lemma
\ref{pro_construction_goodEuleriansubgraph}, we can extend the
result of Theorem \ref{th_hamiltonian}, about the existence of a
Hamiltonian cycle, to the special generalized $I$-graphs.

As a consequence of Theorem \ref{th_hamiltonian}, a proper $I$-graph
is $3$-edge-colorable or, equivalently, $1$-factorizable (because it
is cubic and Hamiltonian). A widely studied property for
$1$-factorizable graphs is the property of admitting a perfect
$1$-factorization. We recall that a $1$-factorization is perfect if
the union of any pair of distinct $1$-factors is a Hamiltonian
cycle. This property has been investigated for the complete graph:
the complete graph $K_{n}$ has a perfect $1$-factorization when
$n/2$ or $n-1$ is a prime (see \cite{An1} \cite{An2} and \cite{Ko}).
Partial results are also known for generalized Petersen graphs:
$G(n,k)$ admits a perfect $1$-factorization when $(n,k)=(3, 1)$;
$(n,k)=(n,2)$ with $n\equiv 3, 4\pmod 6$; $(n,k)=(9,3)$; $(n,k)=(3d,
d)$ with $d$ odd; $(n,k)=(3d, k)$ with $k>1$, $d$ odd, $3d$ and $k$
coprime (see \cite{BM}). So, it is quite natural to extend the same
problem to proper $I$-graphs.

Some further problems can be considered: the generalization of the
existence of good Eulerian tour to other graph bundles of a cycle
over a cycle, the characterization of Hamiltonian $GI$-graphs or of
Hamilton-laceable $I$-graphs. In \cite{DW}, the authors proved by a
computer search that all bipartite connected $I$-graphs on $2n\leq
200$ vertices are Hamilton-laceable.

\vspace{\baselineskip} \noindent {\bf Acknowledgements:} The authors
would like to thank Arjana \v{Z}itnik for careful reading of various
versions of this paper and for many useful suggestions. Research
supported in part by the ARRS Grants P1-0294, N1-0032, and J1-6720.

\section{Appendix. Proof of Lemma ~{\rm\ref{pro_construction_goodEuleriansubgraph}}}\label{sec_appendix}

\begin{proof}[Case $1$, $t=3$] We expand horizontally the graph $W(3, 3, 0)$ in
Figure \ref{fig_430}(a) from column $0$ and obtain a good Eulerian
subgraph $W(s, 3, 0)$ for every $s$ odd, $s\geq 3$. A good Eulerian
subgraph $W(s, 3, 0)$ with $s$ even can be obtained from the graphs
$W(4, 3, 0)$ and $W(6, 3, 0)$ in  Figure \ref{fig_430}(b)-(c). As an
example, the graph $W(8, 3, 0)$ in Figure \ref{fig_830}(a) has been
obtained by connecting two copies of the graph $W(4, 3, 0)$. The
graph $W(10, 3, 0)$ in Figure \ref{fig_830}(b) has been obtained by
connecting the graphs $W(4, 3, 0)$ and $W(6, 3, 0)$. For $r=1$ the
existence of a good Eulerian subgraph $W(s, 3, 1)$ follows from
Proposition \ref{pro_good_gcd_sr}. Hence we can consider $2\leq
r\leq s/2$, since $X(s, 3, r)$ is isomorphic to $X(s, 3, s-r)$. The
graph $W(4, 3, 2)$ in Figure \ref{fig_830}(c) can be expanded
horizontally from column $3$. It yields a good Eulerian subgraph
$W(s, 3, 2)$ for every $s$ even, $s\geq 4$. Since $s-r+2\geq 4$, we
can consider the graph $W(s-r+2, 3, 2)$ obtained from $W(4, 3, 2)$
in in Figure \ref{fig_830}(c). We expand horizontally $W(s-r+2, 3,
r)$ from column $1$ and obtain a good Eulerian subgraph $W(s, 3, r)$
for every $s$, $r$ even, $s\geq 4$, $2\leq r\leq s/2$. Analogously,
the graphs $W(6, 3, 3)$, $W(8, 3, 3)$ and $W(10, 3, 5)$ in Figure
\ref{fig_633} yield a good Eulerian subgraph $W(s, 3, r)$ for every
$s$ even, $r$ odd, $3\leq r\leq s/2$. More specifically, we expand
horizontally the graph $W(8, 3, 3)$ from column $7$ and obtain a
good Eulerian subgraph $W(s, 3, 3)$ for every even integer $s\geq
8$. The graph $W(10, 3, 5)$ can be expanded horizontally from column
$9$ (or $0$). It yields a good Eulerian subgraph $W(s, 3, 5)$ for
every even integer $s$, $s\geq 10$. Since $s-r+5\geq 10$, we can
consider the graph $W(s-r+5, 3, 5)$ obtained from $W(10, 3, 5)$ in
Figure \ref{fig_633}(c). We expand $W(s-r+5, 3, 5)$ from column $4$
and obtain a good Eulerian subgraph $W(s, 3, r)$ for every $s$ even,
$s\geq 10$, $r$ odd, $5\leq r\leq s/2$.

Consider $s$ odd, $s\geq 5$. The graph $W(5, 3, 2)$ Figure
\ref{fig_532}(a) can be expanded horizontally from column $4$. It
yields a good Eulerian subgraph $W(s, 3, 2)$ for every $s$ odd,
$s\geq 5$. Analogously, the graph $W(9, 3, 4)$ in Figure
\ref{fig_532}(b) yields a good Eulerian subgraph $W(s, 3, 4)$ for
every $s$ odd, $s\geq 9$. The graph $W(13, 3, 6)$ in Figure
\ref{fig_532}(c) can be expanded horizontally from column $2$ and
column $10$. It yields a good Eulerian subgraph $W(2r+1, 3, r)$ with
$r$ even, $6\leq r\geq s/2$. Since $s-2r+1\geq 0$, we can expand
$W(2r+1, 3, r)$ from column $2r$ and find a good Eulerian subgraph
$W(s, 3, r)$ for every $s$ odd, $s\geq 13$, $r$ even, $r\geq 6$. It
remains to construct a good Eulerian subgraph $W(s, 3, r)$ with $s$,
$r$ odd, $s\geq 7$, $3\leq r\leq s/2$. We use he graph $W(7, 3, 3)$
in Figure \ref{fig_733}(a) to construct a good Eulerian subgraph
$W(2r+1, 3, r)$ with $r$ odd, $r\geq 3$. As an example, the graph
$W(11, 3, 5)$ in Figure \ref{fig_733}(b) has been obtained by
expanding horizontally the graph $W(7, 3, 3)$ from column $r=3$ and
$s-1=6$ and by adding new diagonal edges. If we iterate the process,
then we obtain a good Eulerian subgraph $W(2r+1, 3, r)$ with $r$
odd, $r\geq 3$. The graph $W(2r+1, 3, r)$ thus obtained can be
expanded horizontally from column $2r$. It yields a good Eulerian
subgraph $W(s, 3, r)$ for every $s$, $r$ odd, $s\geq 7$, $3\leq
r\leq s/2$.\end{proof}

\begin{figure}[htbp]
\centering
\begin{minipage}{7cm}
\includegraphics[width=7cm]{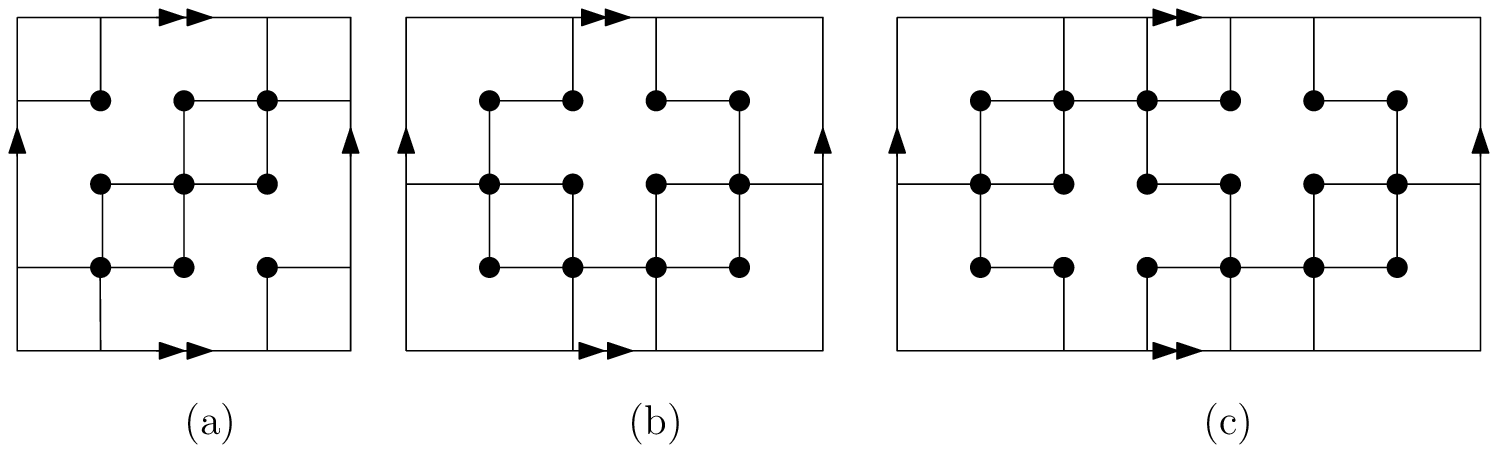}
\end{minipage}
\caption{A good Eulerian subgraph: (a) $W(3, 3, 0)$; (b) $W(4, 3,
0)$; (c) $W(6, 3, 0)$.} \label{fig_430}
\end{figure}

\begin{figure}[htbp]
\centering
\begin{minipage}{11cm}
\includegraphics[width=11cm]{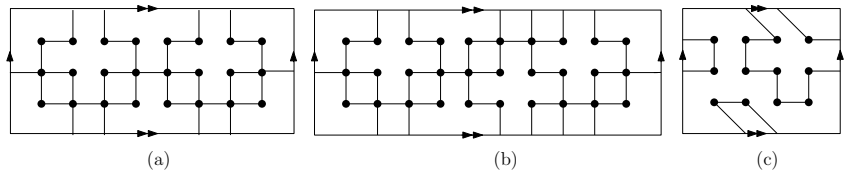}
\end{minipage}
\caption{A good Eulerian subgraph: (a) $W(8, 3, 0)$; (b) $W(10, 3,
0)$; (c) $W(4, 3, 2)$.} \label{fig_830}
\end{figure}

\begin{figure}[htbp]
\centering
\begin{minipage}{12cm}
\includegraphics[width=12cm]{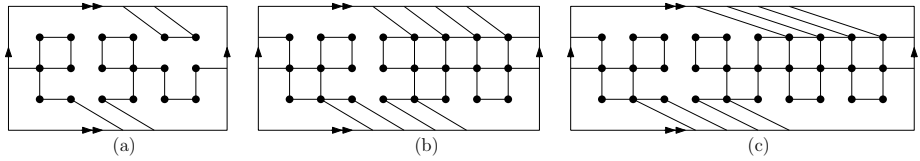}
\end{minipage}
\caption{A good Eulerian subgraph: (a) $W(6, 3, 3)$; (b) $W(8, 3,
3)$; (c) $W(10, 3, 5)$.} \label{fig_633}
\end{figure}

\begin{figure}[htbp]
\centering
\begin{minipage}{8cm}
\includegraphics[width=8cm]{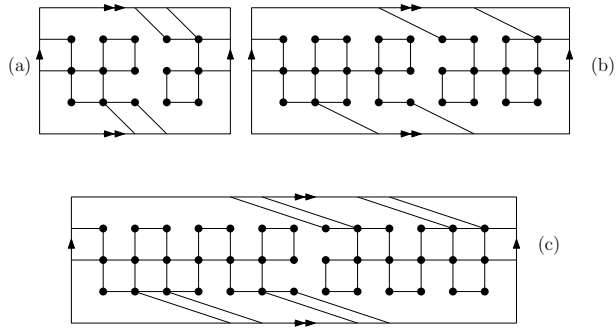}
\end{minipage}
\caption{A good Eulerian subgraph: (a) $W(5, 3, 2)$; (b) $W(9, 3,
4)$; (c) $W(13, 3, 6)$} \label{fig_532}
\end{figure}

\begin{figure}[htbp]
\centering
\begin{minipage}{12cm}
\includegraphics[width=12cm]{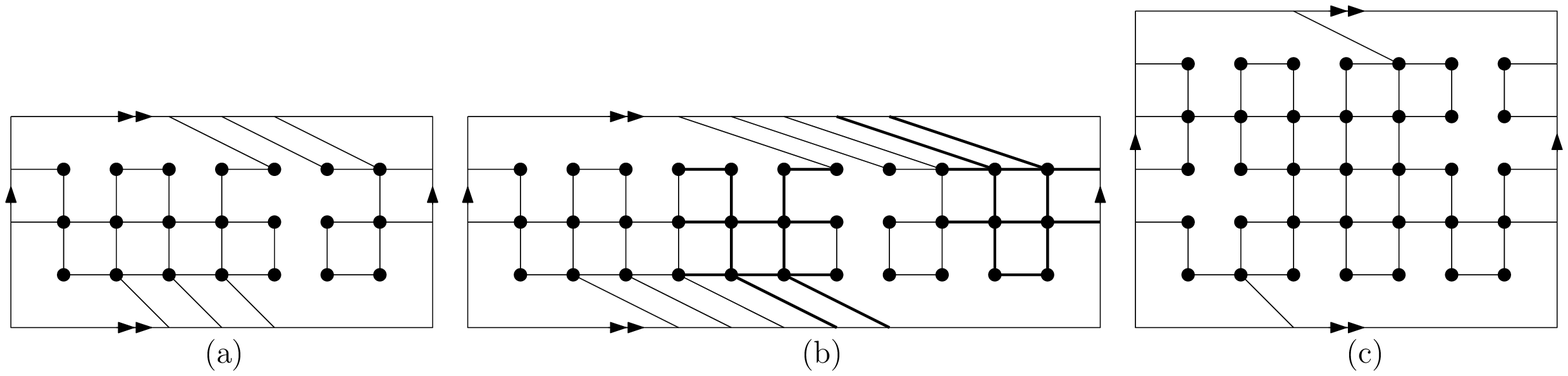}
\end{minipage}
\caption{A good Eulerian subgraph: (a) $W(7, 3, 3)$; (b) $W(11, 3,
5)$; (c) $W(7, 5, 3)$. To obtain the graph $W(11, 3, 5)$ we expanded
horizontally the graph $W(7, 3, 3)$ from column $r=3$ and column
$s-1=6$, then we added new diagonal edges (see the bold edges).}
\label{fig_733}
\end{figure}


\begin{thebibliography}{99}\label{bibliography}

\bibitem{Adam} A. Ad\'am, Research problem 2-10, J. Combin. Theory 2 (1967), 393.


\bibitem{Al} B. Alspach, The classification of Hamiltonian generalized
Petersen graphs, J. Combin. Theory Ser. B 34 (1983), 293--312.

\bibitem{An1} B.A. Anderson, Finite topologies and Hamiltonian path, J. Combin.
Theory Ser. B 14 (1973), 87–-93.

\bibitem{An2} B.A. Anderson, Symmetry groups of some perfect
1-factorizations of complete graphs, Discrete Math. 18 (1977),
227–-234.


\bibitem{BPZ} M. Boben, T. Pisanski, A. $\check{\mbox{Z}}$itnik,
$I$--graphs and the corresponding configurations,
 J. Combin. Des. 13 (2005), 406--424.

\bibitem{BonMur} Bondy, J.A., Murty, U.S.R.: Graph Theory, Springer, 2008.

\bibitem{BM} S. Bonvicini, G. Mazzuoccolo, Perfect
one--factorizations in generalized Petersen graphs, Ars Combin. 99
(2011), 33--43.

\bibitem{BCMS}  I.Z. Bouwer, W.W. Chernoff, B. Monson, Z. Star, The
Foster Census, Charles Babbage Research Centre, Winnipeg, Canada,
1988.


\bibitem{MPZ} M.D.E. Conder, T.  Pisanski, A.  \v{Z}itnik,
GI-graphs: a new class of graphs with many symmetries. J. Algebraic
Combin.  40  (2014),  no. 1, 209--231.


\bibitem{DeFaMa} C. Delorme, O. Favaron, M. Mah\'eo, Isomorphisms of Cayley multigraphs of degree 4
on finite abelian groups, Europ. J. Combin. 13 (1992), 59--61.

\bibitem{DW} M. Dupuis, S. Wagon, Laceable Knights, Ars Math.
Contemp. 9 (2015), 115--124.

\bibitem{ElTu}B. Elspas, J. Turner, Graphs with circulant adjacency matrices, J. Combin. Theory 9
(1970), 297-307.

\bibitem{FaLiLi} C. Fan, D.R. Lick, J. Liu, Pseudo-cartesian product
and hamiltonian decompositions of Cayley graphs on abelian groups,
Discrete Math. 158 (1996), 49--62.

\bibitem{Fl} H. Fleischner, (Some of) the many uses of Eulerian graphs
in graph theory (plus some applications), Discrete Math. 230 (2001),
23--43.

\bibitem{GJT} M.R. Garey, D.S. Johnson, R.E. Tarjan, The planar
Hamiltonian circuit problem is NP--complete, SIAM J. Comput. 5
(1976), 704--714.

\bibitem{HPZ} B. Horvat, T. Pisanski, A. \v{Z}itnik, Isomorphism
checking of $I$--graphs, Graphs Combin., \textbf{28} (2012),
823--830.

\bibitem{Ko} A. Kotzig: Theory of Graphs and Its Applications, (Academic Press,
New York, 1964).

\bibitem{MP00}  D. Maru\v{s}i\v{c}, Dragan, T.  Pisanski, The remarkable generalized Petersen graph $G(8,3)$.
 Math. Slovaca  50  (2000),  no. 2, 117--121.





\bibitem{Mu} M. Muzychuk, Ad\'am's conjecture is true in the square
free case, J. Combin. Theory Ser. 72 (1995), 118--134.

\bibitem{PS13}  T. Pisanski, B.  Servatius,  Configurations from a graphical viewpoint.
Birkhäuser Advanced Texts: Basler Lehrb\"ucher. [Birkhäuser
 Advanced Texts: Basel Textbooks] Birkhäuser/Springer, New York,  2013.


\bibitem{PTZ} T. Pisanski, T. W. Tucker, A.
$\check{\mbox{Z}}$itnik, Straight-ahead walks in Eulerian graphs,
Discrete Math. 281 (2004), 237--246

\bibitem{PV} T. Pisanski, J. Vrabec, Graph bundles, Preprint Series
Dept. Math. Univ. Ljubljana 20 (1982), 213--298.

\bibitem{PSV13}  P. Poto\v{c}nik,  P. Spiga, G.  Verret, Cubic vertex-transitive graphs on up to 1280 vertices.
 J. Symbolic Comput.  50  (2013), 465--477.

\end{thebibliography}
\end{document}